\documentclass[11pt,reqno]{article}
\usepackage[utf8]{inputenc}
\usepackage[english]{babel} 

\usepackage{amsmath}
\usepackage{amssymb}
\usepackage{amsthm}
\usepackage{amscd}

\usepackage{cite}

\usepackage[matrix,arrow,curve]{xy}

\begin{document}
\righthyphenmin=2

\renewcommand{\refname}{References}
\renewcommand{\bibname}{Bibliography}
\renewcommand{\proofname}{Proof}

\newtheorem{lm}{Lemma}
\newtheorem{tm}{Theorem}
\newtheorem*{prop}{Proposition}
\newtheorem*{mtm}{Main Theorem}
\newtheorem*{atm}{Another Presentation}
\newtheorem{cl}{Corollary}
\newtheorem{mcl}{Corollary}
\newtheorem*{cl*}{Corollary}
\theoremstyle{definition}
\newtheorem*{df}{Definition}
\theoremstyle{remark}
\newtheorem*{rk}{Remark}

\newcommand{\Card}{\mathop{\mathrm{Card}}\nolimits}
\newcommand{\Ker}{\mathop{\mathrm{Ker}}\nolimits}
\newcommand{\Cent}{\mathop{\mathrm{Cent}}\nolimits}
\newcommand{\E}{{\mathrm{E}}}
\newcommand{\St}{\mathop{\mathrm{St}}\nolimits}
\newcommand{\Sp}{\mathop{\mathrm{Sp}}\nolimits}
\newcommand{\Ep}{\mathop{\mathrm{Ep}}\nolimits}
\newcommand{\GL}{\mathop{\mathrm{GL}}\nolimits}
\newcommand{\Kt}{\mathop{\mathrm{K_2}}\nolimits}
\newcommand{\Ko}{\mathop{\mathrm{K_1}}\nolimits}
\newcommand{\Ho}{\mathop{\mathrm{H_1}}\nolimits}
\newcommand{\Ht}{\mathop{\mathrm{H_2}}\nolimits}
\newcommand{\epi}{\twoheadrightarrow}
\newcommand{\sgn}{\mathrm{sgn}}
\newcommand{\eps}[1]{\varepsilon_{#1}}
\newcommand{\lan}{\langle}
\newcommand{\ran}{\rangle}
\newcommand{\inv}{^{-1}}
\newcommand{\ur}[1]{\!\,^{(#1)}U_1}
\newcommand{\ps}[1]{\!\,^{(#1)}\!P_1}
\newcommand{\ls}[1]{\!\,^{(#1)}\!L_1}

\renewcommand{\labelenumi}{\theenumi{\rm)}}
\renewcommand{\theenumi}{\alph{enumi}}

\author{Andrei Lavrenov\footnote{The author acknowledges support of the State Financed research task 6.38.191.2014 ``Structure theory, classification, geometry, arithmetic and K-theory of algebraic groups and related structures'' at the St. Petersburg State University, the Chebyshev Laboratory  (Department of Mathematics and Mechanics, St. Petersburg State University)  under RF Government grant 11.G34.31.0026, JSC ``Gazprom Neft'', RFBR project 13-01-00709 ``Study of algebraic groups over rings by localization methods'', RFBR project 13-01-92699 ``Classical algebraic K-theory and algebraic groups'' and the M\"obius Contest Foundation for Young Scientists.}}

\title{Another presentation\\ for symplectic Steinberg groups\footnote{Keywords: K-Theory, symplectic group, Steinberg groups, another presentation, centrality of $\mathrm K_2$; MSC: 19C09.}}

\date{}

\maketitle

\begin{abstract}
We solve a classical problem of centrality of symplectic $\Kt$, namely we show that for an arbitrary commutative ring $R$, $l\geq3$, the symplectic Steinberg group $\St\!\Sp(2l,\,R)$ as an extension of the elementary symplectic group $\Ep(2l,\,R)$ is a central extension. This allows to conclude that the explicit definition of symplectic $\Kt\!\Sp(2l,\,R)$ as a kernel of the above extension, i.e. as a group of non-elementary relations among symplectic transvections, coincides with the usual implicit definition via plus-construction.

We proceed from van der Kallen's classical paper, where he shows an analogous result for linear K-theory. We find a new set of generators for the symplectic Steinberg group and a defining system of relations among them. In this new presentation it is obvious that the symplectic Steinberg group is a central extension.
\end{abstract}

\section*{Introduction}

The main result of the present paper is centrality of $\Kt\!\Sp(2l,\,R)$ in the symplectic Steinberg group over an arbitrary commutative ring $R$ and $l\geq3$.

Centrality of the usual linear $\Kt(n,\,R)$ for $n\geq4$ was proven by van der Kallen in~\cite{vdK} and Tulenbaev in~\cite{Tulelem} for a commutative and an almost commutative ring $R$, respectively. Bak and Tang announced in 1998 that they work on a similar result for unitary groups, but it is a huge project and their proofs have not yet been published. Their results would imply ours for $l\geq5$, however our proof works for $l\geq3$ as well. As shown by Wendt in~\cite{Wendt}, centrality of the symplectic $\Kt$ does not hold for $l=2$.

Centrality of symplectic $\mathrm K_2$ is known at the stable level, i.e., for the limit groups $\Kt\!\Sp(R)=\varinjlim\Kt\!\Sp(2l,\,R)$, or, in fact, once surjective stability of $\mathrm K_2$ starts (see~\cite{MSgrc} and also~\cite{JMb,HOb}). In~\cite{Steinstab} it is shown that surjective stability holds for rings whose stable rank is small enough with respect to the rank $l$ of the root system, for instance, for local rings (see also~\cite{HOb,VP2,SCh,Stepkt}). In~\cite{Stepkt} Stepanov shows in particular that $\mathrm K_2$ is central for a ring $R$ if and only if it is central for the factor-ring $R/\mathrm{Rad}\,R$, where $\mathrm{Rad}\,R$ denotes the Jacobson radical of $R$.

The starting point of our approach is van der Kallen's beautiful idea of ``another presentation'' for Steinberg groups~\cite{vdK}. The large part of our techniques is a symplectic analogue of his results. We find another set of generators for the symplectic Steinberg group and establish a nice behaviour of these new generators under conjugation. After that the claim is immediate. Another important idea used in the proof was suggested by Vavilov and Stepanov in~\cite{StepVav2000}. They notice that the elementary symplectic group can be generated by long-root unipotents. Similarly, the symplectic Steinberg group is generated by long-root unipotents, defined in the present paper. 

The main result of the present paper can be stated as follows.

\begin{mtm}
Let $R$ be an arbitrary commutative ring, $l\geq3$, let $\phi$ denote the natural projection $\phi\colon\St\!\Sp(2l,\,R)\epi\Ep(2l,\,R)$, and let $\Kt\!\Sp(2l,\,R)$ be its kernel $\Ker\phi$. Then one has 
$$\Kt\!\Sp(2l,\,R)\leq\Cent\St\!\Sp(2l,\,R),$$ 
or, in other words, $\phi$ is a central extension.
\end{mtm}

As a consequence one can conclude that for $l\geq4$ the group $\Kt\!\Sp(2l,\,R)$ is the Schur multiplier of the elementary symplectic group, $\Ht(\Ep(2l,\,R),\,\mathbb Z)$. This follows from a standard fact of the theory of central extensions. Namely if $\pi\colon G\epi H$ is a central extension such that $G$ is super-perfect (i.e. $\Ho(G,\,\mathbb Z)=\Ht(G,\,\mathbb Z)=0$), then $\Ht(H, \mathbb Z)=\Ker\pi$. The proof of this fact immediately follows from the Lyndon--Hochschild--Serre spectral sequence (see~\cite{JMb,RSb} for details). It is well-known that various Steinberg groups are super-perfect under appropriate conditions (see~\cite{ABb,AB2,HOb,vdK2,vdKSosm,KMsog,KMusg,L1,L2,JMb,MSgrc,S2,RSb,GThgk}). In particular, Stein shows in~\cite{MSgrc} that $\St\!\Sp(2l,\,R)$ is super-perfect for $l\geq4$. Thus, the Main Theorem implies the following result.

\begin{mcl}
In notation of the Main Theorem assume in addition that $l\geq4$. Then one has 
$$
\Kt\!\Sp(2l,\,R)=\Ht(\Ep(2l,\,R),\,\mathbb Z).
$$
\end{mcl}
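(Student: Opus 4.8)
The plan is to obtain the corollary as a formal consequence of the Main Theorem together with the super-perfectness of the symplectic Steinberg group, by means of the homological fact recalled just above. Two inputs are needed. The first is the Main Theorem itself, which asserts that $\phi\colon\St\!\Sp(2l,\,R)\epi\Ep(2l,\,R)$ is a central extension with kernel $\Kt\!\Sp(2l,\,R)$. The second is Stein's theorem from~\cite{MSgrc}: for $l\geq4$ the group $\St\!\Sp(2l,\,R)$ is super-perfect, that is, $\Ho(\St\!\Sp(2l,\,R),\,\mathbb Z)=\Ht(\St\!\Sp(2l,\,R),\,\mathbb Z)=0$. The restriction $l\geq4$ in the hypothesis enters solely through this second input; centrality alone is available already for $l\geq3$.

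First I would write down the Lyndon--Hochschild--Serre spectral sequence of the central extension,
$$E^2_{pq}=\mathrm H_p\bigl(\Ep(2l,\,R),\,\mathrm H_q(\Kt\!\Sp(2l,\,R),\,\mathbb Z)\bigr)\Longrightarrow\mathrm H_{p+q}(\St\!\Sp(2l,\,R),\,\mathbb Z),$$
and read off its five-term exact sequence in low degrees. Since the extension is central, $\Ep(2l,\,R)$ acts trivially on the abelian kernel, so the term $\mathrm H_0\bigl(\Ep(2l,\,R),\,\mathrm H_1(\Kt\!\Sp(2l,\,R),\,\mathbb Z)\bigr)$ appearing in the sequence reduces to $\Kt\!\Sp(2l,\,R)$ itself; the five-term sequence then takes the form
$$\Ht(\St\!\Sp,\,\mathbb Z)\to\Ht(\Ep,\,\mathbb Z)\xrightarrow{\;d\;}\Kt\!\Sp\to\Ho(\St\!\Sp,\,\mathbb Z)\to\Ho(\Ep,\,\mathbb Z)\to0.$$

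Next I would feed the super-perfectness of $\St\!\Sp(2l,\,R)$ into this sequence. Vanishing of $\Ho(\St\!\Sp,\,\mathbb Z)$ forces $\Ho(\Ep,\,\mathbb Z)=0$, so that $\Ep(2l,\,R)$ is perfect, and by exactness at $\Kt\!\Sp$ it makes the connecting map $d$ surjective; vanishing of $\Ht(\St\!\Sp,\,\mathbb Z)$ makes $d$ injective by exactness at $\Ht(\Ep,\,\mathbb Z)$. Hence $d$ is an isomorphism $\Ht(\Ep(2l,\,R),\,\mathbb Z)\xrightarrow{\;\sim\;}\Kt\!\Sp(2l,\,R)$, which is precisely the asserted equality.

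There is essentially no obstacle internal to the corollary: every step is a formal manipulation once the two inputs are secured. The whole weight of the argument rests on the Main Theorem, whose proof constitutes the body of the paper, and on Stein's computation. The only point that demands a moment's care is to verify that Stein's super-perfectness result is stated over an arbitrary commutative ring and for $l\geq4$, so that it may legitimately be combined with the centrality furnished by the Main Theorem.
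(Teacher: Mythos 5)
Your proof is correct and takes essentially the same route as the paper: the paper also derives the corollary by combining the Main Theorem with Stein's result that $\St\!\Sp(2l,\,R)$ is super-perfect for $l\geq4$, invoking the standard fact that a central extension $\pi\colon G\epi H$ with $G$ super-perfect satisfies $\Ht(H,\,\mathbb Z)=\Ker\pi$, proved via the Lyndon--Hochschild--Serre spectral sequence. Your explicit five-term exact sequence argument is simply the spelled-out version of that standard fact.
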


The above corollary can be generalised to the case $l=3$. More precisely, consider a ``universal covering'' of $\St\!\Sp(6,\,R)$, i.e. a central extension $\pi\colon U\epi\St\!\Sp(6,\,R)$ such that $U$ is super-perfect (it exists since $\St\!\Sp(6,\,R)$ is perfect, see~\cite{JMb,RSb} for details). Then $\phi\pi\colon U\epi\Ep(6,\,R)$ is itself a central extension, so that we have the following diagram
$$
\xymatrix{
1\ar@{->}[r]&\Ht(\St\!\Sp(6,\,R),\mathbb Z)\ar@{->}[r]\ar@{->}[d]&U\ar@{->}^{\pi\qquad\!}[r]\ar@{=}[d]&\St\!\Sp(6,\,R)\ar@{->}[r]\ar@{->}^\phi[d]&1\\
1\ar@{->}[r]&\Ht(\Ep(6,\,R),\mathbb Z)\ar@{->}[r]&U\ar@{->}[r]&\Ep(6,\,R)\ar@{->}[r]&1
}
$$
with exact rows. Now, the snake lemma provides the exact sequence
$$
\xymatrix{
1\ar@{->}[r]&{\Ht(\St\!\Sp(6,\,R),\mathbb Z)}\ar@{->}[r]&{\Ht(\Ep(6,\,R),\mathbb Z)}\ar@{->}[r]&{\Kt\!\Sp(6,\,R)}\ar@{->}[r]&1.
}
$$
Since $\Ht(\St\!\Sp(6,\,R),\mathbb Z)$ is computed in~\cite{S2,vdKSosm} for an arbitrary $R$, one can now compare $\Kt\!\Sp(2l,\,R)$ and $\Ht(\Ep(2l,\,R),\,\mathbb Z)$ for $l=3$. Using results of~\cite{vdKSosm} one obtains the following generalisation of the previous corollary.

\begin{mcl}
In notation of the Main Theorem there is a surjective map from $\Ht(\Ep(2l,\,R),\,\mathbb Z)$ onto $\Kt\!\Sp(2l,\,R)$ with the kernel $\Ht(\St\!\Sp(2l,\,R),\mathbb Z)$. In particular, this map is bijective for $l\geq4$ or for $l=3$ provided that $R$ has no residue field isomorphic to $\mathbb F_2$.
\end{mcl}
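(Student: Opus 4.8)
The plan is to run a single universal-cover argument valid for every $l\geq3$ and then read off the two cases of the bijectivity clause from a vanishing statement about the Schur multiplier of the Steinberg group. Since $\St\!\Sp(2l,\,R)$ is perfect, it admits a universal central extension $\pi\colon U\epi\St\!\Sp(2l,\,R)$ with $U$ super-perfect and $\Ker\pi=\Ht(\St\!\Sp(2l,\,R),\,\mathbb Z)$, as recalled above. By the Main Theorem the projection $\phi$ is central, so I would form the composite $\phi\pi\colon U\epi\Ep(2l,\,R)$ and aim to show it is the universal central extension of $\Ep(2l,\,R)$; its kernel is then $\Ht(\Ep(2l,\,R),\,\mathbb Z)$. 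Placing the two extensions side by side gives a ladder with common middle term $U$, identity in the middle and $\phi$ on the right, and the snake lemma yields
$$1\to\Ht(\St\!\Sp(2l,\,R),\,\mathbb Z)\to\Ht(\Ep(2l,\,R),\,\mathbb Z)\to\Kt\!\Sp(2l,\,R)\to1,$$
exactly as displayed for $l=3$ in the discussion above. This produces the asserted surjection with kernel $\Ht(\St\!\Sp(2l,\,R),\,\mathbb Z)$.

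The step I expect to be the main obstacle is verifying that the composite $\phi\pi$ is itself a central extension, since centrality of $\pi$ and of $\phi$ separately does not guarantee it formally. The standard remedy exploits that $U$ is perfect: writing $K=\Ker(\phi\pi)$, for a fixed $x\in K$ the map $u\mapsto[x,u]$ lands in the central abelian subgroup $\Ker\pi\subseteq Z(U)$, and because $\Ker\pi$ is central this map is a group homomorphism $U\to\Ker\pi$. As $U$ is perfect it factors through the trivial abelianisation, so $[x,u]=1$ for all $u$, i.e. $x\in Z(U)$. Hence $\phi\pi$ is central, and super-perfectness of $U$ upgrades it to the universal central extension of $\Ep(2l,\,R)$, identifying $\Ker(\phi\pi)$ with $\Ht(\Ep(2l,\,R),\,\mathbb Z)$ and justifying the bottom row of the ladder.

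It remains to settle bijectivity, which by the snake sequence is equivalent to the vanishing $\Ht(\St\!\Sp(2l,\,R),\,\mathbb Z)=0$. For $l\geq4$ I would invoke Stein's theorem from~\cite{MSgrc} that $\St\!\Sp(2l,\,R)$ is super-perfect, so this Schur multiplier is zero and the surjection is an isomorphism; here the universal cover degenerates to $U=\St\!\Sp(2l,\,R)$ itself. For $l=3$ I would quote the explicit computation of $\Ht(\St\!\Sp(6,\,R),\,\mathbb Z)$ from~\cite{vdKSosm} (cf.~\cite{S2}), which shows that this group vanishes precisely when $R$ has no residue field isomorphic to $\mathbb F_2$, exactly the stated hypothesis. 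Feeding either vanishing into the exact sequence collapses the surjection to a bijection, completing the proof; beyond the centrality argument above, the remaining work is the bookkeeping of assembling the ladder and applying the snake lemma together with the external multiplier computation.
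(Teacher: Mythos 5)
Your proposal is correct and follows essentially the same route as the paper: take the universal cover $\pi\colon U\epi\St\!\Sp(2l,\,R)$, observe that $\phi\pi\colon U\epi\Ep(2l,\,R)$ is a central extension, apply the snake lemma to the resulting ladder to get $1\to\Ht(\St\!\Sp(2l,\,R),\mathbb Z)\to\Ht(\Ep(2l,\,R),\mathbb Z)\to\Kt\!\Sp(2l,\,R)\to1$, and then quote Stein's super-perfectness result from~\cite{MSgrc} for $l\geq4$ and the computation of $\Ht(\St\!\Sp(6,\,R),\mathbb Z)$ from~\cite{vdKSosm} for $l=3$. The only difference is that you spell out the standard perfectness/commutator argument showing the composite $\phi\pi$ is central, a step the paper asserts without proof; this is a welcome (and correct) filling of a detail rather than a different approach.
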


The above corollaries establish coincidence of the classical definition of $\Kt\!\Sp$ as above with Quillen's symplectic K-theory. Recall that given a connected based CW-complex $X$ and a perfect subgroup $E$ in $\pi_1(X)=G$ one can construct a new CW-complex $X^+$ by attaching 2-cells and 3-cells such that $\pi_1(X)\rightarrow\pi_1(X^+)$ is the natural projection $G\epi G/E$ and induced maps on homologies are isomorphisms. Quillen used this construction to define higher K-groups. The same construction can be applied to define K-theory of Chevalley groups as well, see~\cite{SCh}. In particular, taking $G=\Sp(2l,\,R)$, $E=\Ep(2l,\,R)$ and $X=\mathrm BG$ one obtains the definition of Quillen's symplectic K-theory
$$
\mathrm K_i\!\Sp^Q(2l,\,R)=\pi_i(\mathrm BG^+).
$$
Such a definition automatically implies that Quillen's $\mathrm K_1\!\Sp^Q(2l,\,R)$ coincides with the classical one $\mathrm K_1\!\Sp(2l,\,R)=\Sp(2l,\,R)/\Ep(2l,\,R)$ (which, as a matter of fact, coincides with Volodin's one). One can show that $\mathrm BE^+$ is homotopy equivalent to the universal covering of $\mathrm BG^+$ (see~\cite{SCh}). Thus, $\mathrm K_2\!\Sp^Q(2l,\,R)=\pi_2(\mathrm BG^+)=\pi_2(\mathrm BE^+)$, so that it is equal to $\Ht(\mathrm BE^+)$ by Hurewicz theorem. Since the plus-construction preserves homologies, and homologies of $\mathrm BE$ are equal to homologies of $E$, one can conclude that $\mathrm K_2\!\Sp^Q(2l,\,R)=\Ht(E,\mathbb Z)$. In other words, the Main Theorem of the present paper allows to compare definitions of $\mathrm K_2$ via Steinberg groups and via the plus-construction.

\begin{mcl}
In notation of the Main Theorem there is a surjective map from $\mathrm K_2\!\Sp^Q(2l,\,R)$ onto $\Kt\!\Sp(2l,\,R)$ with the kernel $\Ht(\St\!\Sp(2l,\,R),\mathbb Z)$. In particular, this map is bijective for $l\geq4$ or for $l=3$ provided that $R$ has no residue field isomorphic to $\mathbb F_2$.
\end{mcl}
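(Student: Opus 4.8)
The plan is to reduce the statement to the previous corollary by first identifying Quillen's group $\mathrm K_2\!\Sp^Q(2l,\,R)$ with the Schur multiplier $\Ht(\Ep(2l,\,R),\,\mathbb Z)$. Once this identification is established, the desired surjection, together with its kernel and the bijectivity range, is exactly the content of the corollary immediately preceding this one, so nothing further needs to be proved.

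For the identification I would make precise the homotopy-theoretic computation already sketched in the paragraph above. Put $G=\Sp(2l,\,R)$ and $E=\Ep(2l,\,R)$. By definition $\mathrm K_2\!\Sp^Q(2l,\,R)=\pi_2(\mathrm BG^+)$. The subgroup $E$ is the correct perfect subgroup to kill, since $\Ep(2l,\,R)$ is perfect and normal in $\Sp(2l,\,R)$; consequently, by~\cite{SCh}, the space $\mathrm BE^+$ is homotopy equivalent to the universal covering of $\mathrm BG^+$, and the covering map induces an isomorphism $\pi_2(\mathrm BG^+)\cong\pi_2(\mathrm BE^+)$. As $\mathrm BE^+$ is simply connected, the Hurewicz theorem gives $\pi_2(\mathrm BE^+)\cong\Ht(\mathrm BE^+,\,\mathbb Z)$, and since the plus-construction preserves integral homology and $\Ht(\mathrm BE,\,\mathbb Z)=\Ht(E,\,\mathbb Z)$, I obtain $\mathrm K_2\!\Sp^Q(2l,\,R)\cong\Ht(\Ep(2l,\,R),\,\mathbb Z)$.

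It then remains only to transport this isomorphism through the previous corollary, which supplies a surjection $\Ht(\Ep(2l,\,R),\,\mathbb Z)\epi\Kt\!\Sp(2l,\,R)$ with kernel $\Ht(\St\!\Sp(2l,\,R),\mathbb Z)$, bijective for $l\geq4$ and for $l=3$ when $R$ has no residue field isomorphic to $\mathbb F_2$. Precomposing with the isomorphism of the previous paragraph yields the asserted map and kernel and preserves the bijectivity range. The main point requiring care is not any computation but the citation to~\cite{SCh}: I would want to confirm that the two standard properties of the $+$-construction used here—that $\mathrm BE^+$ realises the universal cover of $\mathrm BG^+$ and that the $+$-construction is a homology equivalence—are indeed available in this symplectic setting with $E=\Ep(2l,\,R)$. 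Granting these, the corollary is a purely formal consequence of its predecessor.
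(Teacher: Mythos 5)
Your proposal is correct and follows the paper's own route exactly: the paper likewise identifies $\mathrm K_2\!\Sp^Q(2l,\,R)$ with $\Ht(\Ep(2l,\,R),\,\mathbb Z)$ via the cited properties of the plus-construction ($\mathrm BE^+$ as universal cover of $\mathrm BG^+$, Hurewicz, homology invariance) and then invokes the preceding corollary for the surjection, its kernel, and the bijectivity range. Nothing is missing.
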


The second major result of the present paper is another presentation for the symplectic Steinberg group. As above, it is a symplectic analogue of van~der~Kallen's result for the linear case. More precisely, we have the following theorem.

\begin{atm}
In notation of the Main Theorem the symplectic Steinberg group $\St\!\Sp(2l,\,R)$ can be defined by the set of generators
\begin{multline*}
\big\{X(u,\,v,\,a)\,\big| u,\,v\in V,\ \text{$u$ is a column of}\\ \text{a symplectic elementary matrix},\ \lan u,\,v\ran=0,\ a\in R\big\}
\end{multline*}
and relations
\setcounter{equation}{0}
\renewcommand{\theequation}{P\arabic{equation}}
\begin{align}
&X(u,\,v,\,a)X(u,\,w,\,b)=X(u,\,v+w,\,a+b+\lan v,\,w\ran),\\
&\begin{aligned}X(u,\,va,\,0)=X(v,\,ua,\,0)\,\text{ where}&\text{ $v$ is also a column}\\ &\text{of a symplectic elementary matrix,}\end{aligned}\\
&\begin{aligned}X(u',\,v',\,b)X(u,\,v,\,a)X(u',\,v',\,b&)\inv=\\&=X(T(u',\,v',\,b)u,\,T(u',\,v',\,b)v,\,a),\end{aligned}
\end{align}
where $T(u,\,v,\,a)$ is an ESD-transformation $$w\mapsto w+u(\lan v,\,w\ran+a\lan u,\,w\ran)+v\lan u,\,w\ran.$$ For the usual generators of the symplectic Steinberg group the following identities hold 
$$
X_{ij}(a)=X(e_i,\,e_{-j}a\eps{-j},\,0)\ \text{for $j\neq-i$},\qquad X_{i,-i}(a)=X(e_i,\,0,\,a),
$$
and, moreover, $\phi$ sends $X(u,\,v,\,a)$ to $T(u,\,v,\,a)$. Furthermore, the following relations are automatically satisfied
\begin{align}
&X(u,\,ua,\,0)=X(u,\,0,\,2a),\\
&X(ub,\,0,\,a)=X(u,\,0,\,ab^2),\\
&X(u+v,\,0,\,a)=X(u,\,0,\,a)X(v,\,0,\,a)X(v,\,ua,\,0)\,\text{ for }\lan u,\,v\ran=0,
\end{align}
whenever $ub$ in {\rm P5} or $v$ and $u+v$ in {\rm P6} are also columns of symplectic elementary matrices.
\end{atm}

Formally, the first theorem follows from the second one, but the proof of the above presentation crucially depends on all intermediate results, necessary for the Main Theorem.

I would like to express my gratitude to Nikolai Vavilov, who supervised this work and also suggested numerous improvements in the exposition. I would also like to thank Alexei Stepanov for helpful discussions, Sergey Sinchuk, who partially motivated me to start this research, and Alexander Luzgarev, Andrei Smolensky and Alexander Shchegolev for their careful reading of the drafts of this paper.

\section{Notations}

In the sequel $R$ denotes an arbitrary associative commutative unital ring, $V=R^{2l}$ denotes a free right $R$-module with basis numbered $e_{-l}$, $\ldots$, $e_{-1}$, $e_1$, $\ldots$, $e_l$, $l\geq3$. For the vector $v\in V$ its $i$-th coordinate will be denoted by $v_i$, i.e. $v=\sum_{i=-l}^le_iv_i$. By $\lan\ ,\ \ran$ we denote the standard symplectic form on $V$, i.e $\lan e_i,\,e_j\ran=\sgn(i)\delta_{i,-j}$. We will usually write $\eps i$ instead of $\sgn(i)$. Observe that $\lan u,\,u\ran=0$ for any $u\in V$.

\begin{df}
Define the {\it symplectic group} $\Sp(V)=\Sp(2l,\,R)$ as the group of automorphisms of $V$ preserving the symplectic form $\langle\ ,\ \rangle$,
$$\Sp(V)=\{f\in\GL(V)\mid\lan f(u),\,f(v)\ran=\lan u,\,v\ran\ \ \forall\,u,\,v\in V\}.$$
\end{df}

\begin{df}[Eichler--Siegel--Dickson transformations]
For $a\in R$ and $u$, $v\in V$, $\lan u,\,v\ran=0$, denote by $T(u,\,v,\,a)$ the automorphism of $V$ s.t. for $w\in V$ one has
$$T(u,\,v,\,a)\colon w\mapsto w+u(\lan v,\,w\ran+a\lan u,\,w\ran)+v\lan u,\,w\ran.$$
We refer to the elements $T(u,\,v,\,a)$ as the ({\it symplectic}) {\it ESD-trans\-for\-ma\-tions}.
\end{df}

This definition follows Petrov's paper~\cite{Petoug}. It simultaneously generalises short-root unipotents $T_{u,v}(a)=T(u,va,0)$ and long-root unipotents $T_u(a)=T(u,0,a)$. In general, an ESD-transformation is a product of a long-root unipotent and a short-root one, $T(u,\,v,\,a)=T_u(a)\,T_{u,v}(1)$.

The following properties of ESD-transformations can be verified by a straightforward computation.

\begin{lm}
\label{esd-properties}
Let $u$, $v$, $w\in V$ be three vectors such that $\lan u,\,v\ran=0$, $\lan u,\,w\ran=0$, and let $a$, $b\in R$. Then
\begin{enumerate}
\item $T(u,\,v,\,a)\in\Sp(V)$,
\item $T(u,\,v,\,a)\,T(u,\,w,\,b)=T(u,\,v+w,\,a+b+\lan v,\,w\ran)$,
\item $T(u,\,va,\,0)=T(v,\,ua,\,0)$,
\item $g\,T(u,\,v,\,a)g\inv=T(gu,\,gv,\,a)\ \ \forall\,g\in\Sp(V)$.
\end{enumerate}
\end{lm}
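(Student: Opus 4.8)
The plan is to verify all four statements by direct substitution into the defining formula $T(u,v,a)\colon w\mapsto w+u(\lan v,w\ran+a\lan u,w\ran)+v\lan u,w\ran$, using only that $\lan\,,\,\ran$ is $R$-bilinear and alternating (so $\lan x,x\ran=0$ and $\lan x,y\ran=-\lan y,x\ran$ for all $x$, $y$), that $R$ is commutative, and the standing hypotheses $\lan u,v\ran=\lan u,w\ran=0$. I would first check that each transformation on a right-hand side is legitimately defined: $\lan u,v+w\ran=\lan u,v\ran+\lan u,w\ran=0$ for (b), $\lan u,va\ran=\lan v,ua\ran=0$ for (c), and $\lan gu,gv\ran=\lan u,v\ran=0$ for (d). Then I would prove (b), (c), (d) as self-contained computations and assemble (a) from them.

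For (b) I would apply $T(u,v,a)$ to the vector $z'=T(u,w,b)z$. The fact that collapses the computation is the hypothesis $\lan u,w\ran=0$, which gives $\lan u,z'\ran=\lan u,z\ran$ and $\lan v,z'\ran=\lan v,z\ran+\lan v,w\ran\lan u,z\ran$ (here $\lan u,u\ran=0$ and $\lan v,u\ran=0$ also enter). Collecting the coefficients of $u$, $v$, $w$ and $z$ reproduces exactly $T(u,v+w,a+b+\lan v,w\ran)z$, the scalar $\lan v,w\ran$ arising from the one surviving cross-term. Property (d) is shorter: writing the conjugate applied to $w$ as $g\,T(u,v,a)(g\inv w)$, pulling $g$ through by linearity, and invoking $g$-invariance of the form in the guise $\lan v,g\inv w\ran=\lan gv,w\ran$ and $\lan u,g\inv w\ran=\lan gu,w\ran$ turns the output into $T(gu,gv,a)w$ directly.

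Property (c) is pure bilinearity: both $T(u,va,0)z$ and $T(v,ua,0)z$ expand to $z+u\lan v,z\ran a+v\lan u,z\ran a$ once $a$ is moved across using commutativity of $R$, so the two sides coincide. For (a) I would expand $\lan T(u,v,a)z,\,T(u,v,a)z'\ran$, discard the terms carrying the vanishing factors $\lan u,u\ran$, $\lan v,v\ran$, $\lan u,v\ran$, and note that the remaining cross-terms cancel in pairs by antisymmetry: the coefficient of $a$ vanishes because $\lan z,u\ran+\lan u,z\ran=0$, and the two other pairs cancel the same way, leaving $\lan z,z'\ran$. The only step that is not a single expansion is invertibility, which I would obtain from (b): it gives $T(u,v,a)\,T(u,-v,-a)=T(u,0,0)=\mathrm{id}$, so $T(u,v,a)\in\GL(V)$, and preservation of the form then places it in $\Sp(V)$. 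The whole lemma is routine bookkeeping; the only things to watch are which inner products vanish, the right-module scalar conventions, and the fact that membership in $\Sp(V)$ needs invertibility on top of form-preservation.
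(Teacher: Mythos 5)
Your proposal is correct and is exactly what the paper intends: the paper gives no written proof, stating only that these properties "can be verified by a straightforward computation," and your verification carries that out in full detail (including the well-definedness checks and the point that membership in $\Sp(V)$ requires invertibility, obtained from (b) via $T(u,\,v,\,a)\inv=T(u,\,-v,\,-a)$, on top of form preservation).
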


\begin{rk}
Observe that $T(u,\,0,\,0)=1$ and $T(u,\,v,\,a)\inv=T(u,\,-v,\,-a)$.
\end{rk}

In the following particular case we have a simple commutator formula for ESD-transformations. In the present paper all commutators are left-normed, $[x,\,y]=xyx\inv y\inv$, we denote $xyx\inv$ by $\!\,^xy$.

\begin{lm}
\label{z-decomposition}
For $u$, $v\in V$ such that $u_i=u_{-i}=v_i=v_{-i}=0$, $\lan u,\,v\ran=0$, and $a\in R$ one has 
$$
[T(e_i,\,u,\,0),\,T(e_{-i},\,v,\,a)]=T(u,\,v\eps{i},\,a)T(e_{-i},\,-ua\eps{-i},\,0).
$$
\end{lm}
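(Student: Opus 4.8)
The plan is to verify the identity as an equality of two automorphisms of $V$, by evaluating both sides on an arbitrary vector $w\in V$ and comparing the results; the hypotheses $u_i=u_{-i}=v_i=v_{-i}=0$ and $\lan u,\,v\ran=0$ are exactly what is needed to annihilate almost all of the inner products that arise. Throughout I abbreviate $g=T(e_i,\,u,\,0)$ and $h=T(e_{-i},\,v,\,a)$, and I record the basic evaluations $\lan e_i,\,w\ran=\eps{i}w_{-i}$, $\lan e_{-i},\,w\ran=\eps{-i}w_i$, together with $\lan u,\,e_{\pm i}\ran=\lan v,\,e_{\pm i}\ran=0$ and $\lan u,\,v\ran=0$, which follow immediately from the coordinate assumptions.

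First I would reduce the commutator using the conjugation rule. Writing $[g,\,h]=(ghg\inv)\,h\inv$ and applying Lemma~\ref{esd-properties}(4) gives $ghg\inv=T(ge_{-i},\,gv,\,a)$. A short computation with the inner products above yields $gv=v$ (since $\lan u,\,v\ran=\lan e_i,\,v\ran=0$) and $ge_{-i}=e_{-i}+\eps{i}u$ (since $\lan u,\,e_{-i}\ran=0$ and $\lan e_i,\,e_{-i}\ran=\eps{i}$). Combined with the Remark's formula $h\inv=T(e_{-i},\,-v,\,-a)$, the left-hand side becomes $T(e_{-i}+\eps{i}u,\,v,\,a)\,T(e_{-i},\,-v,\,-a)$, and one checks that $\lan e_{-i}+\eps{i}u,\,v\ran=0$, so this product of ESD-transformations is legitimate.

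Next I would simply evaluate both sides on $w$. Setting $\alpha=\lan u,\,w\ran$ and $\beta=\lan v,\,w\ran$ and pushing $w$ through the two factors of the left-hand side (first $T(e_{-i},\,-v,\,-a)$, then $T(e_{-i}+\eps{i}u,\,v,\,a)$), all the mixed pairings collapse and the answer organises itself as a correction of $w$ along the three directions $e_{-i}$, $u$, $v$. Doing the same for the right-hand side $T(u,\,v\eps{i},\,a)\,T(e_{-i},\,-ua\eps{-i},\,0)$ produces a correction along the same three directions, and a term-by-term comparison of the coefficients shows the two expressions coincide.

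The main point to get right is the role of the extra short-root factor $T(e_{-i},\,-ua\eps{-i},\,0)$ on the right. It is precisely this factor that accounts for the $a$-dependent contribution along $u$ and $e_{-i}$ created by conjugating $h$ by $g$, i.e. by the replacement $e_{-i}\mapsto e_{-i}+\eps{i}u$ inside the long-root parameter $a\lan e_{-i}+\eps{i}u,\,\cdot\ran$. Verifying that the coefficient of $u$ (which carries the $\eps{i}\beta$, the $a\alpha$ and the $-aw_i$ contributions at once) matches on both sides is the only step where the signs $\eps{i}$, $\eps{-i}=-\eps{i}$ and the identity $\eps{i}^2=1$ must be tracked carefully; everything else falls out at once from the orthogonality hypotheses.
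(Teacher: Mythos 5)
Your proposal is correct: I checked the pointwise computation, and after the reduction both sides act on $w$ as $w\mapsto w+e_{-i}\,a\eps{i}\lan u,w\ran+u\bigl(\eps{i}\lan v,w\ran+a\lan u,w\ran-aw_i\bigr)+v\,\eps{i}\lan u,w\ran$, so the term-by-term comparison you describe does close. But your route differs from the paper's in its second half. The paper also opens with the conjugation formula of Lemma~\ref{esd-properties}~{\it d}), though applied to the other factor: it writes $[g,\,h]=g\cdot{}^{h}(g\inv)$ and computes ${}^{h}(g\inv)=T(e_i+e_{-i}a\eps{-i}+v\eps{-i},\,-u,\,0)$, whereas you write $[g,\,h]={}^{g}h\cdot h\inv$ and compute ${}^{g}h=T(e_{-i}+\eps{i}u,\,v,\,a)$. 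From that point on, however, the paper never evaluates on a general vector: it uses the switch property {\it c}), $T(u,\,va,\,0)=T(v,\,ua,\,0)$, to rewrite both factors with $u$ in the first slot, merges them by the addition property {\it b}) into $T(u,\,e_{-i}a\eps{i}+v\eps{i},\,a)$, and then re-splits by {\it b}) and {\it c}) to land exactly on $T(u,\,v\eps{i},\,a)T(e_{-i},\,-ua\eps{-i},\,0)$; the only vector evaluations are of $T(e_{-i},\,v,\,a)$ on $e_i$ and on $u$. Your approach instead finishes by brute-force evaluation on an arbitrary $w$, which is more elementary and self-contained (it needs little beyond the definition of an ESD-transformation), but at the cost of tracking corrections along three directions and all the signs simultaneously---precisely the bookkeeping that the calculus of {\it b}) and {\it c}) is designed to absorb, and which matters later in the paper where analogous identities are manipulated inside the Steinberg group, where pointwise evaluation is unavailable. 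Both arguments are complete proofs of the lemma.
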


\begin{proof}
The proof is a direct computation using properties of ESD-trans\-vec\-tions stated above. Firstly, using the conjugation formula one can rewrite the commutator in the following way:
\begin{align*}
&[T(e_i,\,u,\,0),\,T(e_{-i},\,v,\,a)]=T(e_i,\,u,\,0)\cdot\,^{T(e_{-i},\,v,\,a)}T(e_i,\,-u,\,0)=\\
&=T(e_i,\,u,\,0)T(T(e_{-i},\,v,\,a)e_i,\,-T(e_{-i},\,v,\,a)u,\,0)=\\
&=T(e_i,\,u,\,0)T(e_i+e_{-i}a\eps{-i}+v\eps{-i},\,-u,\,0).
\end{align*}
Now, using {\it b}) and {\it c}) of Lemma~\ref{esd-properties} one gets
\begin{align*}
&T(e_i,\,u,\,0)T(e_i+e_{-i}a\eps{-i}+v\eps{-i},\,-u,\,0)=\\
&=T(u,\,e_i,\,0)T(u,\,-e_i+e_{-i}a\eps i+v\eps{i},\,0)=T(u,\,e_{-i}a\eps i+v\eps{i},\,a)=\\&=T(u,\,v\eps{i},\,a)T(u,\,e_{-i}a\eps i,\,0)=T(u,\,v\eps{i},\,a)T(e_{-i},\,-ua\eps{-i},\,0).
\end{align*}
\end{proof}

Next, we define the elementary symplectic group and its combinatorial analogue, the symplectic Steinberg group, explicitly defined by generators and relations.

\begin{df}
Define $T_{ij}(a)=T(e_{i},\,e_{-j}a\eps{-j},\,0)$ and $T_{i,-i}(a)=T(e_i,\,0,\,a)$, where $a\in R$, $i$, $j\in\{-l,$ $\ldots,$ $-1,$ $1,$ $\ldots,$ $l\}$, $i\not\in\{\pm j\}$. We refer to these elements as the {\it elementary symplectic transvections}. The subgroup of $\Sp(V)$ they generate is called the {\it elementary symplectic group}
$$\Ep(V)=\Ep(2l,\,R)=\lan T_{ij}(a)\mid i\neq j,\ a\in R\ran\leq\Sp(V).$$
\end{df}

Our choice of long-root elementary transvections $T_{i,-i}(a)$ coincides with that in~\cite{Petoug}. The usual choice of sign differs from ours, namely for $2\chi_i\in C_l$ a long root, the corresponding elementary transvection is $x_{2\chi_i}(a)=T_{i,-i}(a\eps i)$. Notice also that $T_{ii}(a)$ are not defined. 

\begin{df}
The {\it symplectic Steinberg group} $\St\!\Sp(2l,\,R)$ is the group generated by the formal symbols $X_{ij}(a)$, $i\neq j$, $a\in R$ subject to the Steinberg relations
\setcounter{equation}{-1}
\renewcommand{\theequation}{R\arabic{equation}}
\begin{align}
&X_{ij}(a)=X_{-j,-i}(-a\eps i\eps j),\\
&X_{ij}(a)X_{ij}(b)=X_{ij}(a+b),\\
&[X_{ij}(a),\,X_{hk}(b)]=1,\text{ for }h\not\in\{j,-i\},\ k\not\in\{i,-j\},\\
&[X_{ij}(a),\,X_{jk}(b)]=X_{ik}(ab),\text{ for }i\not\in\{-j,-k\},\ j\neq-k,\\
&[X_{i,-i}(a),\,X_{-i,j}(b)]=X_{ij}(ab\eps i)X_{-j,j}(-ab^2),\\
&[X_{ij}(a),\,X_{j,-i}(b)]=X_{i,-i}(2ab\eps i).
\end{align}
Clearly, we assume that both sides in each relation are defined, in particular, that undefined elements of type $X_{ii}(a)$ do not appear. For example, R3 and R4 can be written only for $i\neq k$, thus we do not have any Steinberg relation compraising $[X_{kj}(a),\,X_{jk}(b)]$.
\end{df}

The next lemma is a straightforward consequence of Lemmas~\ref{esd-properties} and \ref{z-decomposition}.

\begin{lm}
There is a natural epimorphism $\phi\colon\St\!\Sp(2l,\,R)\epi\Ep(2l,\,R)$ sending the generators $X_{ij}(a)$ to the corresponding elementary transvections $T_{ij}(a)$. In other words, the Steinberg relations hold for the elementary transvections.
\end{lm}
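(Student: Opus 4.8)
The plan is to invoke the universal property of a group presented by generators and relations. Since $\St\!\Sp(2l,\,R)$ is defined by the symbols $X_{ij}(a)$ subject to R0--R5, a homomorphism $\phi$ with $\phi(X_{ij}(a))=T_{ij}(a)$ will exist as soon as the elementary transvections $T_{ij}(a)$ satisfy the images of all the defining relations. Once $\phi$ is constructed, surjectivity is immediate, since $\Ep(2l,\,R)$ is by definition generated by the $T_{ij}(a)$, and each of these lies in the image. Thus the whole content of the lemma reduces to verifying that R0--R5 remain valid after replacing every $X_{ij}$ by the corresponding $T_{ij}$, which is precisely what Lemmas~\ref{esd-properties} and~\ref{z-decomposition} were set up to deliver.

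First I would dispose of the non-commutator relations R0 and R1. For R0, writing $T_{ij}(a)=T(e_i,\,e_{-j}a\eps{-j},\,0)$ and applying property~(c) of Lemma~\ref{esd-properties} to interchange the two vectors yields $T(e_{-j},\,e_ia\eps{-j},\,0)$; since $\eps{-j}=-\eps j$ this equals $T_{-j,-i}(-a\eps i\eps j)$ after the obvious sign bookkeeping. For R1 one applies property~(b) with the common first argument $e_i$: the correction term $\lan e_{-j}a\eps{-j},\,e_{-j}b\eps{-j}\ran$ vanishes because $\lan e_{-j},\,e_{-j}\ran=0$, leaving $T_{ij}(a+b)$, and the long-root case $X_{i,-i}$ is the same computation with the second argument zero. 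Relation R2 expresses that two transvections with disjoint supports commute; here I would use the conjugation formula~(d) and observe that under the hypotheses $h\notin\{j,-i\}$, $k\notin\{i,-j\}$ the transformation $T_{hk}(b)$ fixes both vectors $e_i$ and $e_{-j}$ defining $T_{ij}(a)$, so the conjugation acts trivially and the commutator is $1$.

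The remaining relations R3, R4, R5 are the genuine commutator identities and form the core of the verification. For R3, the short-root case, the conjugation formula~(d) shows that $T_{jk}(b)$ moves $e_{-j}$ to $e_{-j}+e_{-k}b\eps{-k}\eps j$ while fixing $e_i$, and then the multiplication rule~(b) collapses the product to $T_{ik}(ab)$, the cross term again vanishing by orthogonality. Relations R4 and R5, which produce long-root unipotents, are exactly the situation isolated in Lemma~\ref{z-decomposition}: after relabelling indices (and, where needed, passing to the inverse commutator) and specialising the vectors $u$, $v$ there to basis vectors scaled by the ring parameters, the right-hand sides $X_{ij}(ab\eps i)X_{-j,j}(-ab^2)$ and $X_{i,-i}(2ab\eps i)$ are reproduced term by term. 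The main obstacle I anticipate is purely one of bookkeeping: keeping every sign $\eps i$, $\eps j$ and every identity $\eps{-i}=-\eps i$ consistent throughout, and checking in each case that the index constraints (no undefined $T_{ii}$, the restrictions $i\neq k$ in R3 and R4, and $j\neq-k$) are respected. Since all four parts of Lemma~\ref{esd-properties} and the single commutator formula of Lemma~\ref{z-decomposition} are already at hand, no new idea is required beyond careful substitution; the computation is straightforward but the sign conventions must be handled uniformly.
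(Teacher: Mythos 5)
Your proposal is correct and follows exactly the route the paper takes: the paper dispatches this lemma in one line as a ``straightforward consequence'' of Lemmas~\ref{esd-properties} and~\ref{z-decomposition}, and your verification of R0--R5 for the transvections $T_{ij}(a)$ (universal property of the presentation, properties (b)--(d) for R0--R3, and Lemma~\ref{z-decomposition} with relabelled/inverted commutators for R4--R5) is precisely the computation being implicitly invoked there. The sign bookkeeping you flag works out as anticipated, so nothing is missing.
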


It is well-known that the elementary subgroup is normal in the symplectic group (see~\cite{Kop,Tadsymp}). For the general linear group over an arbitrary commutative ring a similar result was first obtained by Suslin in~\cite{Sus}, see~\cite{Tulelem,Vas,BorVav} for generalisations. Later analogous results were obtained in larger generality: for classical groups in~\cite{KopSus,Vavstr,Vavthes,Stepthes,LiFuanSp,LiFuanO,Vasort,Vassymp,VP2}, for Chevalley groups in~\cite{Tadthes,Tadchev,Vaschev,Suz}, for unitary groups in~\cite{ABb, Bakelem, AB2, VasYouHong,VasMikh,Hazthes,Hazdim,HV,ABNV,Petover,BakTang}. For further generalisations see~\cite{BakVavfunc,Petoug,Stavthes,PetStav,Stepuni}. An overview on this subject can be found in~\cite{HOb,HV,StepVav2000}. In fact, the proof of centrality of $\mathrm K_2$ is based on the same ideas as the proof of normality of the elementary subgroup.

\begin{df}
Consider $\phi$ as a map from the Steinberg group to the symplectic group. Then its cokernel is denoted by $\Ko\!\Sp(2l,\,R)$ and its kernel is denoted by $\Kt\!\Sp(2l,\,R)$,
$$
\xymatrix{
\Kt\!\Sp(2l,\,R)\ \ar@{>->}[r]&\St\!\Sp(2l,\,R)\ar@{->}^{\ \ \phi}[r]&\Sp(2l,\,R)\ar@{->>}[r]&\Ko\!\Sp(2l,\,R).
}
$$
\end{df}

Now, all the necessary notations are introduced and we can outline the proof of the Main Theorem.

The goal is to define specific elements $X(u,\,v,\,a)\in \phi^{-1}T(u,\,v,\,a)$ subject to the defining relations. Firstly, in \S2 we define elements $X(e_i,\,v,\,a)$, i.e., consider a special case, where $u=e_i$ is a base vector. In fact, any element from a unipotent radical of type $U_1$ in $\Ep(2l,\,R)$ has the form $T(e_i,\,v,\,a)$. Further, the corresponding unipotent radicals of the Steinberg group and of the elementary group are isomorphic (Lemma~\ref{unirad}). This specifies the choice of $X(e_i,\,v,\,a)$. After that we verify the desired properties of $X(e_i,\,v,\,a)$ such as those stated in Lemmas~\ref{esd-properties} and \ref{z-decomposition}, see Lemmas~\ref{y-add}, \ref{y-conjugated-by-ps}, \ref{switch}, \ref{z-decomposition-for-y}.

In \S3 we generalise the definition of $X(u,\,v,\,a)$ to a wider class of $u$. Namely, we require that $u_i=u_{-i}=0$. First, observe that the property stated in Lemma~\ref{z-decomposition}, 
$$
[X(e_i,\,u,\,0),\,X(e_{-i},\,v,\,a)]=X(u,\,v\eps{i},\,a)X(e_{-i},\,-ua\eps{-i},\,0),
$$
follows from the desired relations on our ESD-generators P1--P3 (see Lemma~\ref{r3-r5}). In this formula, only $X(u,\,v\eps{i},\,a)$ is not yet defined. Thus, we can specify the choice of $X(u,\,v\eps i,\,a)\in\phi\inv T(u,\,v\eps i,\,a)$ by this formula. It can be done only in situation when $u_i=v_i=u_{-i}=v_{-i}=0$ (the condition of Lemma~\ref{z-decomposition}). Obviously, we have to check that this definition is correct (Lemma~\ref{z-correctness}). We also generalise this definition to the case of an arbitrary $v$ (where correctness is provided by Lemma~\ref{w-correctness}).

In \S4 we pass to an arbitrary $u$. Since $X(u,\,0,\,a)$ generate the Steinberg group (Lemma~\ref{generating}), we can assume that $v=0$. We use the analogue of the relation
$$
T(v+w,\,0,\,a)=T(v,\,0,\,a)T(w,\,0,\,a)T(w,\,va,\,0)
$$
to define ESD-generators of the Steinberg group. Take $w=e_iu_i+e_{-i}u_{-i}$ and $v=u-w$. Then the left hand side of the above equation is equal to $T(u,\,0,\,a)$, and all ESD-transformations at the right hand side are already lifted to the Steinberg group in \S3. Section \S4 is devoted to the proof of correctness (Lemma~\ref{correctness}) and some auxiliary results.

After defining Steinberg long-root unipotents $X(u,\,0,\,a)$, in \S5 we prove the conjugation property
$$
g\,X(u,\,0,\,a)g\inv=X(\phi(g)u,\,0,\,a).
$$
It suffices to consider only the cases $g=X_{i,-i}(b)$ (Lemma~\ref{conjugation-by-long}) and $g=X_{jk}(b)$ (Lemma~\ref{conjugation-by-short}). At this point the Main Theorem immediately follows.

The objective of \S6 is to prove relations P1--P6. We start with P5 (Lemma~\ref{x-long-scalar}). Then we define short-root unipotents $X(v,\,u,\,0)$ by the relation
$$
X(u+v,\,0,\,1)=X(u,\,0,\,1)X(v,\,0,\,1)X(v,\,u,\,0).
$$
After that, we obtain P4 (Lemma~\ref{short-obvious}) and P6 (Lemma~\ref{x-long-is-three-short}). Next, we define $X(u,\,v,\,a)=X(u,\,v,\,0)X(u,\,0,\,a)$ and check P1--P3 (Lemma~\ref{p-relations}).

Finally, in \S7 we define a symplectic van der Kallen group $\St\!\Sp^*(2l,\,R)$ by relations P1--P3 and show that it is actually isomorphic to the usual symplectic Steinberg group.

\section{Unipotent radicals in Steinberg groups}

Our first goal is to define analogues of ESD-transvections $T(u,\,v,\,a)$ in the Steinberg group for the special case when $u$ is a base vector.

\begin{df}
Define the ({\it Steinberg}) {\it unipotent radical} $$\ur i=\lan X_{ij}(a)\mid j\neq i,\ a\in R\ran$$ and the ({\it Steinberg}) {\it parabolic subgroup} $$\ps i=\lan X_{kh}(a)\mid \{h,-k\}\not\ni i,\ a\in R\ran.$$
\end{df}

The next result is well-known. It easily follows from the Steinberg relations.

\begin{lm}[Levi decomposition]
For $g\in\ps i$, $u\in\ur i$ one has 
$$gug\inv\in\ur i.$$
\end{lm}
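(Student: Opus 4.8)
The plan is to reduce the statement to a computation on generators and then dispatch that computation with the Steinberg relations R0--R5. Set $N=\{g\in\St\!\Sp(2l,\,R)\mid g\,\ur i\,g\inv\subseteq\ur i\}$. This is a submonoid of the Steinberg group, and the generators $X_{kh}(b)$ of $\ps i$ together with their inverses $X_{kh}(-b)$ are again among the generators of $\ps i$; hence once I show that every such $X_{kh}(b)$ lies in $N$, the submonoid generated by them, which is all of $\ps i$, lies in $N$, and this is exactly the claim. To verify $X_{kh}(b)\in N$ it suffices to conjugate the generators of $\ur i$, i.e. to check that $X_{kh}(b)X_{ij}(a)X_{kh}(b)\inv\in\ur i$ for all $j\neq i$ (here $h\neq i$ and $k\neq-i$, since $\{h,-k\}\not\ni i$). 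Because $X_{kh}(b)X_{ij}(a)X_{kh}(b)\inv=[X_{kh}(b),\,X_{ij}(a)]\,X_{ij}(a)$ and $X_{ij}(a)\in\ur i$, everything comes down to proving $[X_{kh}(b),\,X_{ij}(a)]\in\ur i$.

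Next I would record, using R0, the convenient description that $\ur i$ contains \emph{every} generator whose first index is $i$ \emph{or} whose second index is $-i$: indeed $X_{pq}(c)=X_{-q,-p}(-c\eps p\eps q)$, so $X_{i,q}(c)=X_{-q,-i}(\cdots)$ and conversely. Thus it is enough to exhibit each nontrivial commutator as a product of generators of this form.

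The core is then a short case analysis. The key observation is that the two parabolic conditions $h\neq i$ and $k\neq-i$ already force the R2-hypotheses $i\notin\{h,-k\}$; consequently $[X_{kh}(b),\,X_{ij}(a)]=1$ by R2 unless $j=k$ or $j=-h$. In particular, for the long-root generator $X_{i,-i}(a)$ (the case $j=-i$) all four R2-conditions hold automatically, the commutator vanishes, and there is nothing to prove --- this matches the elementary picture, where $T_{kh}(b)$ fixes $e_i$ and hence centralises $T(e_i,\,0,\,a)$. For $j\neq-i$ the surviving possibilities are handled by R3--R5: if $j=k$ one gets $X_{ih}(\cdots)\in\ur i$ by R3, or $X_{i,-i}(\cdots)\in\ur i$ by R5 in the degenerate subcase $h=-i$; if $j=-h$ one rewrites $X_{ij}(a)$ as $X_{-j,-i}(\cdots)$ via R0 and applies R3 to obtain a generator with second index $-i$ (the subcase $k=i$ being trivial, as then $X_{kh}(b)$ itself already lies in $\ur i$); and the overlap $j=k$, $h=-k$ produces, via R4, a product $X_{j,-i}(\cdots)X_{i,-i}(\cdots)$, again inside $\ur i$.

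The main obstacle is not conceptual but organisational: one must keep track of which relation among R2--R5 fires in each configuration of the indices $i,\,j,\,k,\,h$, and, crucially, recognise through R0 that outputs with second index $-i$ belong to $\ur i$. Once the bookkeeping is arranged as above, every nontrivial commutator is visibly a product of generators of $\ur i$, which completes the proof.
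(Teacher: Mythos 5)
Your proof is correct and takes exactly the approach the paper intends: the paper states this lemma without proof, remarking only that it ``easily follows from the Steinberg relations,'' and your generator-by-generator reduction with the case analysis via R2--R5 (plus the R0 observation that generators with second index $-i$ lie in $\ur i$) supplies precisely those details. All index conditions in your cases check out, so the sketch is complete.
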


The next lemma is another obvious consequence of the Steinberg relations.

\begin{lm}
\label{heis}
One has
$$[\ur i,\,\ur i]\leq\lan X_{i,-i}(a)\ran,\qquad [\ur i,\,\lan X_{i,-i}(a)\ran]=1.$$
\end{lm}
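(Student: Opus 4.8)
The subgroup $\ur i$ carries the structure of a Heisenberg-type group whose centre is the long-root subgroup $Z=\lan X_{i,-i}(a)\mid a\in R\ran$, and the plan is to deduce both assertions by reducing them to commutator identities between generators. Since $\ur i$ is generated by the $X_{ij}(a)$ with $j\neq i$ and $Z$ by the $X_{i,-i}(b)$, it suffices in each case to verify the claimed identities on generators and then invoke the standard fact that if two generating sets commute elementwise, so do the subgroups they generate.

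For the centrality statement $[\ur i,\,Z]=1$ I would check that $[X_{ij}(a),\,X_{i,-i}(b)]=1$ for every $j\neq i$. When $j=-i$ this is immediate from R1, since the $X_{i,-i}(a)$ all lie in one abelian one-parameter subgroup. When $j\neq\pm i$ it follows directly from R2 applied to $[X_{ij}(a),\,X_{hk}(b)]$ with $h=i$ and $k=-i$: the side conditions $h\notin\{j,-i\}$ and $k\notin\{i,-j\}$ both reduce to $i\neq j$, which holds. In particular $Z$ is central, hence normal, in $\ur i$.

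For the commutator statement $[\ur i,\,\ur i]\leq Z$ I would show that the quotient $\ur i/Z$ is abelian. Its generators are the images of the short-root generators $X_{ij}(a)$ with $j\neq\pm i$ (the $X_{i,-i}$ vanish in the quotient), so it is enough to check that any two of them commute modulo $Z$. If $k\neq-j$, then $[X_{ij}(a),\,X_{ik}(b)]=1$ already in $\ur i$ by R2, the conditions $h=i\notin\{j,-i\}$ and $k\notin\{i,-j\}$ holding because $j\neq i$, $k\neq i$ and $k\neq-j$. The only genuinely nontrivial case is $k=-j$: here I would first rewrite $X_{i,-j}(b)=X_{j,-i}(-b\eps{i}\eps{-j})$ by R0, and then R5 yields $[X_{ij}(a),\,X_{i,-j}(b)]=X_{i,-i}(c)\in Z$ for a suitable scalar $c$. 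Hence all commutators of generators lie in $Z$, so $\ur i/Z$ is abelian and $[\ur i,\,\ur i]\leq Z$.

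The computation is routine; the only delicate points are the bookkeeping of the side conditions in R2 and the preliminary use of R0 that puts $X_{i,-j}$ into the shape demanded by R5. There is no essential obstacle beyond ensuring that the forbidden coincidences $X_{ii}$ never occur, which is guaranteed by the standing hypotheses $j\neq i$ and, in the last case, $j\neq\pm i$.
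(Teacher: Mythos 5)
Your proof is correct and is exactly what the paper intends: the paper gives no written proof, asserting the lemma as an ``obvious consequence of the Steinberg relations,'' and your argument is precisely that verification, using R1/R2 for centrality of $\lan X_{i,-i}(a)\ran$ and R2 together with R0 and R5 for the commutator statement, with all side conditions of R2 checked correctly. Nothing is missing.
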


In other words, this lemma asserts that the root subgroup $X_{i,-i}$ lies in the centre of $\ur i$, and that the nilpotent class of $\ur i$ is $\leq2$. Now it is easy to see the following corollary.

\begin{cl*}
Every element of $\ur i$ can be expressed in the form $$X_{i,-l}(a_{-l})\ldots X_{i,-1}(a_{-1})X_{i,1}(a_1)\ldots X_{i,l}(a_l).$$
Clearly, we mean that the nonexistent factor $X_{ii}(a_i)$ is omitted in this product.
\end{cl*}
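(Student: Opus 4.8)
The plan is to show that the set $S\subseteq\ur i$ of those elements admitting an expression of the prescribed ordered form is in fact all of $\ur i$. Since the generators $X_{ij}(a)$ generate $\ur i$, and their inverses $X_{ij}(a)\inv=X_{ij}(-a)$ are again generators by R1, every element of $\ur i$ is a product of generators. As $1\in S$, it therefore suffices to prove that $S$ is closed under right multiplication by an arbitrary generator $X_{ij}(b)$; an immediate induction on the length of a word then places every element of $\ur i$ in $S$.

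The essential input is Lemma~\ref{heis}. For any two generators $X_{ik}(a)$, $X_{ij}(b)$ the commutator $[X_{ik}(a),\,X_{ij}(b)]$ lies in $\lan X_{i,-i}(c)\ran$, which is moreover central in $\ur i$. Hence interchanging two adjacent factors only introduces a correction term drawn from the central subgroup $\lan X_{i,-i}(c)\ran$, and such a term may be pulled out of the product and set aside at once.

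Now take $g=X_{i,-l}(a_{-l})\ldots X_{il}(a_l)\in S$ and append $X_{ij}(b)$ on the right. Using $xy=[x,\,y]\,yx$ I move $X_{ij}(b)$ leftwards, past each factor $X_{ik}(a_k)$ whose index $k$ follows $j$ in the ordering $-l<\ldots<-1<1<\ldots<l$. Every such transposition rewrites $X_{ik}(a_k)X_{ij}(b)$ as $[X_{ik}(a_k),\,X_{ij}(b)]\,X_{ij}(b)X_{ik}(a_k)$, and by the preceding paragraph the commutator is central, so it is extracted immediately. Once $X_{ij}(b)$ reaches its own slot it merges with $X_{ij}(a_j)$ into $X_{ij}(a_j+b)$ by R1; the case $j=-i$ is even simpler, since $X_{i,-i}(b)$ is itself central and falls directly into the $X_{i,-i}$-slot. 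Finally, all the accumulated central corrections, lying in $\lan X_{i,-i}(c)\ran$ and commuting with everything in $\ur i$, are gathered and combined by R1 with the existing factor $X_{i,-i}(a_{-i})$. The outcome is again of the prescribed ordered form, so $gX_{ij}(b)\in S$, completing the induction and hence the proof.

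I expect no genuine difficulty here: the procedure is exactly the standard normal-form (collection) argument for a nilpotent group of class $\leq2$, and all of its force has already been packaged into Lemma~\ref{heis}. The only point requiring care is the bookkeeping of the central correction terms produced at each transposition, together with the verification that these terms, and any $X_{i,-i}(b)$ arising directly, can always be absorbed into the single designated $X_{i,-i}$-slot — which is legitimate precisely because $X_{i,-i}$ is central in $\ur i$.
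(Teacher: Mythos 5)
Your proof is correct and is exactly the argument the paper intends: the corollary is stated as an immediate consequence of Lemma~\ref{heis}, and the standard collection procedure you spell out (transposing factors, extracting the central commutators into $\lan X_{i,-i}(c)\ran$, and merging everything into the single $X_{i,-i}$-slot via R1) is precisely what the paper's ``it is easy to see'' abbreviates.
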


\begin{lm}
\label{unirad}
The restriction of the natural projection $\phi\colon\St\!\Sp(2l,\,R)\epi\Ep(2l,\,R)$ to $\ur i$ is injective
$$\ur i\cong \phi(\ur i).$$
\end{lm}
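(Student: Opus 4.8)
The plan is to prove that $\ker\bigl(\phi|_{\ur i}\bigr)$ is trivial, which gives injectivity at once since $\phi|_{\ur i}$ is a group homomorphism. By the preceding Corollary, any $g\in\ur i$ can be written in the form
$$
g=X_{i,-l}(a_{-l})\cdots X_{i,-1}(a_{-1})X_{i,1}(a_1)\cdots X_{i,l}(a_l)
$$
(with the nonexistent factor $X_{ii}$ omitted). I want to stress that this Corollary only asserts \emph{existence} of such an expression, not uniqueness, so I would not argue by comparing normal forms. Instead, I would compute $\phi(g)$ explicitly as a single ESD-transformation and then recover the entire tuple $(a_j)_{j\neq i}$ from it; triviality of the kernel follows because $\phi(g)=1$ then forces every $a_j=0$.

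For the computation, recall $\phi(X_{ij}(a_j))=T_{ij}(a_j)=T(e_i,\,e_{-j}a_j\eps{-j},\,0)$ for $j\neq\pm i$ and $\phi(X_{i,-i}(a_{-i}))=T_{i,-i}(a_{-i})=T(e_i,\,0,\,a_{-i})$. All these factors share the first slot $e_i$, so by repeated application of part (2) of Lemma~\ref{esd-properties} their product collapses to
$$
\phi(g)=T\Bigl(e_i,\ \textstyle\sum_{j\neq\pm i}e_{-j}a_j\eps{-j},\ a_{-i}+c\Bigr),
$$
where the long-root factor $T(e_i,0,a_{-i})$ contributes only to the last slot, and $c$ is a fixed bilinear expression in the $a_j$ with $j\neq\pm i$ coming from the $\lan\ ,\ \ran$-terms in part (2). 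Set $v=\sum_{j\neq\pm i}e_{-j}a_j\eps{-j}$ and $a=a_{-i}+c$; note that $v$ has vanishing $e_i$- and $e_{-i}$-components by construction.

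It remains to recover $(a_j)_{j\neq i}$ from $\phi(g)=T(e_i,\,v,\,a)$. Evaluating this ESD-transformation on a basis vector $e_k$ with $k\neq\pm i$ gives $T(e_i,v,a)e_k=e_k+e_i\lan v,\,e_k\ran$, so the $e_i$-coefficient equals $\lan v,\,e_k\ran=\eps{k}v_{-k}$; reading this off for all such $k$ determines every coordinate of $v$ except $v_i$ and $v_{-i}$, hence $a_j=v_{-j}\eps{-j}$ for all $j\neq\pm i$. Once $v$, and therefore $c$, is known, evaluating on $e_{-i}$ recovers $a$ and thus $a_{-i}=a-c$. In particular, if $\phi(g)=1=T(e_i,0,0)$ then $v=0$, forcing $a_j=0$ for $j\neq\pm i$, whence $c=0$ and $a_{-i}=0$; so $g=1$. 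The one genuine subtlety, which I would flag as the main point to handle with care, is that the ESD-notation is \emph{not} faithful in general, since $T(e_i,\,v+e_is,\,a)=T(e_i,\,v,\,a+2s)$; the recovery above works precisely because the relevant $v$ avoids the $e_i$- and $e_{-i}$-directions, and it is exactly this vanishing that lets the coordinatewise reading pin down the $a_j$ unambiguously.
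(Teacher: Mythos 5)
Your proof is correct and takes essentially the same route as the paper: decompose $g\in\ur i$ via the Corollary, apply $\phi$, and observe that the resulting matrix determines every coefficient $a_j$, so $\phi(g)=1$ forces $g=1$; your explicit collapse of the image to a single ESD-transformation $T(e_i,\,v,\,a)$ and the coordinate reading merely spell out the step the paper leaves implicit. (One harmless slip: $\lan v,\,e_k\ran=\eps{-k}v_{-k}$, not $\eps{k}v_{-k}$; the sign is immaterial to the recovery, which only needs that these values pin down $v$ and hence the $a_j$.)
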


\begin{proof}
Take an element $x\in\ur i$. Using the above corollary, decompose $x$ as $$x=X_{i,-l}(a_{-l})\ldots X_{i,-1}(a_{-1})X_{i,1}(a_1)\ldots X_{i,l}(a_l).$$ Then $\phi(x)=1$ implies that $a_i=0$ for all $i$.
\end{proof}

\begin{lm}
\label{unipotent-decomposition}
For $v\in V$ such that $v_{-i}=0$ denote $$v_-=\sum_{i<0}e_iv_i\qquad \text{and}\qquad v_+=\sum_{i>0}e_iv_i.$$ Then
\begin{multline*}
T(e_i,\,v,\,a)=T_{i,-i}(a+2v_i-v_i\eps i-\lan v_-,\,v_+\ran)\cdot\\ \cdot T_{-l,-i}(v_{-l}\eps i)\ldots T_{-1,-i}(v_{-1}\eps i)T_{1,-i}(v_1\eps i)\ldots T_{l,-i}(v_l\eps i).
\end{multline*}
\end{lm}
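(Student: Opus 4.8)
The claim is a purely computational identity: it expresses the ESD-transformation $T(e_i,\,v,\,a)$, with $v_{-i}=0$, as an explicit product of elementary transvections. My plan is to verify it by a direct calculation, exploiting the multiplicativity property {\it b}) of Lemma~\ref{esd-properties} to break the right-hand side into manageable pieces, and then matching the result against the defining formula for $T(e_i,\,v,\,a)$ applied to an arbitrary test vector $w$.

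First I would rewrite each elementary factor on the right in the unified ESD-notation: by definition $T_{j,-i}(c)=T(e_j,\,e_i c\eps i,\,0)$ for $j\neq i$, and $T_{i,-i}(c)=T(e_i,\,0,\,c)$. The key observation is that every short-root factor $T_{j,-i}(v_j\eps i)$ shares the common ``column'' vector $e_i$ after applying property {\it c}) of Lemma~\ref{esd-properties}, which lets me swap the two vector arguments: $T(e_j,\,e_i v_j\eps i\eps i,\,0)=T(e_j,\,e_i v_j,\,0)=T(e_i,\,e_j v_j,\,0)$ (using $\eps i\eps i=1$). Thus all the short-root factors, together with the long-root factor $T_{i,-i}$, have first argument $e_i$, and I can collapse the entire product using property {\it b}) repeatedly. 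Collapsing $\prod_j T(e_i,\,e_j v_j,\,0)$ produces $T(e_i,\,\sum_{j\neq i} e_j v_j,\,c)$ where $c$ accumulates the symplectic-form correction terms $\lan e_j v_j,\,e_k v_k\ran$ from the iterated application of {\it b}).

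The main work — and the step I expect to be the genuine obstacle — is bookkeeping the scalar corrections so that the accumulated long-root coefficient exactly matches $a$. Two sources contribute: the cross-terms $\lan v_-,\,v_+\ran$ arising from merging short-root factors of opposite sign (since $\lan e_j,\,e_k\ran$ is nonzero only for $k=-j$), and the terms involving $v_i$ itself (recall $v_{-i}=0$ but $v_i$ need not vanish, and $e_i v_i$ is precisely the part of $v$ that would have been an $T_{ii}$-factor, which does not exist and so must be absorbed into the long-root coefficient). The coefficient $a+2v_i-v_i\eps i-\lan v_-,\,v_+\ran$ in the statement is designed to cancel exactly these contributions. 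I would track them by writing $v=\sum_{j\neq i}e_j v_j+e_i v_i$ and carefully computing, via property {\it b}), how the $e_i v_i$ component interacts both with the long-root generator (contributing the $2v_i-v_i\eps i$ terms, via $\lan e_i,\,e_i\ran=0$ and the form $\lan v_-,v_+\ran$ structure) and with the remaining short-root pieces.

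Having assembled the right-hand side into a single ESD-transformation $T(e_i,\,v',\,a')$, I would finish by checking that $v'=v$ and $a'=a$. The equality $v'=v$ is immediate once the short-root factors are recombined, since $\sum_{j} e_j v_j$ reconstitutes $v$ on the subspace orthogonal to $e_i$, and the $v_i$-contribution is handled by the scalar adjustment. For $a'=a$, I would substitute the explicit value of the long-root coefficient from the statement and verify the cancellation; alternatively, and perhaps more cleanly, I would evaluate both $T(e_i,\,v,\,a)$ and the product on a general $w\in V$ using the defining formula $w\mapsto w+u(\lan v,\,w\ran+a\lan u,\,w\ran)+v\lan u,\,w\ran$, reducing the whole lemma to comparing two explicit vector-valued expressions coordinate by coordinate. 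This second route sidesteps the subtleties of ordering the noncommuting factors and is the approach I would ultimately prefer for reliability, even though it is more tedious.
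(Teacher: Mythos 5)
Your proposal is correct and is essentially the paper's own proof: the paper likewise rewrites every factor in the common form $T(e_i,\,\cdot\,,\,\cdot)$ via parts {\it b}) and {\it c}) of Lemma~\ref{esd-properties} (with $T_{i,-i}(2v_i)=T(e_i,\,e_iv_i,\,0)$ absorbing the diagonal component, whence the $2v_i-v_i\eps i$ terms) and then collapses the product, picking up exactly the $\lan v_-,\,v_+\ran$ correction. Your fallback of evaluating both sides on a test vector $w$ is also valid but is not needed once everything is written with first argument $e_i$, since all such factors multiply by the clean rule {\it b}).
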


\begin{proof}
Assume that $i>0$, for $i<0$ the proof looks exactly the same. Since $T_{j,-i}(v_j\eps i)=T(e_i,\,e_jv_j,\,0)$ for $j\neq-i$ and $$T_{i,-i}(2v_i)=T(e_i,\,0,\,2v_i)=T(e_i,\,e_iv_i,\,0),$$ one has 
\begin{multline*}
T_{-l,-i}(v_{-l}\eps i)\ldots T_{-1,-i}(v_{-1}\eps i)=\\=T(e_i,\,e_{-l}v_{-l},\,0)\ldots T(e_i,\,e_{-1}v_{-1},\,0)=T(e_i,\,v_-,\,0),
\end{multline*}
and
\begin{multline*}
T_{i,-i}(2v_i)T_{i,-i}(-v_i\eps i)T_{1,-i}(v_{1}\eps i)\ldots T_{l,-i}(v_{l}\eps i)=\\=T(e_i,\,e_{1}v_{1},\,0)\ldots T(e_i,\,e_{l}v_{l},\,0)=T(e_i,\,v_+,\,0).
\end{multline*}
Here factors $T_{i,-i}(-v_i\eps i)$ and $T_{i,-i}(v_i\eps i)$ just cancel each other. So that the right hand side of the desired equality is in fact equal to
\begin{multline*}
T_{i,-i}(a-\lan v_-,\,v_+\ran)T(e_i,\,v_-,\,0)T(e_i,\,v_+,\,0)=\\=T(e_i,\,0,\,a-\lan v_-,\,v_+\ran)T(e_i,v,\lan v_-,\,v_+\ran)=T(e_i,\,v,\,a).
\end{multline*}
\end{proof}

\begin{df}
For $v\in V$ with $v_{-i}=0$, $a\in R$, define $$Y(e_i,\,v,\,a)=(\phi\vert_{\ur i})\inv\big(T(e_i,\,v,\,a)\big).$$
\end{df}

\begin{rk}
By Lemma~\ref{unipotent-decomposition}, $T(e_i,\,v,\,a)$ indeed lies in $\phi(\ur i)$. Moreover, the same lemma provides the following decomposition.
\end{rk}

\begin{lm}
\label{y-decomposition}
For $v\in V$ such that $v_{-i}=0$, $a\in R$, one has
\begin{multline*}
Y(e_i,\,v,\,a)=X_{i,-i}(a+2v_i-v_i\eps i-\lan v_-,\,v_+\ran)\cdot\\ \cdot X_{-l,-i}(v_{-l}\eps i)\ldots X_{-1,-i}(v_{-1}\eps i)X_{1,-i}(v_1\eps i)\ldots X_{l,-i}(v_l\eps i).
\end{multline*}
\end{lm}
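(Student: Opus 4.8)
The plan is to read the decomposition off directly from Lemma~\ref{unipotent-decomposition}, the definition of $Y(e_i,\,v,\,a)$, and the injectivity asserted in Lemma~\ref{unirad}. By definition $Y(e_i,\,v,\,a)$ is the unique element of $\ur i$ sent by $\phi$ to $T(e_i,\,v,\,a)$; hence it suffices to exhibit an element of $\ur i$ whose $\phi$-image equals $T(e_i,\,v,\,a)$ and to recognise it as the asserted product. Uniqueness then forces the two to coincide.

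First I would check that the right hand side of the asserted identity really lies in $\ur i$. Both the leading factor and the term with index $j=i$, being of the form $X_{i,-i}(\,\cdot\,)=X_{ij}(\,\cdot\,)$ with $j=-i\neq i$, are generators of $\ur i$. For $j\notin\{i,\,-i\}$ relation R0 gives $X_{j,-i}(b)=X_{i,-j}(-b\eps j\eps{-i})$, again a defining generator $X_{ik}(\,\cdot\,)$ of $\ur i$ since $-j\neq i$. The only potentially troublesome index, $j=-i$, would produce the undefined symbol $X_{-i,-i}$, but that factor is simply absent: the hypothesis $v_{-i}=0$ makes its coefficient vanish. Hence the whole product belongs to $\ur i$.

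It then remains to apply $\phi$. Since $\phi$ is a homomorphism sending each $X_{jk}(b)$ to $T_{jk}(b)$, the image of the product is
$$
T_{i,-i}(a+2v_i-v_i\eps i-\lan v_-,\,v_+\ran)\,T_{-l,-i}(v_{-l}\eps i)\cdots T_{l,-i}(v_l\eps i),
$$
which equals $T(e_i,\,v,\,a)$ by Lemma~\ref{unipotent-decomposition}. Thus the product is an element of $\ur i$ with image $T(e_i,\,v,\,a)$, and the injectivity of $\phi|_{\ur i}$ from Lemma~\ref{unirad} identifies it with $Y(e_i,\,v,\,a)$. There is essentially no obstacle here: all the genuine computation — the cancellation of the two $T_{i,-i}(\pm v_i\eps i)$ contributions and the appearance of the correction term $-\lan v_-,\,v_+\ran$ coming from the commutator in Lemma~\ref{esd-properties}~b) — has already been carried out in the proof of Lemma~\ref{unipotent-decomposition}. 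The only point demanding care is the bookkeeping above: recognising each $X_{j,-i}$ as a generator of $\ur i$ via R0 and confirming that the $v_{-i}=0$ hypothesis removes the sole undefined factor.
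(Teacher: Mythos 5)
Your proof is correct and follows essentially the same route as the paper, which treats this lemma as an immediate consequence of the definition of $Y(e_i,\,v,\,a)$ as $(\phi\vert_{\ur i})\inv\big(T(e_i,\,v,\,a)\big)$ together with Lemma~\ref{unipotent-decomposition} and the injectivity from Lemma~\ref{unirad}. Your extra bookkeeping (using R0 to see each $X_{j,-i}$ lies in $\ur i$, and noting the absent $X_{-i,-i}$ factor) only makes explicit what the paper leaves implicit.
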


\setcounter{cl}{0}

\begin{cl}
In particular, $Y(e_{-j},\,-e_ia\eps j,\,0)=X_{ij}(a)$ for $i\not\in\{\pm j\}$ and $Y(e_i,\,0,\,a)=X_{i,-i}(a)$.
\end{cl}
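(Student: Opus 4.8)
The plan is to read off both identities directly from the explicit decomposition of $Y(e_i,\,v,\,a)$ furnished by Lemma~\ref{y-decomposition}; no new idea is needed, only a careful substitution into that formula.

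I would dispose of the second identity first, since it is immediate. Setting $v=0$ in the decomposition, every coordinate $v_k$ vanishes and $v_-=v_+=0$, so each off-diagonal factor $X_{k,-i}(v_k\eps i)$ becomes $X_{k,-i}(0)=1$, while the leading factor collapses to $X_{i,-i}(a+0-0-0)=X_{i,-i}(a)$. Hence $Y(e_i,\,0,\,a)=X_{i,-i}(a)$.

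For the first identity I would apply Lemma~\ref{y-decomposition} with the base index $-j$ playing the role of $i$, the vector $v=-e_ia\eps j$, and scalar argument $0$. The hypothesis of the lemma asks that the coordinate of $v$ in position $-(-j)=j$ vanish; as $i\neq j$ this holds, since $v$ is supported on the single basis vector $e_i$. I then examine the leading factor $X_{-j,\,j}\big(0+2v_{-j}-v_{-j}\eps{-j}-\lan v_-,\,v_+\ran\big)$: because $i\neq -j$ one has $v_{-j}=0$, and because $v$ has only one nonzero coordinate the product $\lan v_-,\,v_+\ran$ vanishes as well, so this factor is trivial. Among the remaining factors $X_{k,\,j}(v_k\eps{-j})$ only the term with $k=i$ survives, and using $\eps{-j}=-\eps j$ together with $\eps j^2=1$ its argument becomes $v_i\eps{-j}=(-a\eps j)(-\eps j)=a$. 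Therefore $Y(e_{-j},\,-e_ia\eps j,\,0)=X_{ij}(a)$.

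There is no serious obstacle here; the only points that require care are the sign bookkeeping through $\eps{-j}=-\eps j$ and $\eps j^2=1$, and the observation that $\lan v_-,\,v_+\ran=0$ precisely because $v$ is supported on a single basis vector. The conditions $i\notin\{\pm j\}$ serve exactly to guarantee that the symbol $X_{ij}(a)$ is defined, that the coordinate constraint of Lemma~\ref{y-decomposition} is met, and that $v_{-j}=0$ kills the long-root contribution in the leading factor.
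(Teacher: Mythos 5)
Your proof is correct and is exactly the argument the paper intends: the corollary is stated as an immediate consequence of Lemma~\ref{y-decomposition}, and your substitutions (base index $-j$, $v=-e_ia\eps j$, $a=0$ for the first identity; $v=0$ for the second), including the sign bookkeeping $\eps{-j}=-\eps j$ and the vanishing of $\lan v_-,\,v_+\ran$, are precisely the intended verification.
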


\begin{cl}
For $j\neq-i$, $v\in V$ such that $v_{-i}=v_{-j}=0$, $a\in R$, one has $Y(e_i,\,v,\,a)\in\ps j$.
\end{cl}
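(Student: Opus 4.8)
The plan is to read the claim directly off the explicit decomposition of $Y(e_i,\,v,\,a)$ supplied by Lemma~\ref{y-decomposition} and to verify, factor by factor, that every constituent of that product is a generator of $\ps j$. Recall that $X_{kh}(b)$ occurs among the generators of $\ps j$ precisely when $\{h,\,-k\}\not\ni j$, i.e. when $h\neq j$ and $k\neq-j$. By Lemma~\ref{y-decomposition} every factor of $Y(e_i,\,v,\,a)$ has second index equal to $-i$, so the requirement $h\neq j$ reduces to $-i\neq j$, which is exactly the standing hypothesis $j\neq-i$. Hence the only remaining point is to check that the first index $k$ of each factor that actually appears satisfies $k\neq-j$.

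First I would dispose of the leading long-root factor $X_{i,-i}(a+2v_i-v_i\eps i-\lan v_-,\,v_+\ran)$. Its first index is $i$, and $i=-j$ would force $j=-i$, which is excluded; so this factor lies in $\ps j$ regardless of its argument. Next I would run through the remaining factors $X_{k,-i}(v_k\eps i)$, where $k$ ranges over the admissible indices different from $-i$. The condition $k\neq-j$ can fail only for the single value $k=-j$, so the whole matter comes down to controlling that one factor.

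The main (and essentially only) point is then to eliminate the potentially offending factor $X_{-j,-i}(v_{-j}\eps i)$, which I would handle by splitting into two cases. If $j=i$, then $-j=-i$ and this would be the nonexistent factor $X_{-i,-i}$, which does not occur in the decomposition at all; all surviving factors then have $k\neq-i=-j$ and lie in $\ps j$. If $j\neq i$ (and still $j\neq-i$), then $X_{-j,-i}(v_{-j}\eps i)$ is a genuine factor of the product, but its argument equals $v_{-j}\eps i=0$ by the hypothesis $v_{-j}=0$, so it is the identity and may be dropped. In either case every factor actually appearing in $Y(e_i,\,v,\,a)$ is either trivial or a generator of $\ps j$, whence $Y(e_i,\,v,\,a)\in\ps j$. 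I do not expect a genuine obstacle here: the argument is a bookkeeping check, and the only care required is the clean separation of the degenerate index configurations $j=i$ and $j=-i$, the latter being ruled out by hypothesis precisely so that the leading long-root factor stays inside $\ps j$.
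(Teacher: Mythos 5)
Your proof is correct and takes essentially the same approach as the paper: the paper states this corollary without proof as an immediate consequence of Lemma~\ref{y-decomposition}, and your factor-by-factor check against the generators of $\ps j$ is precisely that argument made explicit. Your careful separation of the cases $j=i$ (offending factor nonexistent) and $j\neq\pm i$ (offending factor trivial since $v_{-j}=0$) is exactly the bookkeeping the paper leaves to the reader.
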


\begin{lm}
\label{y-add}
For $v$, $w\in V$ such that $v_{-i}=w_{-i}=0$ and $a$, $b\in R$, one has 
$$
Y(e_i,\,v,\,a)Y(e_i,\,w,\,b)=Y(e_i,\,v+w,\,a+b+\lan v,\,w\ran).
$$
\end{lm}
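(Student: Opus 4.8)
The plan is to reduce the whole identity to the isomorphism $\phi\vert_{\ur i}\colon\ur i\cong\phi(\ur i)$ established in Lemma~\ref{unirad}, so that the relation can be verified after applying $\phi$ and then lifted back uniquely. Since $v_{-i}=w_{-i}=0$, both $Y(e_i,\,v,\,a)$ and $Y(e_i,\,w,\,b)$ are defined and, by their very construction, lie in the Steinberg unipotent radical $\ur i$; as $\ur i$ is a subgroup, their product lies there too.

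First I would apply $\phi$ to the left-hand side. Because $\phi$ is a homomorphism and $\phi\big(Y(e_i,\,v,\,a)\big)=T(e_i,\,v,\,a)$ by definition, this yields $\phi\big(Y(e_i,\,v,\,a)Y(e_i,\,w,\,b)\big)=T(e_i,\,v,\,a)\,T(e_i,\,w,\,b)$. Next I would collapse this product using part b) of Lemma~\ref{esd-properties}. The only hypothesis to check is $\lan e_i,\,v\ran=\lan e_i,\,w\ran=0$; but $\lan e_i,\,v\ran=\eps i v_{-i}$ and likewise for $w$, so both vanish precisely because $v_{-i}=w_{-i}=0$. Hence $T(e_i,\,v,\,a)\,T(e_i,\,w,\,b)=T(e_i,\,v+w,\,a+b+\lan v,\,w\ran)$.

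Finally, since $(v+w)_{-i}=v_{-i}+w_{-i}=0$, the element $Y(e_i,\,v+w,\,a+b+\lan v,\,w\ran)$ is defined and is, by construction, the unique preimage in $\ur i$ of $T(e_i,\,v+w,\,a+b+\lan v,\,w\ran)$. Since the product $Y(e_i,\,v,\,a)Y(e_i,\,w,\,b)$ already lies in $\ur i$ and has the same image under $\phi$, injectivity of $\phi\vert_{\ur i}$ forces the two to coincide, which is the assertion.

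There is essentially no genuine obstacle here: the argument is purely formal once Lemma~\ref{unirad} is in hand, and the computation in $\Ep(2l,\,R)$ has already been packaged into Lemma~\ref{esd-properties}. The only point deserving a moment's care is the verification of the orthogonality hypothesis $\lan e_i,\,v\ran=\lan e_i,\,w\ran=0$ needed to invoke Lemma~\ref{esd-properties} b), and this follows at once from the coordinate conditions $v_{-i}=w_{-i}=0$ together with the formula $\lan e_i,\,e_j\ran=\eps i\delta_{i,-j}$.
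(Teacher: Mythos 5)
Your proof is correct and follows essentially the same route as the paper: both sides lie in $\ur i$ by construction, their images under $\phi$ agree by Lemma~\ref{esd-properties}~b), and injectivity of $\phi\vert_{\ur i}$ (Lemma~\ref{unirad}) forces equality. The paper's proof is just a terser version of this, noting well-definedness of the right-hand side and that the $\phi$-images coincide.
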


\begin{proof}
Obviously, $(v+w)_{-i}=0$, so that the right hand side of this equality is well-defined. Now, it remains to observe that the images of the elements on both sides under $\phi$ coincide.
\end{proof}

\begin{cl*}
One has $Y(e_i,\,0,\,0)=1$ and $\,Y(e_i,\,v,\,a)\inv=Y(e_i,\,-v,\,-a)$.
\end{cl*}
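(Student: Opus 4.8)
The plan is to derive both identities directly from Lemma~\ref{y-add} together with the definition of $Y(e_i,\,v,\,a)$ as $(\phi\vert_{\ur i})\inv\big(T(e_i,\,v,\,a)\big)$.

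First I would establish $Y(e_i,\,0,\,0)=1$. By the Remark following Lemma~\ref{esd-properties} one has $T(e_i,\,0,\,0)=1$, and since $\phi\vert_{\ur i}$ is a group isomorphism onto $\phi(\ur i)$ by Lemma~\ref{unirad}, its inverse carries the identity to the identity. Hence $Y(e_i,\,0,\,0)=(\phi\vert_{\ur i})\inv(1)=1$.

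For the inverse formula I would apply Lemma~\ref{y-add} with $w=-v$ and $b=-a$. Note that $(-v)_{-i}=0$ whenever $v_{-i}=0$, so $Y(e_i,\,-v,\,-a)$ is defined and the lemma is applicable. This gives
$$
Y(e_i,\,v,\,a)\,Y(e_i,\,-v,\,-a)=Y(e_i,\,v-v,\,a-a+\lan v,\,-v\ran)=Y(e_i,\,0,\,0),
$$
where I use $\lan v,\,-v\ran=-\lan v,\,v\ran=0$, since the symplectic form is alternating (as recorded in the Notations). Combining this with the first part yields $Y(e_i,\,v,\,a)\,Y(e_i,\,-v,\,-a)=1$, that is, $Y(e_i,\,v,\,a)\inv=Y(e_i,\,-v,\,-a)$.

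Since everything reduces to a single application of the additivity lemma and the vanishing $\lan v,\,v\ran=0$, there is no real obstacle here; the only point requiring a moment's care is verifying that $Y(e_i,\,-v,\,-a)$ is legitimately defined, which is immediate from $(-v)_{-i}=0$.
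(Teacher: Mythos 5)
Your proof is correct and follows exactly the route the paper intends: the corollary is stated without explicit proof precisely because it is the immediate consequence of Lemma~\ref{y-add} (applied with $w=-v$, $b=-a$, using $\lan v,\,v\ran=0$) together with the fact that $Y(e_i,\,0,\,0)=(\phi\vert_{\ur i})\inv(1)=1$ by Lemma~\ref{unirad}. Your extra check that $Y(e_i,\,-v,\,-a)$ is well-defined is a nice touch but raises no new issues.
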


\begin{lm}
\label{y-conjugated-by-ps}
For $g\in\ps i$, $v\in V$ such that $v_{-i}=0$, $a\in R$, one has $$g\,Y(e_i,\,v,\,a)g\inv=Y(e_i,\,\phi(g)v,\,a).$$
\end{lm}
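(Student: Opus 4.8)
The plan is to reduce the desired conjugation formula to the already-established behaviour of the elementary ESD-transvections under conjugation (part~\emph{d}) of Lemma~\ref{esd-properties}), using the injectivity of $\phi$ on $\ur i$ provided by Lemma~\ref{unirad}. The key structural observation is that both sides of the claimed identity already live inside the unipotent radical $\ur i$, so it suffices to check that they have the same image under $\phi$.

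First I would verify that the right-hand side is well-defined: since $g\in\ps i$, its image $\phi(g)\in\Ep(2l,\,R)$ lies in the parabolic $\phi(\ps i)$, whose action fixes the hyperplane $e_{-i}^{\perp}$ in the relevant sense, so the $(-i)$-th coordinate of $\phi(g)v$ again vanishes, i.e. $(\phi(g)v)_{-i}=0$, and $Y(e_i,\,\phi(g)v,\,a)$ is defined. Second, the Steinberg-group Levi decomposition (the stated Lemma on $gug\inv\in\ur i$ for $g\in\ps i$, $u\in\ur i$) guarantees that the left-hand side $g\,Y(e_i,\,v,\,a)g\inv$ also lies in $\ur i$, since $Y(e_i,\,v,\,a)\in\ur i$ by definition. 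Thus both sides are elements of $\ur i$, on which $\phi$ is injective.

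It then remains to compare images under $\phi$. On the left, using that $\phi$ is a homomorphism and that $\phi Y(e_i,\,v,\,a)=T(e_i,\,v,\,a)$ by the defining property of $Y$, one computes
$$
\phi\big(g\,Y(e_i,\,v,\,a)g\inv\big)=\phi(g)\,T(e_i,\,v,\,a)\,\phi(g)\inv.
$$
Now apply part~\emph{d}) of Lemma~\ref{esd-properties} with $g$ replaced by $\phi(g)\in\Sp(V)$ to rewrite the right-hand side as $T(\phi(g)e_i,\,\phi(g)v,\,a)$. The remaining point is that $\phi(g)e_i=e_i$ for $g\in\ps i$: elements of the parabolic $\ps i$ are built from generators $X_{kh}(a)$ with $\{h,\,-k\}\not\ni i$, whose images act trivially on $e_i$ (the corresponding transvections add only multiples of $e_k$ and $e_{-h}$ with $k\neq i$ and $-h\neq i$). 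Hence $T(\phi(g)e_i,\,\phi(g)v,\,a)=T(e_i,\,\phi(g)v,\,a)$, which by definition equals $\phi Y(e_i,\,\phi(g)v,\,a)$. Since both sides of the original identity lie in $\ur i$ and have equal $\phi$-images, injectivity of $\phi|_{\ur i}$ forces them to coincide.

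I expect the main obstacle to be the bookkeeping in the verification that $\phi(g)$ fixes $e_i$ and preserves the condition $(\phi(g)v)_{-i}=0$ for all $g\in\ps i$; this is a direct consequence of the definition of $\ps i$ via the index condition $\{h,\,-k\}\not\ni i$, but it must be checked uniformly for generators of both signs, exploiting that the form satisfies $\lan e_i,\,e_j\ran=\eps i\,\delta_{i,-j}$ so that a transvection $T_{kh}$ touches the $e_i$- and $e_{-i}$-directions only when $i\in\{h,\,-k\}$. Once this is settled, the proof is immediate from Lemma~\ref{esd-properties}\emph{d}) and Lemma~\ref{unirad}.
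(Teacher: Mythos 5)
Your proof is correct and is essentially the paper's own argument: both sides lie in $\ur i$ (the right-hand side by definition, the left-hand side by the Levi decomposition), $\phi$ restricted to $\ur i$ is injective by Lemma~\ref{unirad}, and the two images coincide by Lemma~\ref{esd-properties}~\emph{d}) together with $\phi(g)e_i=e_i$; deducing $(\phi(g)v)_{-i}=0$ from invariance of the form is also exactly how the paper argues. The only blemishes are in the index bookkeeping that you yourself flag as the main obstacle: a generator $X_{kh}(a)$ of $\ps i$ satisfies $h\neq i$ and $k\neq -i$ (not $k\neq i$ and $h\neq -i$), and such a generator may perfectly well add multiples of $e_i$ (take $k=i$, $h\neq\pm i$); what makes $T_{kh}(a)$ fix $e_i$ is not that the added vectors avoid the $e_i$-direction, but that their coefficients $\lan e_{-h}a\eps{-h},\,e_i\ran$ and $\lan e_k,\,e_i\ran$ vanish precisely because $h\neq i$ and $k\neq -i$. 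Likewise, the relevant hyperplane is $e_i^{\perp}=\{w\mid w_{-i}=0\}$, not $e_{-i}^{\perp}$. These are slips in justifying true and immediately checkable facts (the paper simply asserts $T_{kh}(a)e_i=e_i$ for $i\notin\{h,-k\}$), so they do not affect the validity of your argument.
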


\begin{proof}
First, observe that since $T_{kh}(a)e_i=e_i$ for $i\not\in\{h,-k\}$ one has $\phi(g)e_i=e_i$. Thus, $$\lan e_i,\,\phi(g)v\ran=\lan \phi(g)e_i,\,\phi(g)v\ran=\lan e_i,\,v\ran=0.$$ It follows that $(\phi(g)v)_{-i}=0$ and the right hand side of the desired equation is well-defined. Finally, observe that the images of both sides under $\phi$ coincide.
\end{proof}

\begin{lm}
\label{switch}
For $j\neq-i$, $a\in R$, one has $Y(e_i,\,e_ja,\,0)=Y(e_j,\,e_ia,\,0)$.
\end{lm}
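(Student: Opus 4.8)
The plan is to upgrade the elementary identity of Lemma~\ref{esd-properties}\,c) to the Steinberg group, with everything ultimately reducing to the symmetry relation R0. On the elementary level one has $T(e_i,\,e_ja,\,0)=T(e_j,\,e_ia,\,0)$ by Lemma~\ref{esd-properties}\,c), so $\phi$ sends the two sides of the asserted identity to the same ESD-transvection. This observation alone does not close the argument, however: the element $Y(e_i,\,e_ja,\,0)$ lives in $\ur i$ whereas $Y(e_j,\,e_ia,\,0)$ lives in $\ur j$, and the injectivity afforded by Lemma~\ref{unirad} applies to one unipotent radical at a time. The real content of the proof is therefore to write each side explicitly as a Steinberg root element and to identify the two.

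First I would record well-definedness and dispose of the degenerate case. The hypothesis $j\neq-i$ gives $(e_ja)_{-i}=0$ and $(e_ia)_{-j}=0$, so both sides are defined. If $j=i$ the two sides coincide verbatim, so it remains to treat $j\neq\pm i$. In that range I would apply the decomposition of Lemma~\ref{y-decomposition} to $v=e_ja$. Since $e_ja$ has a single nonzero coordinate, one has $v_i=0$ and $\lan v_-,\,v_+\ran=0$; hence the long-root factor degenerates to $X_{i,-i}(0)=1$ and only one short-root factor survives, yielding $Y(e_i,\,e_ja,\,0)=X_{j,-i}(a\eps{i})$. The identical computation with $i$ and $j$ interchanged gives $Y(e_j,\,e_ia,\,0)=X_{i,-j}(a\eps{j})$. (Equivalently, one reads these two values off the first Corollary of Lemma~\ref{y-decomposition} after the appropriate substitution.)

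Finally I would invoke relation R0, namely $X_{pq}(c)=X_{-q,-p}(-c\eps{p}\eps{q})$, with $p=j$, $q=-i$, $c=a\eps{i}$. Using $\eps{-i}=-\eps{i}$ one computes $-c\eps{p}\eps{q}=-a\eps{i}\eps{j}\eps{-i}=a\eps{j}$, so that $X_{j,-i}(a\eps{i})=X_{i,-j}(a\eps{j})$, which is exactly the required equality. The main obstacle is conceptual rather than computational: one must resist the temptation to conclude directly from equality of $\phi$-images, recognizing instead that the two sides sit in different radicals and so must be matched through the explicit decomposition and R0. Beyond that, the only delicate points are the sign bookkeeping with the $\eps{\pm i}$ and the check that no spurious long-root factor appears, both of which are immediate once the single-nonzero-coordinate structure of $e_ja$ is used.
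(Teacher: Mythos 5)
Your proof is correct and takes essentially the same route as the paper: identify each side as a single Steinberg root element (you via Lemma~\ref{y-decomposition}, the paper via its first corollary), then equate the two by relation R0, with the case $i=j$ handled trivially. Your explicit values $Y(e_i,\,e_ja,\,0)=X_{j,-i}(a\eps{i})$ and $Y(e_j,\,e_ia,\,0)=X_{i,-j}(a\eps{j})$ in fact carry out the index and sign bookkeeping more carefully than the paper's displayed formula, and your opening remark about why equality of $\phi$-images alone does not suffice (the two sides lie in different unipotent radicals) is exactly the right conceptual point.
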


\begin{proof}
For $i=j$ the claim is obvious. Let $i\neq j$, then
$$Y(e_i,\,e_ja,\,0)=X_{-j,i}(a\eps i)=X_{-i,j}(-a\eps j)=Y(e_j,\,e_ia,\,0),$$
where the second equality is by R0.
\end{proof}

The next lemma is an analogue of Lemma~\ref{z-decomposition} for $Y(e_i,\,v,\,a)$'s.

\begin{lm}
\label{z-decomposition-for-y}
For $v\in V$ such that $v_{-j}=v_{k}=v_{-k}=0$, $k\not\in\{\pm j\}$, $a,$ $b\in R$, one has
$$
[Y(e_k,\,e_jb,\,0),\,Y(e_{-k},\,v,\,a)]=Y(e_j,\,vb\eps{k},\,ab^2)Y(e_{-k},\,-e_jab\eps{-k},\,0).
$$
\end{lm}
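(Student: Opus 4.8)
The plan is to show that both sides of the asserted identity in fact lie in the single Steinberg unipotent radical $\ur j$, on which $\phi$ is injective by Lemma~\ref{unirad}. Once this is established, it suffices to compare the $\phi$-images of the two sides, and this comparison is nothing but Lemma~\ref{z-decomposition} for the corresponding ESD-transformations, supplemented by an elementary rescaling identity. Thus no computation has to be carried out inside $\St\!\Sp(2l,\,R)$ itself.

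First I would locate the right-hand side in $\ur j$. The factor $Y(e_j,\,vb\eps{k},\,ab^2)$ lies in $\ur j$ by definition, since its first argument is the base vector $e_j$ and $(vb\eps{k})_{-j}=v_{-j}b\eps{k}=0$. For the second factor I would invoke the first corollary to Lemma~\ref{y-decomposition} to rewrite $Y(e_{-k},\,-e_jab\eps{-k},\,0)=X_{jk}(-ab)$, which again belongs to $\ur j$ because $k\neq j$. For the left-hand side, the same corollary gives $Y(e_k,\,e_jb,\,0)=X_{j,-k}(b\eps{k})\in\ur j\subseteq\ps j$, while $Y(e_{-k},\,v,\,a)\in\ps j$ by the second corollary to Lemma~\ref{y-decomposition}: this is precisely where the hypothesis $v_{-j}=0$ enters, together with $v_k=0$ and $j\neq k$. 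Writing the commutator as the product of $Y(e_k,\,e_jb,\,0)$ with the conjugate of $Y(e_k,\,e_jb,\,0)\inv$ by $Y(e_{-k},\,v,\,a)$, and applying the Levi decomposition lemma (conjugation of an element of $\ur j$ by an element of $\ps j$ stays in $\ur j$), shows that the left-hand side also belongs to $\ur j$.

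It then remains to verify the identity after applying $\phi$. On the left one gets $[T(e_k,\,e_jb,\,0),\,T(e_{-k},\,v,\,a)]$; here $v_k=v_{-k}=0$ by hypothesis, $(e_jb)_{\pm k}=0$ since $k\notin\{\pm j\}$, and $\lan e_jb,\,v\ran=b\eps{j}v_{-j}=0$, so Lemma~\ref{z-decomposition} applies and yields $T(e_jb,\,v\eps{k},\,a)\,T(e_{-k},\,-e_jab\eps{-k},\,0)$. On the right $\phi$ gives $T(e_j,\,vb\eps{k},\,ab^2)\,T(e_{-k},\,-e_jab\eps{-k},\,0)$. The two final factors coincide, and the remaining equality $T(e_jb,\,v\eps{k},\,a)=T(e_j,\,vb\eps{k},\,ab^2)$ is an instance of the rescaling identity $T(ub,\,w,\,a)=T(u,\,wb,\,ab^2)$, which follows immediately from the defining formula for ESD-transformations. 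Injectivity of $\phi$ on $\ur j$ then forces equality of the two original expressions.

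The only genuinely delicate point is this first step. Although the right-hand side is presented as a product of one generator naturally attached to $\ur j$ and one attached to $\ur{-k}$, the key observation is that the latter can be re-expressed as the generator $X_{jk}(-ab)$ of $\ur j$, and that the left-hand commutator is trapped inside $\ur j$ by the Levi property. After this recognition the statement reduces mechanically to the transformation-level Lemma~\ref{z-decomposition}, already proven by direct computation, so I expect the bookkeeping of which radical each element lives in — rather than any hard calculation — to be the main obstacle.
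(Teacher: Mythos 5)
Your proof is correct and follows essentially the same route as the paper: both sides are shown to lie in $\ur j$ (using that $Y(e_k,\,e_jb,\,0)\in\ur j$ and $Y(e_{-k},\,v,\,a)\in\ps j$, together with the Levi decomposition), and then injectivity of $\phi$ on $\ur j$ reduces everything to Lemma~\ref{z-decomposition}. The only difference is that you spell out the details the paper leaves implicit, notably the identification of the factors with elementary generators and the rescaling identity $T(ub,\,w,\,a)=T(u,\,wb,\,ab^2)$ needed to match the $\phi$-images.
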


\begin{proof}
Since $Y(e_k,\,e_jb,\,0)$ lies in $\ur j$ and $Y(e_{-k},\,v,\,a)$ lies in $\ps j$, both sides of the claimed equality lie in $\ur j$. Now it remains to use that their images in $\Ep(2l,\,R)$ coincide by Lemma~\ref{z-decomposition}.
\end{proof}

\begin{cl*}
For $v\in V$ such that $v_{-j}=v_{k}=v_{-k}=0$, $k\not\in\{\pm j\}$, $a,$ $b\in R$, one has the following decomposition
$$
Y(e_j,\,vb,\,ab^2)=[Y(e_k,\,e_jb,\,0),\,Y(e_{-k},\,v\eps{k},\,a)]Y(e_{-k},\,e_jab\eps{-k},\,0).
$$
\end{cl*}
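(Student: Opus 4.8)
The plan is to derive this corollary directly from Lemma~\ref{z-decomposition-for-y} by a purely formal rearrangement; no new computation inside $\Ep(2l,\,R)$ or $\St\!\Sp(2l,\,R)$ is required. First I would apply the lemma with $v$ replaced by $v\eps{k}$. Since multiplying the vector $v$ by the unit scalar $\eps{k}=\pm1$ does not alter which of its coordinates vanish, the hypotheses $(v\eps{k})_{-j}=(v\eps{k})_{k}=(v\eps{k})_{-k}=0$ together with $k\not\in\{\pm j\}$ continue to hold, so the lemma applies verbatim and yields
$$
[Y(e_k,\,e_jb,\,0),\,Y(e_{-k},\,v\eps{k},\,a)]=Y(e_j,\,v\eps{k}b\eps{k},\,ab^2)Y(e_{-k},\,-e_jab\eps{-k},\,0).
$$
Next I would use $\eps{k}^2=1$ to simplify the first argument on the right, namely $v\eps{k}b\eps{k}=vb$, which already produces the factor $Y(e_j,\,vb,\,ab^2)$ appearing in the statement.

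It then remains only to solve for this factor. Multiplying the displayed identity on the right by the inverse of $Y(e_{-k},\,-e_jab\eps{-k},\,0)$ and invoking the inverse formula $Y(e_{-k},\,w,\,0)\inv=Y(e_{-k},\,-w,\,0)$ from the corollary to Lemma~\ref{y-add} (applied with $w=-e_jab\eps{-k}$) gives precisely
$$
Y(e_j,\,vb,\,ab^2)=[Y(e_k,\,e_jb,\,0),\,Y(e_{-k},\,v\eps{k},\,a)]Y(e_{-k},\,e_jab\eps{-k},\,0),
$$
as claimed. Along the way I would verify that every symbol $Y$ is legitimately defined: $Y(e_j,\,vb,\,ab^2)$ requires $(vb)_{-j}=0$, which follows from $v_{-j}=0$, while $Y(e_{-k},\,\pm e_jab\eps{-k},\,0)$ requires the $k$-th coordinate of its middle argument to vanish, which holds because $k\neq j$. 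The only thing to watch in a statement of this kind is the sign and scalar bookkeeping---tracking the various factors $\eps{k}$, $\eps{-k}$ and the cancellation $\eps{k}^2=1$---but since the substantive content was already established in Lemma~\ref{z-decomposition-for-y}, I expect no genuine obstacle here.
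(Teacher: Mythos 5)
Your proposal is correct and is essentially the derivation the paper intends: the corollary is stated without proof precisely because it is the formal rearrangement you give, substituting $v\eps{k}$ for $v$ in Lemma~\ref{z-decomposition-for-y}, cancelling $\eps{k}^2=1$, and moving the factor $Y(e_{-k},\,-e_jab\eps{-k},\,0)$ to the other side via the inverse formula $Y(e_i,\,v,\,a)\inv=Y(e_i,\,-v,\,-a)$. Your checks that all the $Y$-symbols are well-defined are also exactly the right bookkeeping.
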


\begin{df}
For $i\not\in\{\pm j\}$, $\alpha\in R^\times$ define 
$$W_{ij}(\alpha)=X_{ij}(\alpha)X_{ji}(-\alpha\inv)X_{ij}(\alpha).$$ 
\end{df}

The following fact is well-known. One can prove it either using the Steinberg relations, or applying conjugation formulae obtained above.

\begin{lm}
For $i\not\in\{\pm j\}$, $k\not\in\{\pm i,\pm j\}$, $\alpha\in R^\times$, $a\in R$, one has
\begin{enumerate}
\item $\,^{W_{ij}(\alpha)}X_{kj}(a)=X_{ki}(\alpha\inv a),$
\item $\,^{W_{ij}(\alpha)}X_{k,-j}(a)=X_{k,-i}(\alpha a\eps i\eps j),$
\item $\,^{W_{ij}(\alpha)}X_{j,-j}(a)=X_{i,-i}(\alpha^2a),$
\item $\,^{W_{ij}(\alpha)}X_{-j,j}(a)=X_{-i,i}(\alpha^{-2}a).$
\end{enumerate}
\end{lm}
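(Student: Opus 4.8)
The plan is to prove all four identities directly from the Steinberg relations, treating conjugation by $W_{ij}(\alpha)=X_{ij}(\alpha)X_{ji}(-\alpha\inv)X_{ij}(\alpha)$ as three successive conjugations by the root elements $X_{ij}(\alpha)$, $X_{ji}(-\alpha\inv)$, $X_{ij}(\alpha)$. The governing principle at each stage is the standard one: to compute $\,^{X_{rs}(d)}X_{pq}(c)$ one uses R2 when the two generators commute, and otherwise produces the commutator correction via R3 (in the short-root cases) or via R4 and R5 (once the long-root root subgroups $X_{j,-j}$, $X_{-j,j}$ are involved), rewriting indices by R0 whenever a symbol must be brought into standard form. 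Before starting I would record, for the given ranges $i\not\in\{\pm j\}$ and $k\not\in\{\pm i,\pm j\}$, exactly which pairs of generators commute and which chain, and check that $l\geq3$ leaves enough room so that no undefined symbol of type $X_{tt}$ ever appears during the computation.

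For part a) I would first note that, since $k\not\in\{\pm i,\pm j\}$, relation R2 shows $X_{kj}(a)$ commutes with both copies of $X_{ij}(\alpha)$, so the outer conjugations act trivially and only the middle conjugation by $X_{ji}(-\alpha\inv)$ is effective; relation R3 then yields $[X_{kj}(a),X_{ji}(-\alpha\inv)]=X_{ki}(-a\alpha\inv)$. Pushing this correction back out through the remaining factor $X_{ij}(\alpha)$ produces, via R3, a term $X_{kj}(\cdot)$ which is annihilated by the surviving factor $X_{kj}(\mp a)$ using that $X_{ki}$ and $X_{kj}$ commute (R2). The corrections cancel pairwise and leave exactly $\,^{W_{ij}(\alpha)}X_{kj}(a)=X_{ki}(\alpha\inv a)$. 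Part b) follows the same template, except that the column $-j$ forces one first to apply R0 to rewrite $X_{k,-j}(a)$ so that an R3 chain becomes available; the chain now runs in the opposite direction, which is why the answer carries the factor $\alpha$ rather than $\alpha\inv$ and the sign $\eps i\eps j$. Alternatively, a) and b) can be obtained from Lemma~\ref{y-conjugated-by-ps} by expressing the relevant generators as $Y$-elements, but the bare relational computation is self-contained and I would keep it as the primary argument.

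Parts c) and d) are the main obstacle, and I would do them directly rather than by trying to realise $X_{j,-j}(a)$ as a single commutator (the factor $2$ in R5 obstructs that over a general ring). Here conjugating the long-root generators $X_{j,-j}(a)$ and $X_{-j,j}(a)$ through the three factors of $W_{ij}(\alpha)$ genuinely requires R4 and R5: the step $\,^{X_{ij}(\alpha)}X_{j,-j}(a)$ is \emph{not} a simple R3 chain (the side condition of R3 fails precisely because the second index is $-j$), so the symplectic relations must intervene. The delicate points are the quadratic dependence on $\alpha$ produced by iterating R5 (giving $\alpha^2$ in c) and, dually, $\alpha^{-2}$ in d)), the bookkeeping of the factor $2$ and of the signs $\eps i,\eps j$, and the repeated use of R0 to keep every intermediate symbol in defined form. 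After assembling and cancelling the correction terms one obtains $\,^{W_{ij}(\alpha)}X_{j,-j}(a)=X_{i,-i}(\alpha^2 a)$ and $\,^{W_{ij}(\alpha)}X_{-j,j}(a)=X_{-i,i}(\alpha^{-2}a)$. I expect essentially all the difficulty of the lemma to sit in these two long-root cases: parts a) and b) are routine chase-and-cancel computations, whereas c) and d) demand careful control of the coefficients and signs dictated by R4 and R5.
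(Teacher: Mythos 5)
Your proposal is correct and follows exactly the first of the two routes the paper itself indicates: the paper states this lemma without proof (``well-known\dots one can prove it either using the Steinberg relations, or applying conjugation formulae obtained above''), and your computation pushing $X_{kj}(a)$, $X_{k,-j}(a)$, $X_{j,-j}(a)$, $X_{-j,j}(a)$ through the three factors of $W_{ij}(\alpha)$ via R0, R2, R3 (for a, b) and R0, R2, R4, R5 (for c, d) is precisely that route, and it does work out with the cancellation pattern you describe. The only slight imprecision is attributing the quadratic dependence in c) to ``iterating R5'': the $\alpha^2$ actually enters through the $-ab^2$ term of R4, while R5 supplies correction terms of the form $X_{i,-i}(\pm 2a\alpha^2)$ that cancel in the final assembly — but this does not affect the validity of the argument.
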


\begin{lm}
\label{weyl-conjugation}
Consider $v\in V$ such that $v_i=v_{-i}=v_j=v_{-j}=0$, $i\not\in\{\pm j\}$, $\alpha\in R^\times$. Then
\begin{enumerate}
\item $\,^{W_{ij}(\alpha)}Y(e_j,\,v,\,a)=Y(e_i,\,v\alpha,\,\alpha^2a),$
\item $\,^{W_{ij}(\alpha)}Y(e_{-j},\,v,\,a)=Y(e_{-i},\,v\alpha\inv\eps i\eps j,\,\alpha^{-2}a).$
\end{enumerate}
\end{lm}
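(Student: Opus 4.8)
The plan is to compare canonical decompositions inside a single unipotent radical. By definition $Y(e_i,\,v\alpha,\,\alpha^2a)$ lies in $\ur i$ and $Y(e_{-i},\,v\alpha\inv\eps i\eps j,\,\alpha^{-2}a)$ lies in $\ur{-i}$, while the left hand sides are conjugates of elements of $\ur j$ and $\ur{-j}$ respectively. I would first expand $Y(e_j,\,v,\,a)$ and $Y(e_{-j},\,v,\,a)$ by Lemma~\ref{y-decomposition}: since $v_i=v_{-i}=v_j=v_{-j}=0$, the coordinates $v_{\pm i}$ and $v_{\pm j}$ vanish, so only factors $X_{k,-j}(v_k\eps j)$ (resp.\ $X_{k,j}(v_k\eps{-j})$) with $k\notin\{\pm i,\pm j\}$ survive, together with a single long root factor $X_{j,-j}(a-\lan v_-,\,v_+\ran)$ (resp.\ $X_{-j,j}(a-\lan v_-,\,v_+\ran)$). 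This vanishing is precisely the hypothesis needed to apply the preceding lemma on the conjugation of generators by $W_{ij}(\alpha)$, every surviving index $k$ being admissible for its items. Consequently the conjugate lands in $\ur i$ (resp.\ $\ur{-i}$), and by Lemma~\ref{unirad} it even suffices to compare images under $\phi$; but in fact the two canonical decompositions will coincide verbatim.

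For a) I would conjugate term by term: item c) of the preceding lemma sends the long root factor to $X_{i,-i}(\alpha^2(a-\lan v_-,\,v_+\ran))$, and item b) sends each $X_{k,-j}(v_k\eps j)$ to $X_{k,-i}(\alpha v_k\eps i)$, the spurious sign collapsing since $\eps j\eps j=1$. On the other hand, expanding the target $Y(e_i,\,v\alpha,\,\alpha^2a)$ by Lemma~\ref{y-decomposition} reproduces exactly these factors in the same order, the long root coefficient being $\alpha^2a-\lan(v\alpha)_-,\,(v\alpha)_+\ran=\alpha^2(a-\lan v_-,\,v_+\ran)$ because a scalar leaves the form as its square. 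The two products then agree, which gives a).

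Part b) follows the same scheme using items a) and d): the long root factor maps to $X_{-i,i}(\alpha^{-2}(a-\lan v_-,\,v_+\ran))$ and each $X_{k,j}(v_k\eps{-j})$ maps to $X_{k,i}(\alpha\inv v_k\eps{-j})$. Expanding the target $Y(e_{-i},\,v\alpha\inv\eps i\eps j,\,\alpha^{-2}a)$ and simplifying via $\eps{-i}=-\eps i$, $\eps{-j}=-\eps j$ and $(\eps i\eps j)^2=1$ should match the short root factors as $X_{k,i}(-\alpha\inv v_k\eps j)$ and the long root coefficient as $\alpha^{-2}(a-\lan v_-,\,v_+\ran)$. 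I expect the sign bookkeeping of b) to be the only genuine obstacle: the scalar $\alpha\inv\eps i\eps j$ in the target is forced exactly so that the two independent sign flips produced by passing from the long root direction $e_{-j}$ to $e_{-i}$ cancel against it, and one must simultaneously track the quadratic rescaling $\alpha^{-2}$ of the symplectic form sitting in the long root coordinate. Everything else is a mechanical transcription of the generator-level conjugation formulae through Lemma~\ref{y-decomposition}.
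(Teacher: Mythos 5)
Your proof is correct and follows essentially the same route as the paper: expand $Y(e_j,\,v,\,a)$ via Lemma~\ref{y-decomposition}, discard the factors with indices $\pm i$, $\pm j$ (which vanish by hypothesis), conjugate the remaining factors term by term using the preceding lemma, and recognize the result as the Lemma~\ref{y-decomposition} expansion of the right-hand side. The paper dismisses part b) with ``similarly,'' whereas you carry out the sign bookkeeping ($\eps{-i}=-\eps i$, $\eps{-j}=-\eps j$, $(\eps i\eps j)^2=1$) explicitly and correctly, so nothing is missing.
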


\begin{proof}
By Lemma~\ref{y-decomposition} one has
\begin{multline*}
Y(e_j,\,v,\,a)=X_{j,-j}(a-\lan v_-,\,v_+\ran)\cdot\\ \cdot X_{-l,-j}(v_{-l}\eps j)\ldots X_{-1,-j}(v_{-1}\eps j)X_{1,-j}(v_1\eps j)\ldots X_{l,-j}(v_l\eps j).
\end{multline*}
Observe that the elements of the form $X_{-i,-j}(v_{-i}\eps j)$ and $X_{i,-j}(v_{i}\eps j)$ may be omitted in this product since $v_i=v_{-i}=0$. Thus, by the previous lemma we have
\begin{align*}
&\!\,^{W_{ij}(\alpha)}Y(e_j,\,v,\,a)=\\
&=\!\,^{W_{ij}(\alpha)}X_{j,-j}(a-\lan v_-,\,v_+\ran)\cdot\\
&\cdot\!\,^{W_{ij}(\alpha)}\big(X_{-l,-j}(v_{-l}\eps j)\ldots X_{-1,-j}(v_{-1}\eps j)X_{1,-j}(v_1\eps j)\ldots X_{l,-j}(v_l\eps j)\big)=\\
&=X_{i,-i}((a-\lan v_-,\,v_+\ran)\alpha^2)\cdot\\
&\cdot X_{-l,-i}(v_{-l}\alpha\eps i)\ldots X_{-1,-i}(v_{-1}\alpha\eps i)X_{1,-i}(v_1\alpha\eps i)\ldots X_{l,-i}(v_l\alpha\eps i)=\\
&=Y(e_i,\,v\alpha,\,\alpha^2a).
\end{align*}
Similarly, one can prove {\it b}).
\end{proof}

\section{Definition of ESD-generators for vectors having zeros}

In the preceding section we have in particular proven the decomposition of the type stated in Lemma~\ref{z-decomposition} for the Steinberg ESD-generators $X(u,\,v,\,a)$, in the special case where $u=e_i$. Now, we intend to use this decomposition to define the ESD-generators in larger generality, where $u$ is not necessarily equal to a base vector, but has at least one pair of zero coordinates. As always, the main technical issue is to verify the correctness of such a definition.

In the sequel we assume that if the converse is not specified explicitly, then indexes denoted by different letters are neither equal, nor have a zero sum, e.g., $i\not\in\{\pm j\}$.

\begin{df}
For $u$, $v\in V$ such that $\lan u,\,v\ran=0$, $u_i=u_{-i}=v_i=v_{-i}=0$, $a\in R$ denote
$$
Y_{(i)}(u,\,v,\,a)=[Y(e_i,\,u,\,0),\,Y(e_{-i},\,v\eps{i},\,a)]Y(e_{-i},\,ua\eps{-i},\,0).
$$
\end{df}

\begin{rk}
Due to Lemma~\ref{z-decomposition} one has $\phi\big(Y_{(i)}(u,\,v,\,a)\big)=T(u,\,v,\,a)$.
\end{rk}

\begin{rk}
For $v\in V$ with $v_{-j}=v_i=v_{-i}=0$, $a\in R$ one has
$$
Y_{(i)}(e_j,\,v,\,a)=Y(e_j,\,v,\,a)
$$
by Lemma~\ref{z-decomposition-for-y}. Similarly, one can obtain the following result.
\end{rk}

\begin{lm}
For $v\in V$ with $v_{-j}=v_i=v_{-i}=0$, $b\in R$ one has
$$
Y_{(i)}(v,\,e_jb,\,0)=Y(e_j,\,vb,\,0).
$$
\end{lm}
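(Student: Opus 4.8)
The plan is to unwind the definition of $Y_{(i)}$ and then reduce to Lemma~\ref{z-decomposition-for-y}. First I would check that the left-hand side is legitimate: since $v_{-j}=0$ one has $\lan v,\,e_jb\ran=v_{-j}b\eps{-j}=0$, while $(e_jb)_i=(e_jb)_{-i}=0$ because $i\not\in\{\pm j\}$, so the hypotheses in the definition of $Y_{(i)}$ are satisfied. Substituting $u=v$, second argument $e_jb$ and $a=0$ into that definition, the trailing factor becomes $Y(e_{-i},\,v\cdot0\cdot\eps{-i},\,0)=Y(e_{-i},\,0,\,0)=1$, so that
$$Y_{(i)}(v,\,e_jb,\,0)=[Y(e_i,\,v,\,0),\,Y(e_{-i},\,e_jb\eps{i},\,0)].$$

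At this point Lemma~\ref{z-decomposition-for-y} cannot be applied verbatim: in its statement the scalar multiple of a base vector occupies the \emph{first} entry of the commutator, whereas here it sits in the second. This mismatch is the only genuinely delicate step, and I would resolve it using the identity $[x,\,y]=[y,\,x]\inv$, which gives
$$Y_{(i)}(v,\,e_jb,\,0)=[Y(e_{-i},\,e_jb\eps{i},\,0),\,Y(e_i,\,v,\,0)]\inv.$$
Now the inner commutator has the right shape, and I would invoke Lemma~\ref{z-decomposition-for-y} with $k=-i$, with $b\eps i$ playing the role of the scalar, and with $a=0$; its conditions $v_{-j}=v_{-i}=v_i=0$ and $-i\not\in\{\pm j\}$ hold by hypothesis.

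Because $a=0$, the second factor produced by that lemma collapses to $Y(e_i,\,0,\,0)=1$, and the first factor becomes $Y(e_j,\,v\cdot b\eps i\cdot\eps{-i},\,0)$. Here I would use the sign relation $\eps i\eps{-i}=-1$ to simplify this to $Y(e_j,\,-vb,\,0)$, so that the inner commutator equals $Y(e_j,\,-vb,\,0)$. Finally, inverting via the corollary to Lemma~\ref{y-add}, namely $Y(e_j,\,w,\,c)\inv=Y(e_j,\,-w,\,-c)$, yields
$$Y_{(i)}(v,\,e_jb,\,0)=Y(e_j,\,-vb,\,0)\inv=Y(e_j,\,vb,\,0),$$
as claimed. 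The main obstacle is purely formal: the commutator flip needed to match the orientation of Lemma~\ref{z-decomposition-for-y}, together with the careful bookkeeping of the signs $\eps i$ and $\eps{-i}$; once these are handled, the remaining manipulations are direct substitutions into formulae already established.
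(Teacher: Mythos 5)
Your proof is correct, but it takes a genuinely different route from the paper's. The paper disposes of this lemma in one line: since $Y(e_i,\,v,\,0)\in\ps j$ (because $v_{-i}=v_{-j}=0$) while $Y(e_{-i},\,e_jb\eps i,\,0)$ lies in $\ur j$, the Levi decomposition places the whole commutator, and hence the left-hand side, inside $\ur j$; the right-hand side lies in $\ur j$ by definition, and since $\phi\vert_{\ur j}$ is injective (Lemma~\ref{unirad}) and both sides have the same image $T(v,\,e_jb,\,0)=T(e_j,\,vb,\,0)$ in $\Ep(2l,\,R)$ (Lemma~\ref{z-decomposition} together with part~c) of Lemma~\ref{esd-properties}), they coincide. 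You instead stay entirely inside the Steinberg group: you unwind the definition of $Y_{(i)}$, flip the commutator via $[x,\,y]=[y,\,x]\inv$ so that Lemma~\ref{z-decomposition-for-y} applies with $k=-i$ and scalar $b\eps i$, and then use $\eps i\eps{-i}=-1$ together with $Y(e_j,\,w,\,c)\inv=Y(e_j,\,-w,\,-c)$ to absorb the resulting sign. Your substitutions and bookkeeping are all correct: the hypotheses $v_{-j}=v_i=v_{-i}=0$ and $-i\not\in\{\pm j\}$ do match the lemma's conditions, and taking $a=0$ indeed kills the trailing factors on both sides. What the paper's argument buys is brevity and uniformity, as it is the same ``both sides lie in a unipotent radical where $\phi$ is injective'' pattern used throughout \S2--\S3; what yours buys is that the identity is derived formally from an already-established Steinberg-level identity, with no fresh appeal to $\phi$ or to the elementary group --- though Lemma~\ref{z-decomposition-for-y} itself was proved by the injectivity trick, so the two arguments ultimately rest on the same foundation.
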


\begin{proof}
Since $Y(e_{i},\,v,\,0)\in\ps j$, both sides lie in $\ur j$.
\end{proof}

\begin{lm}
\label{z-correctness}
Consider $u$, $v\in V$ such that $\lan u,\,v\ran=0$, $u_i=u_{-i}=u_{j}=u_{-j}=0$ and $v_i=v_{-i}=v_j=v_{-j}=0$, and $a\in R$. Then one has
$$
Y_{(i)}(u,\,v,\,a)=Y_{(j)}(u,\,v,\,a).
$$
\end{lm}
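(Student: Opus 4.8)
The plan is to exploit the Weyl element $W_{ij}(1)$, whose image in $\Ep(2l,\,R)$ is the reflection interchanging $e_{\pm i}$ with $e_{\pm j}$ and fixing every $e_m$ with $m\neq\pm i,\,\pm j$. Since $u$ and $v$ vanish at the coordinates $\pm i$ and $\pm j$, this reflection fixes $u$ and $v$, so conjugation by $W_{ij}(1)$ should carry the $i$-flavoured expression $Y_{(i)}(u,\,v,\,a)$ to the $j$-flavoured one. Both $Y_{(i)}(u,\,v,\,a)$ and $Y_{(j)}(u,\,v,\,a)$ already have the same image $T(u,\,v,\,a)$ under $\phi$ (by the Remark following the definition), so the only thing to rule out is a discrepancy lying in $\Ker\phi$; the Weyl element is precisely the device I would use to detect and kill it.

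First I would establish the conjugation identity
$$\,^{W_{ij}(1)}Y_{(j)}(u,\,v,\,a)=Y_{(i)}(u,\,v,\,a).$$
Since conjugation is an automorphism, it suffices to apply Lemma~\ref{weyl-conjugation} to each of the three factors $Y(e_j,\,u,\,0)$, $Y(e_{-j},\,v\eps j,\,a)$ and $Y(e_{-j},\,ua\eps{-j},\,0)$ in the defining expression for $Y_{(j)}(u,\,v,\,a)$. With $\alpha=1$ part~a) sends $Y(e_j,\,u,\,0)$ to $Y(e_i,\,u,\,0)$, while part~b) sends $Y(e_{-j},\,w,\,c)$ to $Y(e_{-i},\,w\eps i\eps j,\,c)$; the required sign bookkeeping is $\eps j\eps i\eps j=\eps i$ for the middle factor and $\eps{-j}\eps i\eps j=\eps{-i}$ for the correction term, which reproduces exactly the defining expression for $Y_{(i)}(u,\,v,\,a)$. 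Next I would record the easy fact that $W_{ij}(1)$ centralises the subgroup $G:=\lan X_{mn}(c)\mid m,\,n\neq\pm i,\,\pm j\ran$: by R2 each of $X_{ij}(\cdot)$ and $X_{ji}(\cdot)$ commutes with every generator $X_{mn}(c)$ of $G$, since none of $\pm i,\,\pm j$ occurs among $m,\,n$, hence so does the word $W_{ij}(1)=X_{ij}(1)X_{ji}(-1)X_{ij}(1)$. Granting that $Y_{(j)}(u,\,v,\,a)$ lies in $G$, the conjugation identity then forces $Y_{(i)}(u,\,v,\,a)=\,^{W_{ij}(1)}Y_{(j)}(u,\,v,\,a)=Y_{(j)}(u,\,v,\,a)$, as desired.

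The hard part, which I expect to be the main obstacle, is precisely the membership $Y_{(j)}(u,\,v,\,a)\in G$. By Lemma~\ref{y-decomposition} the three factors are words in the root elements $X_{m,\pm j}(\cdot)$ with $m\neq\pm i,\,\pm j$, together with the long-root elements $X_{j,-j}(\cdot)$ and $X_{-j,j}(\cdot)$; in particular no index $i$ occurs, which is the trivial half of the claim. What must be shown is that in the full product $Y_{(j)}(u,\,v,\,a)$ all letters involving $\pm j$ cancel, so that it becomes a word in the $X_{mn}(\cdot)$ with $m,\,n\neq\pm i,\,\pm j$. This is a Levi-decomposition statement in the Steinberg group: the commutator of the opposite unipotent pieces $Y(e_j,\,u,\,0)\in\ur j$ and $Y(e_{-j},\,v\eps j,\,a)\in\ur{-j}$ has to be pushed, via R0--R5 and Lemmas~\ref{heis} and~\ref{z-decomposition-for-y}, into the form $(\text{element of }G)\cdot(\text{element of }\ur{-j})$, whereupon the correction factor $Y(e_{-j},\,ua\eps{-j},\,0)$ cancels the $\ur{-j}$-tail and, crucially, no residual $X_{j,-j}(\cdot)$ or $X_{-j,j}(\cdot)$ survives. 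This cancellation genuinely cannot be shortcut by injectivity of $\phi$ on the Levi $\ls j$: the $\SL_2$-block $\lan X_{j,-j},\,X_{-j,j}\ran$ (and, when $l=3$, the whole of $G$, which is itself such a block) may have non-central $\Kt$, so the vanishing of the $\pm j$-part has to be read off from the relations themselves. An alternative route avoiding the Weyl element altogether is to reduce $u$ and $v$ to single base vectors by establishing the additivity of $Y_{(i)}$ in each argument and then invoking the base-vector identities $Y_{(i)}(e_j,\,v,\,a)=Y(e_j,\,v,\,a)$ and $Y_{(i)}(v,\,e_jb,\,0)=Y(e_j,\,vb,\,0)$ recorded above, which are manifestly independent of the auxiliary index; there the same difficulty simply migrates into proving the additivity with the correct long-root $\lan\,,\,\ran$-corrections.
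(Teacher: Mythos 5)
Your covariance step is exactly the paper's: with $\alpha=1$, Lemma~\ref{weyl-conjugation} applied to the three factors of $Y_{(j)}(u,\,v,\,a)$ gives $\,^{W_{ij}(1)}Y_{(j)}(u,\,v,\,a)=Y_{(i)}(u,\,v,\,a)$, with the same sign bookkeeping. The gap is the other half, the commutation $[W_{ij}(1),\,Y_{(j)}(u,\,v,\,a)]=1$, which you reduce to the membership $Y_{(j)}(u,\,v,\,a)\in G=\lan X_{mn}(c)\mid m,\,n\neq\pm i,\,\pm j\ran$ and then explicitly leave open. This is not merely ``the main obstacle'': the membership is false in general, so this route cannot be completed. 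Take $l=3$, let $k>0$ be the remaining index, so that the hypotheses force $u$ and $v$ to be supported on $\pm k$. Every element of $G=\lan X_{k,-k}(c),\,X_{-k,k}(c)\ran$ maps under $\phi$ to a transformation fixing all $e_m$, $m\neq\pm k$, and acting on the plane $e_kR\oplus e_{-k}R$ as an element of $\E(2,\,R)$, the subgroup generated by the elementary unitriangular $2\times2$ matrices. But for $R=\mathbb{Q}[x,\,y]$ and $u=e_kx+e_{-k}y$ the element $\phi\big(Y_{(j)}(u,\,0,\,1)\big)=T(u,\,0,\,1)$ acts on this plane as
$$
\begin{pmatrix}1-xy & x^2\\ -y^2 & 1+xy\end{pmatrix},
$$
a Cohn matrix, which famously does not lie in $\E(2,\,\mathbb{Q}[x,\,y])$. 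Hence $Y_{(j)}(u,\,0,\,1)\notin G$, and no amount of manipulation with R0--R5 can place it there.

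The paper proves the commutation without any such membership, by conjugating in the opposite direction. Since $u$ and $v$ vanish at $\pm i$, each factor $Y(e_j,\,u,\,0)$, $Y(e_{-j},\,v\eps j,\,a)$, $Y(e_{-j},\,ua\eps{-j},\,0)$ lies in $\ps i\cap\ps{-i}$, hence so does $g:=Y_{(j)}(u,\,v,\,a)$. Writing $X_{ij}(1)=Y(e_i,\,e_{-j}\eps{-j},\,0)$ and, via R0, $X_{ji}(-1)=X_{-i,-j}(\eps i\eps j)=Y(e_{-i},\,e_j\eps i,\,0)$, one can apply Lemma~\ref{y-conjugated-by-ps} with $g$ as the \emph{conjugating} element:
$$
\!\,^{g}X_{ij}(1)=Y\big(e_i,\,T(u,\,v,\,a)e_{-j}\eps{-j},\,0\big)=X_{ij}(1),
$$
because $T(u,\,v,\,a)$ fixes $e_{\pm j}$ (here $u_{\pm j}=v_{\pm j}=0$ is used), and similarly $\!\,^{g}X_{ji}(-1)=X_{ji}(-1)$. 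Thus $g$ commutes with both factors of $W_{ij}(1)$, hence with $W_{ij}(1)$, and the lemma follows from your covariance step. Note this uses only injectivity of $\phi$ on unipotent radicals (Lemmas~\ref{unirad} and~\ref{y-conjugated-by-ps}), precisely the tool you correctly observed is unavailable on Levi or rank-one subgroups; the trick is to let $Y_{(j)}(u,\,v,\,a)$ act on the generators of $W_{ij}(1)$ rather than the other way around. Your alternative sketch via additivity has the same defect: additivity of $Y_{(i)}$ in the \emph{first} argument is essentially Lemma~\ref{short-is-three-long}, which the paper can only prove later and whose proof rests on the present lemma, so that route is circular as it stands.
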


\begin{proof}
Firstly, observe that since $u_i=u_{-i}=v_i=v_{-i}=0$, one has
$$Y(e_j,\,u,\,0),\ Y(e_{-j},\,v\eps{j},\,a),\ Y(e_{-j},\,ua\eps{-j},\,0)\in\ps i\cap\ps{-i}.$$ 
Thus, $Y_{(j)}(u,\,v,\,a)$ also belongs to $\ps i\cap\ps{-i}$. Then, 
\begin{multline*}
\!\,^{Y_{(j)}(u,\,v,\,a)}X_{ij}(1)=\!\,^{Y_{(j)}(u,\,v,\,a)}Y(e_i,\,e_{-j}\eps{-j},\,0)=Y(e_i,\,e_{-j}\eps{-j},\,0)=X_{ij}(1),
\end{multline*}
or, what is the same, $[Y_{(j)}(u,\,v,\,a),\,X_{ij}(1)]=1$. Similarly, one can check that $[Y_{(j)}(u,\,v,\,a),\,X_{ji}(-1)]=1$. Thus, $Y_{(j)}(u,\,v,\,a)$ commutes with $W_{ij}(1)$ and using Lemma~\ref{weyl-conjugation} one gets
\begin{multline*}
Y_{(j)}(u,\,v,\,a)=\!\,^{W_{ij}(1)}Y_{(j)}(u,\,v,\,a)=\\=\!\,^{W_{ij}(1)}\big([Y(e_j,\,u,\,0),\,Y(e_{-j},\,v\eps{j},\,a)]Y(e_{-j},\,ua\eps{-j},\,0)\big)=\\=[Y(e_i,\,u,\,0),\,Y(e_{-i},\,v\eps{i},\,a)]Y(e_{-i},\,ua\eps{-i},\,0)=Y_{(i)}(u,\,v,\,a).
\end{multline*}
\end{proof}

\begin{rk}
For $u$ and $v$ having only one pair of zero coordinates we have not yet proven that  $Y_{(i)}(u,\,v,\,a)=Y_{(-i)}(u,\,v,\,a)$. This is why for the time being we have to keep the index in the notation for our ESD-generators. 
\end{rk}

\begin{df}
Define the ({\it Steinberg}) {\it Levi subgroup} $\ls i=\ps i\cap\ps{-i}$.
\end{df}

\begin{rk}
Observe that for $g\in\ls i$ one has $\phi(g)e_i=e_i$ and $\phi(g)e_{-i}=e_{-i}$. Indeed, take $h\neq k$, then the first equality holds for $g=X_{kh}(a)$ with $\{-k,h\}\not\ni i$ and the second one for $g=X_{kh}(a)$ with $\{-k,h\}\not\ni -i$.
\end{rk}

\begin{lm}
For $u$, $v\in V$ such that $\lan u,\,v\ran=0$, $u_i=u_{-i}=v_i=v_{-i}=0$, $a\in R$, $g\in\ls i$, one has
$$
g\,Y_{(i)}(u,\,v,\,a)g\inv=Y_{(i)}(\phi(g)u,\,\phi(g)v,\,a).
$$
\end{lm}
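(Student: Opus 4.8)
The plan is to conjugate the defining expression for $Y_{(i)}(u,v,a)$ factor by factor, reducing the whole computation to Lemma~\ref{y-conjugated-by-ps}. Recall that by definition
\begin{equation*}
Y_{(i)}(u,v,a)=[Y(e_i,u,0),\,Y(e_{-i},v\eps i,a)]\,Y(e_{-i},ua\eps{-i},0),
\end{equation*}
and that conjugation by $g$ distributes over both the commutator and the product. Hence it suffices to compute $\!\,^{g}Y$ for each of the three factors separately and then reassemble.

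Since $g\in\ls i=\ps i\cap\ps{-i}$, in particular $g\in\ps i$ and $g\in\ps{-i}$. The first factor has $u_{-i}=0$, so Lemma~\ref{y-conjugated-by-ps} yields $\!\,^{g}Y(e_i,u,0)=Y(e_i,\phi(g)u,0)$. For the two factors based at $e_{-i}$ I would apply the same lemma with $i$ replaced by $-i$, using $g\in\ps{-i}$; the vanishing conditions required there are $(v\eps i)_i=0$ and $(ua\eps{-i})_i=0$, both of which hold because $v_i=u_i=0$. Pulling the scalars $\eps i$ and $a\eps{-i}$ through the $R$-linear map $\phi(g)$, one obtains
\begin{equation*}
\!\,^{g}Y_{(i)}(u,v,a)=[Y(e_i,\phi(g)u,0),\,Y(e_{-i},(\phi(g)v)\eps i,a)]\,Y(e_{-i},(\phi(g)u)a\eps{-i},0),
\end{equation*}
which is exactly the right-hand side of the definition of $Y_{(i)}(\phi(g)u,\phi(g)v,a)$.

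Before this can be read as the desired identity, one must check that the right-hand side is actually defined, i.e. that the pair $\phi(g)u$, $\phi(g)v$ meets the hypotheses in the definition of $Y_{(i)}$. Orthogonality is immediate from $\phi(g)\in\Sp(V)$, namely $\lan\phi(g)u,\phi(g)v\ran=\lan u,v\ran=0$. For the coordinate conditions I would invoke the remark preceding the lemma, that $g\in\ls i$ forces $\phi(g)e_i=e_i$ and $\phi(g)e_{-i}=e_{-i}$, and express each coordinate as a value of the form: for instance
\begin{equation*}
(\phi(g)u)_i=\eps{-i}\lan e_{-i},\phi(g)u\ran=\eps{-i}\lan\phi(g)e_{-i},\phi(g)u\ran=\eps{-i}\lan e_{-i},u\ran=u_i=0,
\end{equation*}
and symmetrically for the $-i$ coordinate and for $v$. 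Thus $(\phi(g)u)_{\pm i}=(\phi(g)v)_{\pm i}=0$, so $Y_{(i)}(\phi(g)u,\phi(g)v,a)$ is a legitimate symbol.

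The argument is mechanical once these reductions are in place; the only point demanding genuine care is the bookkeeping of vanishing coordinates, which is needed twice over --- first to license the application of Lemma~\ref{y-conjugated-by-ps} to each of the three factors, and then to guarantee well-definedness of the resulting expression. This is precisely where the hypothesis $g\in\ls i$, rather than merely $g\in\ps i$, becomes indispensable: it is exactly what makes $\phi(g)$ fix both $e_i$ and $e_{-i}$, and hence preserve the two pairs of zero coordinates that the definition of $Y_{(i)}$ requires.
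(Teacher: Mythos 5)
Your proof is correct and follows essentially the same route as the paper: conjugating the defining expression factor by factor via Lemma~\ref{y-conjugated-by-ps} (applied for both $\ps i$ and $\ps{-i}$), with the well-definedness of $Y_{(i)}(\phi(g)u,\,\phi(g)v,\,a)$ handled exactly as in the paper's surrounding remarks, using that $\phi(g)$ fixes $e_{\pm i}$ and preserves the form.
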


\begin{rk}
Since $\lan\phi(g)u,\,e_i\ran=\lan\phi(g)u,\,\phi(g)e_i\ran=\lan u,\,e_i\ran=0$, one can see that $(\phi(g)u)_{-i}=0$ and similarly $(\phi(g)u)_{i}=(\phi(g)v)_{-i}=(\phi(g)v)_{i}=0$. Thus, $Y_{(i)}(\phi(g)u,\,\phi(g)v,\,a)$ is well-defined.
\end{rk}

\begin{proof}
Using $g\in\ps i\cap\ps{-i}$ and Lemma~\ref{y-conjugated-by-ps} one gets
\begin{multline*}
\!\,^gY_{(i)}(u,\,v,\,a)=\!\,^g\big([Y(e_i,\,u,\,0),\,Y(e_{-i},\,v\eps i,\,a)]Y(e_{-i},\,ua\eps{-i},\,0)\big)=\\=[Y(e_i,\,\phi(g)u,\,0),\,Y(e_{-i},\,\phi(g)v\eps i,\,a)]Y(e_{-i},\,\phi(g)ua\eps{-i},\,0)=\\=Y_{(i)}(\phi(g)u,\,\phi(g)v,\,a).
\end{multline*}
\end{proof}

\begin{rk}
Lemma~\ref{unipotent-decomposition} implies that for $v$ such that $v_{-i}=v_j=v_{-j}=0$, one has $Y(e_i,\,v,\,a)\in\ls j$. 
\end{rk}

\begin{rk}
For $w$ orthogonal to both $u$ and $v$, $\lan u,\,w\ran=\lan v,\,w\ran=0$, one has $T(u,\,v,\,a)w=w$. Below, in the computations this fact is frequently used without any special reference. 
\end{rk}

\begin{lm}
\label{z-additivity}
For $u$, $v\in V$ such that $u_i=u_{-i}=u_j=u_{-j}=0$, $v_i=v_{-i}=0$ and $\lan u,\,v\ran=0$, and for $a$, $b\in R$, one has
$$
Y_{(i)}(u,\,v,\,a)Y_{(i)}(u,\,e_jb,\,0)=Y_{(i)}(u,\,v+e_jb,\,a+v_{-j}b\eps{-j}).
$$
\end{lm}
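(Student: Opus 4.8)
The plan is to peel off the common factor built from $Y(e_i,\,u,\,0)$ and reduce the whole identity to a single commutator computation inside $\ur{-i}$, on which $\phi$ is injective by Lemma~\ref{unirad}. First I would record that both sides are well-defined: since $u_i=u_{-i}=0$, $j\notin\{\pm i\}$ and $u_{-j}=0$, the vectors $e_jb$ and $v+e_jb$ inherit the vanishing $\pm i$-coordinates of $v$ and stay orthogonal to $u$, so $Y_{(i)}(u,\,e_jb,\,0)$ and $Y_{(i)}(u,\,v+e_jb,\,a+v_{-j}b\eps{-j})$ make sense. Note also that $\lan v,\,e_jb\ran=v_{-j}b\eps{-j}$, which is exactly the scalar correction on the right.

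Next I would unfold the definitions. Write $p=Y(e_i,\,u,\,0)$, $q=Y(e_{-i},\,v\eps i,\,a)$, $q'=Y(e_{-i},\,e_jb\eps i,\,0)$, $r=Y(e_{-i},\,ua\eps{-i},\,0)$. By the lemma preceding Lemma~\ref{z-correctness}, applied with $u$ in the role of its $v$ (legitimate since $u_{-j}=u_i=u_{-i}=0$), one has $Y_{(i)}(u,\,e_jb,\,0)=[p,\,q']=Y(e_j,\,ub,\,0)=:Z$, while Lemma~\ref{y-add} inside $\ur{-i}$ gives $qq'=Y(e_{-i},\,(v+e_jb)\eps i,\,a+v_{-j}b\eps{-j})$. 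Hence the right-hand side equals $[p,\,qq']\,r''$ with $r''=Y(e_{-i},\,u(a+v_{-j}b\eps{-j})\eps{-i},\,0)$. Applying the elementary identity $[p,\,qq']=[p,\,q]\,{}^{q}[p,\,q']$ and cancelling the common factor $[p,\,q]$, the claim reduces to the single relation $r\,Z={}^{q}Z\,r''$.

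The heart of the argument is this last relation. Rearranging, it is equivalent to $[c\inv,\,Z\inv]=s$, where $c=q\inv r=Y(e_{-i},\,ua\eps{-i}-v\eps i,\,-a)\in\ur{-i}$ and $s=r\inv r''=Y(e_{-i},\,uv_{-j}b\eps{-j}\eps{-i},\,0)\in\ur{-i}$, both simplifications coming from Lemma~\ref{y-add} together with $\lan u,\,u\ran=0$ and $\lan u,\,v\ran=0$. The crucial observation is that $Z\inv\in\ps{-i}$: decomposing $Z=Y(e_j,\,ub,\,0)$ by Lemma~\ref{y-decomposition}, the only potentially offending factors $X_{\pm i,\,-j}$ carry coefficients proportional to $u_{\pm i}=0$ and are therefore absent, while every remaining factor visibly lies in $\ps{-i}$. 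By the Levi decomposition lemma, $Z\inv$ then normalises $\ur{-i}$, so $[c\inv,\,Z\inv]=c\inv\cdot{}^{Z\inv}c$ lies in $\ur{-i}$.

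Since both $[c\inv,\,Z\inv]$ and $s$ lie in $\ur{-i}$, it suffices to compare their images under the injective map $\phi|_{\ur{-i}}$. Using the conjugation formula Lemma~\ref{esd-properties}\,(d), $\phi(Z\inv)=T(e_j,\,-ub,\,0)$ fixes $e_{-i}$ and sends $ua\eps{-i}-v\eps i$ to $ua\eps{-i}-v\eps i+ubv_{-j}\eps j\eps i$ (all other cross terms vanish because $u_{\pm j}=u_{\pm i}=0$ and $\lan u,\,v\ran=0$); then Lemma~\ref{esd-properties}\,(b) collapses the resulting product to $T(e_{-i},\,ubv_{-j}\eps j\eps i,\,0)$. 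Finally $\eps j\eps i=\eps{-j}\eps{-i}$ identifies this with $\phi(s)=T(e_{-i},\,uv_{-j}b\eps{-j}\eps{-i},\,0)$, completing the proof. The main obstacle is precisely the verification that the relevant commutator lands in $\ur{-i}$; once the membership $Z\inv\in\ps{-i}$ is secured, the remainder is a routine ESD computation.
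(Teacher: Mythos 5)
Your proof is correct and follows essentially the same route as the paper: the same unfolding of the right-hand side, the merge of $Y(e_{-i},\,v\eps i,\,a)$ and $Y(e_{-i},\,e_jb\eps i,\,0)$ via Lemma~\ref{y-add}, the identity $[p,\,qq']=[p,\,q]\cdot{}^q[p,\,q']$, the identification $[p,\,q']=Y(e_j,\,ub,\,0)$, and the decisive membership $Y(e_j,\,ub,\,0)\in\ps{-i}$. The only difference is the endgame bookkeeping: you cancel $[p,\,q]$ and verify the residual identity as a single commutator relation inside $\ur{-i}$ by direct $\phi$-injectivity (Lemma~\ref{unirad}), whereas the paper stays at the level of the $Y$-calculus, evaluating ${}^qY(e_j,\,ub,\,0)$ via Lemmas~\ref{y-conjugated-by-ps} and \ref{y-add} and then recombining the $\ur{-i}$-factors --- the two computations are equivalent, since those lemmas were themselves proved by the injectivity argument you use.
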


\begin{proof}
Start with the right-hand side
\begin{multline*}
Y_{(i)}(u,\,v+e_jb,\,a+v_{-j}b\eps{-j})=\\
=[Y(e_i,\,u,\,0),\,Y(e_{-i},\,(v+e_jb)\eps{i},\,a+v_{-j}b\eps{-j})]Y(e_{-i},\,u(a+v_{-j}b\eps{-j})\eps{-i},\,0).
\end{multline*}
Decompose $Y(e_{-i},\,(v+e_jb)\eps{i},\,a+v_{-j}b\eps{-j})$ inside the commutator and use the familiar identity $[a,\,bc]=[a,\,b]\cdot\,^b[a,\,c]$ to obtain
\begin{multline*}
[Y(e_i,\,u,\,0),\,Y(e_{-i},\,(v+e_jb)\eps{i},\,a+v_{-j}b\eps{-j})]=\\
=[Y(e_i,\,u,\,0),\,Y(e_{-i},\,v\eps{i},\,a)Y(e_{-i},\,e_jb\eps{i},\,0)]=\\
=[Y(e_i,\,u,\,0),\,Y(e_{-i},\,v\eps{i},\,a)]\cdot\,^{Y(e_{-i},\,v\eps{i},\,a)}Y(e_j,\,ub,\,0).
\end{multline*}
Observe that in general $Y(e_{-i},\,v\eps{i},\,a)$ does not lie in $\ps j$, but $Y(e_j,\,ub,\,0)$ always lies in $\ps{-i}$. So that we can compute the conjugate as follows
\begin{multline*}
\!\,^{Y(e_{-i},\,v\eps{i},\,a)}Y(e_j,\,ub,\,0)=\\=Y(e_j,\,ub,\,0)[Y(e_j,\,-ub,\,0),\,Y(e_{-i},\,v\eps{i},\,a)]=\\
=Y(e_j,\,ub,\,0)Y(e_{-i},\,-ubv_{-j}\eps i\eps j,\,0).
\end{multline*}
Thus,
\begin{multline*}
Y_{(i)}(u,\,v+e_jb,\,a+v_{-j}b\eps{-j})=\\
=[Y(e_i,\,u,\,0),\,Y(e_{-i},\,v\eps{i},\,a)]Y(e_j,\,ub,\,0)Y(e_{-i},\,ua\eps{-i},\,0).
\end{multline*}
Finally, it remains to observe that $Y(e_j,\,ub,\,0)\in\ps{-i}$ commutes with $Y(e_{-i},\,ua\eps{-i},\,0)$ and
$$
Y_{(i)}(u,\,v+e_jb,\,a+v_{-j}b\eps j)
=Y_{(i)}(u,\,v,\,a)Y_{(i)}(u,\,e_jb,\,0).
$$
\end{proof}

We have defined $Y_{(i)}(u,\,v,\,a)$ only for $u$ and $v$ both having pairs of zeros. Now we want to define the generators for an arbitrary $v$.

\begin{df}
For $u$, $v\in V$ such that $u_i=u_{-i}=0$, $\lan u,\,v\ran=0$, and $a\in R$ define
\begin{multline*}
Y_{(i)}(u,\,v,\,a)=\\=Y_{(i)}(u,v-e_iv_i-e_{-i}v_{-i},a-v_iv_{-i}\eps i)Y(e_i,\,uv_i,\,0)Y(e_{-i},\,uv_{-i},\,0).
\end{multline*}
\end{df}

\begin{rk}
Observe that the above definition coincides with the old one for $v$ with $v_i=v_{-i}=0$, so that we can use the same notation for the generator. Observe also that the right-hand side is well-defined. Namely, $v-e_iv_i-e_{-i}v_{-i}$ has zeros in positions $\pm i$ and is orthogonal to $u$. Indeed, it is obvious since $v$, $e_i$ and $e_{-i}$ are orthogonal to $u$.
\end{rk}

\begin{lm}
\label{w-conjugation}
For $g\in\ls i$, $u$, $v\in V$ such that $u_i=u_{-i}=0$, $\lan u,\,v\ran=0$, and $a\in R$ one has
$$
g\,Y_{(i)}(u,\,v,\,a)g\inv=Y_{(i)}(\phi(g)u,\,\phi(g)v,\,a).
$$
\end{lm}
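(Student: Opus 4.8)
The plan is to expand $Y_{(i)}(u,\,v,\,a)$ by the extended definition and reduce the claim to conjugation formulae already in hand. Writing $v'=v-e_iv_i-e_{-i}v_{-i}$ and $a'=a-v_iv_{-i}\eps i$, the definition gives
$$
Y_{(i)}(u,\,v,\,a)=Y_{(i)}(u,\,v',\,a')\,Y(e_i,\,uv_i,\,0)\,Y(e_{-i},\,uv_{-i},\,0),
$$
so it suffices to conjugate each of the three factors by $g$ and then reassemble the product.

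First I would conjugate the leading factor. Since $v'$ has zeros in positions $\pm i$ and is orthogonal to $u$, the conjugation formula for $Y_{(i)}$ established just above for two vectors both annihilated at $\pm i$ applies and yields $g\,Y_{(i)}(u,\,v',\,a')\,g\inv=Y_{(i)}(\phi(g)u,\,\phi(g)v',\,a')$. For the two remaining factors note that $g\in\ls i=\ps i\cap\ps{-i}$, and that $u_{-i}=u_i=0$ forces $(uv_i)_{-i}=0$ and $(uv_{-i})_i=0$; hence Lemma~\ref{y-conjugated-by-ps} gives $g\,Y(e_i,\,uv_i,\,0)\,g\inv=Y(e_i,\,(\phi(g)u)v_i,\,0)$ and likewise $g\,Y(e_{-i},\,uv_{-i},\,0)\,g\inv=Y(e_{-i},\,(\phi(g)u)v_{-i},\,0)$, using that scalar multiplication commutes with the linear map $\phi(g)$.

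It then remains to recognise the resulting product as $Y_{(i)}(\phi(g)u,\,\phi(g)v,\,a)$. The key point is that $g\in\ls i$ forces $\phi(g)$ to fix both $e_i$ and $e_{-i}$, so that, since $\phi(g)\in\Sp(V)$, one has $(\phi(g)v)_{i}=\eps{-i}\lan e_{-i},\,\phi(g)v\ran=\eps{-i}\lan e_{-i},\,v\ran=v_i$ and likewise $(\phi(g)v)_{-i}=v_{-i}$. Consequently $a-(\phi(g)v)_i(\phi(g)v)_{-i}\eps i=a'$, the long-root arguments $(\phi(g)u)(\phi(g)v)_{\pm i}$ coincide with $(\phi(g)u)v_{\pm i}$, and
$$
\phi(g)v-e_i(\phi(g)v)_i-e_{-i}(\phi(g)v)_{-i}=\phi(g)\big(v-e_iv_i-e_{-i}v_{-i}\big)=\phi(g)v'.
$$
Feeding these identities into the extended definition applied to the triple $(\phi(g)u,\,\phi(g)v,\,a)$ reproduces exactly the three-factor product obtained above, which completes the proof.

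The only genuinely delicate point is the invariance of the $\pm i$ coordinates, $(\phi(g)v)_{\pm i}=v_{\pm i}$; everything else is bookkeeping. This invariance is precisely where the hypothesis $g\in\ls i$, rather than merely $g\in\ps i$, is needed, since it is what guarantees that $\phi(g)$ fixes both $e_i$ and $e_{-i}$.
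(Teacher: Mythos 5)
Your proof is correct and follows essentially the same route as the paper: expand $Y_{(i)}(u,\,v,\,a)$ by the extended definition, conjugate the leading factor via the preceding $\ls i$-conjugation lemma and the two unipotent factors via Lemma~\ref{y-conjugated-by-ps}, and reassemble using $(\phi(g)v)_{\pm i}=v_{\pm i}$ and $\phi(g)e_{\pm i}=e_{\pm i}$. The coordinate-invariance computation you single out as the key point is exactly the opening step of the paper's own proof.
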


\begin{proof}
Since $g\in\ls i$ one gets 
$$
\big(\phi(g)v\big)_i=\lan\phi(g)v,\,e_{-i}\ran\eps i=\lan\phi(g)v,\,\phi(g)e_{-i}\ran\eps i=\lan v,\,e_{-i}\ran\eps i=v_i
$$
and similarly $\big(\phi(g)v\big)_{-i}=v_{-i}$. Then
\begin{multline*}
\!\,^gY_{(i)}(u,\,v,\,a)=\\
=\!\,^gY_{(i)}(u,v-e_iv_i-e_{-i}v_{-i},a-v_iv_{-i}\eps i)\cdot\,^gY(e_i,\,uv_i,\,0)\cdot\,^gY(e_{-i},\,uv_{-i},\,0)=\\
=Y_{(i)}(\phi(g)u,\phi(g)v-e_i\big(\phi(g)v\big)_i-e_{-i}\big(\phi(g)v\big)_{-i},a-\big(\phi(g)v\big)_i\big(\phi(g)v\big)_{-i}\eps i)\cdot\\
\cdot Y(e_i,\,\phi(g)u\big(\phi(g)v\big)_i,\,0)Y(e_{-i},\,\phi(g)u\big(\phi(g)v\big)_{-i},\,0)=\\
=Y_{(i)}(\phi(g)u,\,\phi(g)v,\,a).
\end{multline*}
\end{proof}

\begin{rk}
Obviously, for $u$, $v\in V$ such that $u_i=u_{-i}=u_j=u_{-j}=0$ and $v_j=v_{-j}=0$, $a\in R$, one has $Y_{(i)}(u,\,v,\,a)\in\ls j$. Indeed, it is a product of elements from $\ls j$ by definition.
\end{rk}

\begin{lm}
\label{w-correctness}
For $u$, $v\in V$ such that $u_i=u_{-i}=u_j=u_{-j}=0$, $\lan u,\,v\ran=0$, and $a\in R$ one has
$$
Y_{(i)}(u,\,v,\,a)=Y_{(j)}(u,\,v,\,a).
$$
\end{lm}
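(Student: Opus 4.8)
The plan is to expand both $Y_{(i)}(u,v,a)$ and $Y_{(j)}(u,v,a)$ by the extended definition and, using the additivity of Lemma~\ref{z-additivity}, to reduce each of them to a common ``core'' generator $Y_{(i)}(u,v',\bar a)$ — which equals $Y_{(j)}(u,v',\bar a)$ by the already established Lemma~\ref{z-correctness} — multiplied by the same four rank-one factors $Y(e_{\pm i},uv_{\pm i},0)$, $Y(e_{\pm j},uv_{\pm j},0)$ occurring in two different orders. The whole statement then collapses to checking that these two orderings agree. Concretely, write
\[
v'=v-e_iv_i-e_{-i}v_{-i}-e_jv_j-e_{-j}v_{-j};
\]
since $u$ has zeros in positions $\pm i,\pm j$, the vector $v'$ is orthogonal to $u$ and has zeros in all four positions. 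Put $P_i=Y(e_i,uv_i,0)Y(e_{-i},uv_{-i},0)$ and $P_j=Y(e_j,uv_j,0)Y(e_{-j},uv_{-j},0)$.

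First I would unfold the extended definition of $Y_{(i)}(u,v,a)$, producing the inner generator $Y_{(i)}(u,\,v-e_iv_i-e_{-i}v_{-i},\,a-v_iv_{-i}\eps i)$ and the factor $P_i$, and then peel the $\pm j$-components off this inner generator by two applications of Lemma~\ref{z-additivity} (with index $\pm j$), recognising the split-off factors as $P_j$ via the identity $Y_{(i)}(u,e_{\pm j}b,0)=Y(e_{\pm j},ub,0)$. Tracking the scalar corrections coming from Lemma~\ref{z-additivity} and from the extended definition, this gives
\[
Y_{(i)}(u,v,a)=Y_{(i)}(u,v',\bar a)\,P_j\,P_i,\qquad \bar a=a-v_iv_{-i}\eps i-v_jv_{-j}\eps j.
\]
Running the mirror computation for $Y_{(j)}(u,v,a)$ — now peeling the $\pm i$-components — yields the \emph{same} core scalar $\bar a$ and the reversed ordering $Y_{(j)}(u,v,a)=Y_{(j)}(u,v',\bar a)\,P_i\,P_j$. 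By Lemma~\ref{z-correctness} the two cores coincide, so the claim reduces to the single identity $P_jP_i=P_iP_j$.

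To finish I would prove $[P_i,P_j]=1$. The key point is that each factor of $P_i$ lies in $\ls j=\ps j\cap\ps{-j}$: the vectors $uv_{\pm i}$ have zeros in positions $\pm i,\pm j$, so $Y(e_{\pm i},uv_{\pm i},0)\in\ls j$ by the consequence of Lemma~\ref{unipotent-decomposition} that $Y(e_i,w,a)\in\ls j$ whenever $w_{-i}=w_j=w_{-j}=0$. Hence $P_i\in\ps j\cap\ps{-j}$, and conjugating the factors of $P_j$ by $P_i$ through Lemma~\ref{y-conjugated-by-ps} gives $\,^{P_i}Y(e_{\pm j},uv_{\pm j},0)=Y(e_{\pm j},\phi(P_i)(uv_{\pm j}),0)$. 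A direct computation with $\phi(P_i)=T(e_i,uv_i,0)T(e_{-i},uv_{-i},0)$ shows it fixes $uv_{\pm j}$, all the obstructing symplectic products vanishing because $uv_{\pm i}$ and $uv_{\pm j}$ are proportional to $u$ while $u$ has zeros in $\pm i,\pm j$. Thus $\,^{P_i}P_j=P_j$, i.e.\ $P_iP_j=P_jP_i$, completing the proof.

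The fiddly part is the scalar bookkeeping in the two peeling computations: one must verify that the $\langle\ ,\ \rangle$-corrections of Lemma~\ref{z-additivity} together with the $v_{\pm i}v_{\mp i}$-terms of the extended definition assemble into the same symmetric scalar $\bar a$ in both orders. The genuine obstacle, however, is the commutation $P_iP_j=P_jP_i$: it is an identity in the Steinberg group, not merely in $\Ep(2l,\,R)$, so it cannot be read off from the (easy) commuting of the images under $\phi$, and it is precisely here that the proportionality of $uv_{\pm i},uv_{\pm j}$ to $u$ is indispensable.
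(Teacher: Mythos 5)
Your proposal is correct and follows essentially the same route as the paper: unfold the extended definition, peel off the $\pm j$ (resp.\ $\pm i$) components with Lemma~\ref{z-additivity} to reach the common core $Y(u,\,\tilde{\tilde v},\,\tilde{\tilde a})$ handled by Lemma~\ref{z-correctness}, and then commute the four rank-one factors using their membership in $\ls j$. The only difference is cosmetic — the paper dismisses the final commutation $P_iP_j=P_jP_i$ as ``obvious'' from the Levi membership, whereas you spell it out via Lemma~\ref{y-conjugated-by-ps} and the fact that $\phi(P_i)$ fixes $uv_{\pm j}$, which is exactly the intended justification.
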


\begin{proof}
Denote $\tilde v=v-e_iv_i-e_{-i}v_{-i}$, $\tilde a=a-v_iv_{-i}\eps i$. Then by Lemma~\ref{z-additivity} one gets
\begin{multline*}
Y_{(i)}(u,\,\tilde v,\,\tilde a)=Y_{(i)}(u,\,\tilde v-e_{-j}v_{-j},\,\tilde a-v_jv_{-j}\eps j)Y_{(i)}(u,\,e_{-j}v_{-j},\,0)=\\
=Y_{(i)}(u,\,\tilde v-e_{-j}v_{-j}-e_jv_j,\,\tilde a-v_jv_{-j}\eps j)Y_{(i)}(u,\,e_{j}v_{j},\,0)Y_{(i)}(u,\,e_{-j}v_{-j},\,0).
\end{multline*}
Further, denote $\tilde{\tilde v}=\tilde v-e_jv_j-e_{-j}v_{-j}$ and $\tilde{\tilde a}=\tilde a-v_jv_{-j}\eps j$. Then one has
\begin{multline*}
Y_{(i)}(u,\,v,\,a)=Y_{(i)}(u,\,\tilde{\tilde v},\,\tilde{\tilde a})Y(e_j,\,uv_j,\,0)Y(e_{-j},\,uv_{-j},\,0)\cdot\\
\cdot Y(e_i,\,uv_i,\,0)Y(e_{-i},\,uv_{-i},\,0).
\end{multline*}
Changing roles of $i$ and $j$ one gets
\begin{multline*}
Y_{(j)}(u,\,v,\,a)=Y_{(j)}(u,\,\tilde{\tilde v},\,\tilde{\tilde a})Y(e_i,\,uv_i,\,0)Y(e_{-i},\,uv_{-i},\,0)\cdot\\
\cdot Y(e_j,\,uv_j,\,0)Y(e_{-j},\,uv_{-j},\,0).
\end{multline*}
But $Y_{(i)}(u,\,\tilde{\tilde v},\,\tilde{\tilde a})=Y_{(j)}(u,\,\tilde{\tilde v},\,\tilde{\tilde a})$ by Lemma~\ref{z-correctness}. Finally, it remains to observe that $Y(e_{-i},\,uv_{-i},\,0)$ and $Y(e_i,\,uv_i,\,0)$ commute with both $Y(e_j,\,uv_j,\,0)$ and $Y(e_{-j},\,uv_{-j},\,0)$. This is obvious from the fact that the above elements lie in $\ls j$.
\end{proof}

\begin{rk}
For $u$ equal to the base vector $e_j$ using Lemma~\ref{z-decomposition-for-y} one gets 
\begin{multline*}
Y_{(i)}(e_j,\,v,\,a)=\\=Y(e_j,v-e_iv_i-e_{-i}v_{-i},a-v_iv_{-i}\eps i)Y(e_i,\,e_jv_i,\,0)Y(e_{-i},\,e_jv_{-i},\,0)=\\
=Y(e_j,\,v,\,a).
\end{multline*}
\end{rk}

\begin{df}
For $u$ having at least two pairs of zeros the element $Y_{(i)}(u,\,v,\,a)$ does not depend on the choice of $i$ by Lemma~\ref{w-correctness}. In this situation we will often omit the index in the notation,
$$
Y(u,\,v,\,a)=Y_{(i)}(u,\,v,\,a).
$$
\end{df}

\section{Long root type elements}

\begin{df}
For $u\in V$ and $a\in R$ define
\begin{multline*}
X_{(i)}(u,\,0,\,a)=Y_{(i)}(u-e_iu_i-e_{-i}u_{-i},\,0,\,a)Y(e_iu_i+e_{-i}u_{-i},\,0,\,a)\cdot\\
\cdot Y(e_iu_i+e_{-i}u_{-i},\,(u-e_iu_i-e_{-i}u_{-i})a,\,0).
\end{multline*}
\end{df}

Our objective in this section is to show that $X_{(i)}(u,\,0,\,a)$ does not depend on the choice of $i$.

\begin{lm}
\label{long-add}
For $u$, $v\in V$ such that $u_i=u_{-i}=u_j=u_{-j}=0$, $v_{i}=v_{-i}=0$, $\lan u,\,v\ran=0$, $a$, $b\in R$ one has
$$
Y(u,\,v,\,a+b)=Y(u,\,v,\,a)Y(u,\,0,\,b).
$$
\end{lm}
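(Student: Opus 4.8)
The plan is to expand the left-hand side through the defining formula for $Y_{(i)}$, push everything into the shape $Y(u,v,a)\cdot{}^{Y(e_{-i},w,\cdot)}Y(u,0,b)$ by pure commutator bookkeeping, and then kill the conjugation using Lemma~\ref{w-conjugation}.

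Since here $u_i=u_{-i}=v_i=v_{-i}=0$, both $Y(u,v,c)=Y_{(i)}(u,v,c)$ and $Y(u,0,b)=Y_{(i)}(u,0,b)$ are given by the basic formula $[Y(e_i,u,0),Y(e_{-i},v\eps i,c)]Y(e_{-i},uc\eps{-i},0)$. First I would split the scalar $a+b$ by Lemma~\ref{y-add}, writing $Y(e_{-i},v\eps i,a+b)=Y(e_{-i},v\eps i,a)Y(e_{-i},0,b)$ and $Y(e_{-i},u(a+b)\eps{-i},0)=Y(e_{-i},ua\eps{-i},0)Y(e_{-i},ub\eps{-i},0)$, the form corrections vanishing because $\lan u,u\ran=0$ and $\lan v,0\ran=0$. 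Applying the commutator identity $[x,yz]=[x,y]\cdot{}^{y}[x,z]$, the two sub-commutators are recognised from the definition as $Y(u,v,a)Y(e_{-i},-ua\eps{-i},0)$ and $Y(u,0,b)Y(e_{-i},-ub\eps{-i},0)$. All the trailing factors from $\ur{-i}$ have vector part either proportional to $u$ or equal to $v\eps i$, so by Lemma~\ref{y-add} they commute pairwise (every relevant form is a multiple of $\lan u,u\ran$ or $\lan u,v\ran$, hence zero). Merging the two-sided $\ur{-i}$-factors into a single conjugator then collapses the identity to
$$Y(u,v,a+b)=Y(u,v,a)\cdot{}^{Y(e_{-i},w,a)}Y(u,0,b),\qquad w=v\eps i-ua\eps{-i},$$
and one checks immediately that $w_i=w_{-i}=0$ and $\lan u,w\ran=0$. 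Everything thus reduces to showing that $Y(e_{-i},w,a)$ commutes with $Y(u,0,b)$.

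This commutativity is the step I expect to be the crux, but I believe it yields to a double use of index-independence. Since $u$ vanishes in both the $\pm i$ and the $\pm j$ positions, the remark following Lemma~\ref{w-conjugation} gives $Y(u,0,b)=Y_{(i)}(u,0,b)\in\ls j$. Since $w_i=0$ (equivalently $\lan e_{-i},w\ran=0$) and $j\neq\pm i$, the base-vector element can be rewritten with index $j$, namely $Y(e_{-i},w,a)=Y_{(j)}(e_{-i},w,a)$, by the remark after Lemma~\ref{w-correctness}. Now Lemma~\ref{w-conjugation}, applied with index $j$ and conjugator $g=Y(u,0,b)\in\ls j$, gives
$${}^{Y(u,0,b)}Y(e_{-i},w,a)=Y_{(j)}\big(T(u,0,b)e_{-i},\,T(u,0,b)w,\,a\big).$$
As $T(u,0,b)e_{-i}=e_{-i}$ (because $u_i=0$) and $T(u,0,b)w=w$ (because $\lan u,w\ran=0$), the right-hand side is again $Y(e_{-i},w,a)$; hence the two elements commute, which finishes the reduction and the lemma.

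The only genuine hazard is the first stage: keeping the several $\ur{-i}$-factors in the correct order and confirming that each form correction really vanishes. Once the identity has been forced into the form $Y(u,v,a)\cdot{}^{Y(e_{-i},w,a)}Y(u,0,b)$, Lemma~\ref{w-conjugation} does all the remaining work, so I do not anticipate any new difficulty beyond careful commutator algebra.
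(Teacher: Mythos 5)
Your proof is correct and follows essentially the same route as the paper's: both expand $Y(u,v,a+b)=Y_{(i)}(u,v,a+b)$ via the defining commutator formula, split the scalar using Lemma~\ref{y-add} together with $[x,\,yz]=[x,\,y]\cdot{}^{y}[x,\,z]$, and then obtain the key commutation from index-independence, i.e.\ from rewriting $Y(u,0,b)$ as $Y_{(j)}(u,0,b)$ so that it lies in a Levi subgroup and its conjugation action is governed by $\phi$, which fixes the relevant vectors. The only difference is bookkeeping: the paper lets $Y_{(j)}(u,0,b)\in\ls i$ commute past $Y(e_{-i},\,v\eps i,\,a)$ and $Y(e_{-i},\,ua\eps{-i},\,0)$ separately, whereas you first merge the conjugators into a single $Y(e_{-i},\,w,\,a)$ with $w=v\eps i-ua\eps{-i}$ and then apply Lemma~\ref{w-conjugation} in the opposite direction — an equivalent argument.
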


\begin{proof}
Decompose $Y(u,\,v,\,a+b)$ as a product of unipotents
\begin{multline*}
Y(u,\,v,\,a+b)
=[Y(e_i,\,u,\,0),\,Y(e_{-i},\,v\eps i,\,a+b)]Y(e_{-i},\,u(a+b)\eps{-i},\,0).
\end{multline*}
Using $[a,\,bc]=[a,\,b]\cdot\,^b[a,\,c]$ we obtain
\begin{multline*}
[Y(e_i,\,u,\,0),\,Y(e_{-i},\,v\eps i,\,a+b)]=\\
=[Y(e_i,\,u,\,0),\,Y(e_{-i},\,v\eps i,\,a)]\cdot\,^{Y(e_{-i},\,v\eps i,\,a)}[Y(e_i,\,u,\,0),\,Y(e_{-i},\,0,\,b)].
\end{multline*}
Since $\lan u,\,v\ran=0$ one has 
$$\!\,^{Y(e_{-i},\,v\eps i,\,a)}Y(e_{-i},\,ub\eps{-i},\,0)=Y(e_{-i},\,ub\eps{-i},\,0),$$ 
and thus
\begin{multline*}
Y(u,\,v,\,a+b)=\\
=[Y(e_i,\,u,\,0),\,Y(e_{-i},\,v\eps i,\,a+b)]Y(e_{-i},\,ub\eps{-i},\,0)Y(e_{-i},\,ua\eps{-i},\,0)=\\
=[Y(e_i,\,u,\,0),\,Y(e_{-i},\,v\eps i,\,a)]\cdot\,^{Y(e_{-i},\,v\eps i,\,a)}[Y(e_i,\,u,\,0),\,Y(e_{-i},\,0,\,b)]\cdot\\
\cdot\,^{Y(e_{-i},\,v\eps i,\,a)}Y(e_{-i},\,ub\eps{-i},\,0)\cdot Y(e_{-i},\,ua\eps{-i},\,0)=\\
=[Y(e_i,\,u,\,0),\,Y(e_{-i},\,v\eps i,\,0)]\cdot\,^{Y(e_{-i},\,v\eps i,\,a)}Y_{(i)}(u,\,0,\,b)\cdot Y(e_{-i},\,ua\eps{-i},\,0).
\end{multline*}
Recall that $Y_{(j)}(u,\,0,\,b)\in\ls i$ commutes with both $Y(e_{-i},\,v\eps i,\,0)$ and $Y(e_{-i},\,ua\eps{-i},\,0)$, so that
\begin{multline*}
Y(u,\,v,\,a+b)=[Y(e_i,\,u,\,0),\,Y(e_{-i},\,v\eps i,\,a)]Y(e_{-i},\,ua\eps{-i},\,0)Y(u,\,0,\,b)=\\
=Y_{(i)}(u,\,v,\,a)Y(u,\,0,\,b).
\end{multline*}
\end{proof}

\begin{rk}
For $u\in V$ having at least two pairs of zeros one has $Y(u,\,0,\,0)=1$ and $Y(u,\,0,\,a)\inv=Y(u,\,0,\,-a)$.
\end{rk}

\begin{lm}
\label{w=zz}
For $u$, $v\in V$ such that $u_i=u_{-i}=u_j=u_{-j}=0$, $\lan u,\,v\ran=0$, and $a\in R$ one has
$$
Y(u,\,v,\,a)=Y(u,\,v-e_iv_i-e_{-i}v_{-i},\,a)Y(u,\,e_iv_i+e_{-i}v_{-i},\,0).
$$
\end{lm}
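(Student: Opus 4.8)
The plan is to unfold the extended definition of $Y_{(i)}(u,v,a)$ twice and collapse the result using Lemma~\ref{long-add}; no genuinely hard computation is involved, only careful bookkeeping of which coordinates vanish.

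First I would observe that since $u$ has zero coordinates in both positions $\pm i$ and $\pm j$, the unindexed symbol $Y(u,v,a)$ is defined and equals $Y_{(i)}(u,v,a)$. Writing $\tilde v=v-e_iv_i-e_{-i}v_{-i}$ (which has zero $\pm i$-coordinates and is still orthogonal to $u$, since $\lan u,\,e_{\pm i}\ran=u_{\mp i}\eps{\mp i}=0$), the defining formula for the extended $Y_{(i)}$ gives
$$
Y(u,\,v,\,a)=Y_{(i)}(u,\,\tilde v,\,a-v_iv_{-i}\eps i)\,Y(e_i,\,uv_i,\,0)\,Y(e_{-i},\,uv_{-i},\,0).
$$
Because $\tilde v$ has zero $\pm i$-coordinates, $Y_{(i)}(u,\,\tilde v,\,\cdot)=Y(u,\,\tilde v,\,\cdot)$ agrees with the earlier definition, matching the first factor of the target statement up to the scalar shift by $-v_iv_{-i}\eps i$.

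Next I would expand the second factor of the claimed decomposition. Setting $w=e_iv_i+e_{-i}v_{-i}$, one checks $\lan u,\,w\ran=0$ and $w-e_iw_i-e_{-i}w_{-i}=0$, so the same definition yields
$$
Y(u,\,w,\,0)=Y(u,\,0,\,-v_iv_{-i}\eps i)\,Y(e_i,\,uv_i,\,0)\,Y(e_{-i},\,uv_{-i},\,0),
$$
where the residual short part collapses to $0$ and only a long-root term together with the two unipotents $Y(e_{\pm i},\,uv_{\pm i},\,0)$ survive.

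Finally I would substitute this expansion into the product $Y(u,\,\tilde v,\,a)\,Y(u,\,w,\,0)$ and combine the two long-root scalar terms by Lemma~\ref{long-add}, which applies since $\tilde v$ is orthogonal to $u$ and has zero $\pm i$-coordinates while $u$ has the two required pairs of zeros:
$$
Y(u,\,\tilde v,\,a)\,Y(u,\,0,\,-v_iv_{-i}\eps i)=Y(u,\,\tilde v,\,a-v_iv_{-i}\eps i).
$$
The resulting expression is precisely the right-hand side of the first display, namely $Y(u,\,v,\,a)$, which closes the argument. The main obstacle, such as it is, is purely the verification of the orthogonality and vanishing-coordinate hypotheses at each stage so that every symbol is well-defined and Lemma~\ref{long-add} is legitimately invoked.
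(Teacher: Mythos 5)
Your proof is correct and takes essentially the same route as the paper: both arguments unfold the extended definition of $Y_{(i)}(u,\,v,\,a)$, identify $Y(u,\,0,\,-v_iv_{-i}\eps i)\,Y(e_i,\,uv_i,\,0)\,Y(e_{-i},\,uv_{-i},\,0)$ with $Y_{(i)}(u,\,e_iv_i+e_{-i}v_{-i},\,0)$, and merge the scalar shift using Lemma~\ref{long-add}. The only difference is the direction of the bookkeeping—you expand the right-hand side and match it against the expansion of the left, while the paper rewrites the left-hand side until it becomes the right.
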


\begin{proof}
By definition
\begin{multline*}
Y(u,\,v,\,a)=Y_{(i)}(u,\,v,\,a)=\\
=Y_{(i)}(u,v-e_iv_i-e_{-i}v_{-i},a-v_iv_{-i}\eps i)Y(e_i,\,uv_i,\,0)Y(e_{-i},\,uv_{-i},\,0).
\end{multline*}
Denote $\tilde v=v-e_iv_i-e_{-i}v_{-i}$, then by the previous lemma
$$
Y(u,\tilde v,a-v_iv_{-i}\eps i)=Y(u,\,\tilde v,\,a)Y(u,\,0,\,-v_iv_{-i}\eps i),
$$
and thus
\begin{multline*}
Y(u,\,v,\,a)=Y(u,\,\tilde v,\,a)Y(u,\,0,\,-v_iv_{-i}\eps i)Y(e_i,\,uv_i,\,0)Y(e_{-i},\,uv_{-i},\,0)=\\
=Y(u,\,\tilde v,\,a)Y_{(i)}(u,\,e_iv_i+e_{-i}v_{-i},\,0).
\end{multline*}
\end{proof}

\begin{cl*}
Consider $u\in V$, $a\in R$, and denote $v=e_iu_i+e_{-i}u_{-i}$, $v'=e_ju_j+e_{-j}u_{-j}$, $\tilde u=u-v$, $\tilde{\tilde u}=\tilde u-v'$. Then one has
\begin{multline*}
X_{(i)}(u,\,0,\,a)=Y_{(i)}(\tilde u,\,0,\,a)Y(v,\,0,\,a)Y(v,\,\tilde ua,\,0)=\\
=Y_{(i)}(\tilde u,\,0,\,a)Y(v,\,0,\,a)Y(v,\,\tilde{\tilde u}a,\,0)Y(v,\,v'a,\,0).
\end{multline*}
\end{cl*}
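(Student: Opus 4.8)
The plan is to read off the first equality directly from the definition and to obtain the second by a single application of Lemma~\ref{w=zz}.

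First I would rewrite the defining formula for $X_{(i)}(u,\,0,\,a)$ in the abbreviated notation. With $v=e_iu_i+e_{-i}u_{-i}$ and $\tilde u=u-e_iu_i-e_{-i}u_{-i}=u-v$ the definition reads
\[
X_{(i)}(u,\,0,\,a)=Y_{(i)}(\tilde u,\,0,\,a)\,Y(v,\,0,\,a)\,Y(v,\,\tilde ua,\,0),
\]
which is precisely the first claimed identity; no computation is needed. Here the index on the factors $Y(v,\,\cdot\,,\,\cdot)$ may be suppressed because $v$ is supported on $\{\pm i\}$ and therefore, as $l\geq3$, has at least two pairs of zero coordinates.

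For the second equality the whole task reduces to rewriting the last factor $Y(v,\,\tilde ua,\,0)$. I would apply Lemma~\ref{w=zz} to it, but splitting the second argument at the index $j$ instead of $i$. The hypotheses hold: $v$ is supported on $\{\pm i\}$, hence $v_j=v_{-j}=v_k=v_{-k}=0$ for any $k\notin\{i,\,j\}$ (such a $k$ exists since $l\geq3$), and $\lan v,\,\tilde ua\ran=0$ because $\tilde u_i=\tilde u_{-i}=0$ while $v$ lies in the span of $e_{\pm i}$. The lemma then yields
\[
Y(v,\,\tilde ua,\,0)=Y(v,\,\tilde ua-e_j(\tilde ua)_j-e_{-j}(\tilde ua)_{-j},\,0)\,Y(v,\,e_j(\tilde ua)_j+e_{-j}(\tilde ua)_{-j},\,0).
\]
Since $j\neq\pm i$, one has $(\tilde ua)_{\pm j}=u_{\pm j}a$, so the second factor is $Y(v,\,v'a,\,0)$ with $v'=e_ju_j+e_{-j}u_{-j}$, and the first factor has second argument $\tilde ua-v'a=(\tilde u-v')a=\tilde{\tilde u}a$. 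Feeding $Y(v,\,\tilde ua,\,0)=Y(v,\,\tilde{\tilde u}a,\,0)\,Y(v,\,v'a,\,0)$ back into the first identity produces the second.

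I expect no genuine obstacle: the statement is a bookkeeping corollary that merely repackages the definition together with the coordinate-splitting Lemma~\ref{w=zz}. The only points deserving attention are the verification of the hypotheses of that lemma --- in particular the availability of a spare index $k$, which is exactly where the assumption $l\geq3$ enters --- and the elementary identifications $(\tilde ua)_{\pm j}=u_{\pm j}a$ and $\tilde u-v'=\tilde{\tilde u}$.
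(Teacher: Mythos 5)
Your proposal is correct and follows essentially the same route as the paper: the first equality is just the definition of $X_{(i)}(u,\,0,\,a)$ rewritten in the abbreviated notation, and the second follows by applying Lemma~\ref{w=zz} to the factor $Y(v,\,\tilde ua,\,0)$, which is legitimate precisely because $v$ is supported on $\{\pm i\}$ and $l\geq3$ gives it at least two pairs of zero coordinates. The paper's proof is the same argument stated in one sentence; your write-up merely makes explicit the index relabeling, the orthogonality check, and the identifications $(\tilde ua)_{\pm j}=u_{\pm j}a$, $\tilde u-v'=\tilde{\tilde u}$.
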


\begin{proof}
Since $l\geq3$ the vector $v$ has at least two pairs of zeros, so that one can apply the previous lemma.
\end{proof}

\begin{lm}
\label{short-is-three-long}
Consider $j$, $k\not\in\{\pm i\}$ but not necessarily distinct or having non-zero sum, $u$, $v\in V$ such that $u_i=u_{-i}=u_j=u_{-j}=0$ and $v_i=v_{-i}=v_k=v_{-k}=0$, $\lan u,\,v\ran=0$, and $a\in R$. Then holds
$$
Y_{(i)}(u+v,\,0,\,a)=Y(u,\,0,\,a)Y(v,\,0,\,a)Y(v,\,ua,\,0).
$$
\end{lm}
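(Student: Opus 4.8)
The plan is to expand the left-hand side directly from the definition of $Y_{(i)}$ and to read off the three factors on the right one at a time, delegating all the bookkeeping to the conjugation lemmas of the previous section; the target identity is simply the Steinberg incarnation of the ESD-relation $T(u+v,\,0,\,a)=T(u,\,0,\,a)T(v,\,0,\,a)T(v,\,ua,\,0)$. First note that $u+v$ vanishes in positions $\pm i$, so the left-hand side is defined, and that $u$ and $v$ each carry two pairs of zeros, so $Y(u,\,0,\,a)=Y_{(i)}(u,\,0,\,a)$ and $Y(v,\,0,\,a)=Y_{(i)}(v,\,0,\,a)$ are unambiguous. By definition
$$Y_{(i)}(u+v,\,0,\,a)=[Y(e_i,\,u+v,\,0),\,Y(e_{-i},\,0,\,a)]\,Y(e_{-i},\,(u+v)a\eps{-i},\,0).$$
Because $\lan u,\,v\ran=0$, Lemma~\ref{y-add} factors $Y(e_i,\,u+v,\,0)=Y(e_i,\,v,\,0)Y(e_i,\,u,\,0)$ and $Y(e_{-i},\,(u+v)a\eps{-i},\,0)=Y(e_{-i},\,ua\eps{-i},\,0)Y(e_{-i},\,va\eps{-i},\,0)$ with all correction terms killed. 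Abbreviating $A=Y(e_i,\,u,\,0)$, $B=Y(e_i,\,v,\,0)$, $C=Y(e_{-i},\,0,\,a)$, $D=Y(e_{-i},\,va\eps{-i},\,0)$, $E=Y(e_{-i},\,ua\eps{-i},\,0)$ and using $[BA,\,C]={}^B[A,\,C]\,[B,\,C]$, the problem collapses onto the two elementary commutators $[A,\,C]=Y(u,\,0,\,a)E\inv$ and $[B,\,C]=Y(v,\,0,\,a)D\inv$, which are nothing but the definition of $Y_{(i)}(\,\cdot\,,\,0,\,a)$ rearranged.

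It then remains to evaluate ${}^B[A,\,C]={}^BY(u,\,0,\,a)\cdot{}^BE\inv$ and to reassemble. The cross term is produced by the single Heisenberg commutator ${}^BE\inv=[B,\,E\inv]E\inv$: since $E\inv=Y(e_{-i},\,ua\eps i,\,0)$, the definition of $Y_{(i)}$ gives $[B,\,E\inv]=Y_{(i)}(v,\,ua,\,0)=Y(v,\,ua,\,0)$. The companion factor ${}^BY(u,\,0,\,a)$ equals $Y(u,\,0,\,a)$ because $B$ and $Y(u,\,0,\,a)$ commute: re-indexing through $j$ (Lemma~\ref{w-correctness}) exhibits $Y(u,\,0,\,a)=Y_{(j)}(u,\,0,\,a)\in\ls i$, whence Lemma~\ref{y-conjugated-by-ps} computes ${}^{Y(u,\,0,\,a)}B=Y(e_i,\,T(u,\,0,\,a)v,\,0)$, and $T(u,\,0,\,a)v=v$ as $\lan u,\,v\ran=0$. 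Collecting the pieces and cancelling through $D\inv ED=E$ (once more Lemma~\ref{y-add}, as $D$ and $E$ commute) turns the product into $Y(u,\,0,\,a)Y(v,\,ua,\,0)\,E\inv Y(v,\,0,\,a)E$. Re-indexing through $k$ shows $Y(v,\,0,\,a)\in\ls i$ as well, and since $T(v,\,0,\,a)$ fixes $ua\eps i$, Lemma~\ref{y-conjugated-by-ps} for the index $-i$ makes $E$ commute past $Y(v,\,0,\,a)$, leaving $Y(u,\,0,\,a)Y(v,\,ua,\,0)Y(v,\,0,\,a)$. A last appeal to Lemma~\ref{w-conjugation} with $g=Y(v,\,0,\,a)\in\ls i$ acting on $Y_{(i)}(v,\,ua,\,0)$ shows $Y(v,\,0,\,a)$ and $Y(v,\,ua,\,0)$ commute, since $T(v,\,0,\,a)$ fixes both $v$ and $ua$; swapping them yields the required order $Y(u,\,0,\,a)Y(v,\,0,\,a)Y(v,\,ua,\,0)$.

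The delicate point — and the step I expect to demand the most care — is precisely the claim that every conjugation appearing above is trivial, so that the cross term $Y(v,\,ua,\,0)$ is generated exactly once and no stray correction survives. This rests on two facts used together: that $Y(u,\,0,\,a)$ and $Y(v,\,0,\,a)$, although written with the index $i$, genuinely lie in the Levi subgroup $\ls i$ — a feature invisible in their $i$-expansion and only revealed after re-indexing to $j$ and $k$ via Lemma~\ref{w-correctness} — so that Lemmas~\ref{y-conjugated-by-ps} and~\ref{w-conjugation} become applicable; and that the orthogonality $\lan u,\,v\ran=0$ forces $T(u,\,0,\,a)$ and $T(v,\,0,\,a)$ to fix exactly the vectors being moved. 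Tracking which factor sits in $\ur i$, $\ur{-i}$ or $\ls i$ at each stage is the only real source of friction, but beyond this bookkeeping the argument introduces no computation not already packaged in the lemmas above.
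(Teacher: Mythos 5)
Your proof is correct and follows essentially the same route as the paper's: both decompose $Y(e_i,\,u+v,\,0)$ inside the defining commutator via $[ba,\,c]={}^b[a,\,c]\cdot[b,\,c]$ (up to ordering), recognize the sub-commutators as $Y_{(i)}(u,\,0,\,a)$, $Y_{(i)}(v,\,0,\,a)$ and $Y_{(i)}(v,\,ua,\,0)$ straight from the definition, and dispose of all conjugations by re-indexing through $j$ and $k$ to place $Y(u,\,0,\,a)$ and $Y(v,\,0,\,a)$ in $\ls i$. The only (immaterial) difference is that you produce the factors in the order $Y(u,\,0,\,a)Y(v,\,ua,\,0)Y(v,\,0,\,a)$ and swap the last two at the end, whereas the paper orders its insertions so that no final swap is needed.
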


\begin{proof}
Decomposing $Y(e_i,\,u+v,\,0)=Y(e_i,\,v,\,0)Y(e_i,\,u,\,0)$ inside the commutator and using $[ab,\,c]=\,^a[b,\,c]\cdot[a,\,c]$ we get
\begin{multline*}
Y_{(i)}(u+v,\,0,\,a)=[Y(e_i,\,u+v,\,0),\,Y(e_{-i},\,0,\,a)]Y(e_{-i},\,(u+v)a\eps{-i},\,0)=\\
=\!\,^{Y(e_i,\,v,\,0)}[Y(e_i,\,u,\,0),\,Y(e_{-i},\,0,\,a)]\cdot[Y(e_i,\,v,\,0),\,Y(e_{-i},\,0,\,a)]\cdot\\
\cdot Y(e_{-i},\,va\eps{-i},\,0)Y(e_{-i},\,ua\eps{-i},\,0)=\!\,^{Y(e_i,\,v,\,0)}[Y(e_i,\,u,\,0),\,Y(e_{-i},\,0,\,a)]\cdot\\
\cdot Y_{(i)}(v,\,0,\,a)Y(e_{-i},\,ua\eps{-i},\,0).
\end{multline*}
Observing that $Y_{(k)}(v,\,0,\,a)\in\ls i$ commutes with both $Y(e_i,\,v,\,0)$ and $Y(e_{-i},\,-ua\eps{-i},\,0)$, we get
\begin{multline*}
\!\,^{Y(e_i,\,v,\,0)}[Y(e_i,\,u,\,0),\,Y(e_{-i},\,0,\,a)]\cdot Y(v,\,0,\,a)\cdot Y(e_{-i},\,ua\eps{-i},\,0)=\\
=\!\,^{Y(e_i,\,v,\,0)}[Y(e_i,\,u,\,0),\,Y(e_{-i},\,0,\,a)]\cdot\,^{Y(e_i,\,v,\,0)}Y(e_{-i},\,ua\eps{-i},\,0)\cdot\\
\cdot\,^{Y(e_i,\,v,\,0)}Y(e_{-i},\,-ua\eps{-i},\,0)\cdot Y(v,\,0,\,a)\cdot Y(e_{-i},\,ua\eps{-i},\,0)=\\
=\!\,^{Y(e_i,\,v,\,0)}Y_{(i)}(u,\,0,\,a)\cdot Y(v,\,0,\,a)\cdot[Y(e_i,\,v,\,0),\,Y(e_{-i},\,-ua\eps{-i},\,0)]=\\
=\!\,^{Y(e_i,\,v,\,0)}Y(u,\,0,\,a)\cdot Y(v,\,0,\,a)\cdot Y(v,\,ua,\,0).
\end{multline*}
Finally, use that $Y_{(j)}(u,\,0,\,a)\in\ls i$ commutes with $Y(e_i,\,v,\,0)$.
\end{proof}

\begin{cl*}
Consider $u\in V$, $a\in R$, and denote $v=e_iu_i+e_{-i}u_{-i}$, $v'=e_ju_j+e_{-j}u_{-j}$, $\tilde u=u-v$, $\tilde{\tilde u}=\tilde u-v'$. Then
\begin{multline*}
X_{(i)}(u,\,0,\,a)=Y_{(i)}(\tilde u,\,0,\,a)Y(v,\,0,\,a)Y(v,\,\tilde{\tilde u}a,\,0)Y(v,\,v'a,\,0)=\\
=Y(\tilde{\tilde u},\,0,\,a)Y(v',\,0,\,a)Y(v',\,\tilde{\tilde u}a,\,0)Y(v,\,0,\,a)Y(v,\,\tilde{\tilde u}a,\,0)Y(v,\,v'a,\,0).
\end{multline*}
\end{cl*}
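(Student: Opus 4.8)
The plan is to establish the two equalities separately. The first equality is nothing but the corollary to Lemma~\ref{w=zz} already proved above, so I would simply invoke it: starting from the definition $X_{(i)}(u,\,0,\,a)=Y_{(i)}(\tilde u,\,0,\,a)Y(v,\,0,\,a)Y(v,\,\tilde ua,\,0)$ and splitting the second argument of the last factor as $\tilde ua=\tilde{\tilde u}a+v'a$ by Lemma~\ref{w=zz} (legitimate since $v$ has at least two pairs of zero coordinates when $l\geq3$) one recovers the first displayed line verbatim.

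For the second equality the only new ingredient is a decomposition of the leading factor $Y_{(i)}(\tilde u,\,0,\,a)$. I would write $\tilde u=\tilde{\tilde u}+v'$ and apply Lemma~\ref{short-is-three-long} with its first vector taken to be $\tilde{\tilde u}$ and its second vector taken to be $v'$, obtaining
$$
Y_{(i)}(\tilde u,\,0,\,a)=Y(\tilde{\tilde u},\,0,\,a)Y(v',\,0,\,a)Y(v',\,\tilde{\tilde u}a,\,0).
$$
Substituting this expression into the first line produces exactly the second line, so the computation itself is a direct substitution.

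The substance of the argument is therefore the verification that the hypotheses of Lemma~\ref{short-is-three-long} are met, and this is where I would spend the care. By construction $\tilde{\tilde u}$ vanishes in positions $\pm i$ and $\pm j$, matching the condition on the lemma's first vector with the fixed index taken to be $i$ and the auxiliary index taken to be $j$. The vector $v'=e_ju_j+e_{-j}u_{-j}$ is supported only on positions $\pm j$, so it vanishes in positions $\pm i$ and in positions $\pm k$ for any $k\notin\{\pm i,\pm j\}$; such a $k$ exists precisely because $l\geq3$, which also guarantees that $Y(\tilde{\tilde u},\,0,\,a)$ and the two factors $Y(v',\,\cdot\,,\,\cdot)$ are well-defined, each first argument having at least two pairs of zero coordinates. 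Finally $\lan\tilde{\tilde u},\,v'\ran=0$, since $\tilde{\tilde u}$ vanishes in the only positions $\pm j$ on which $v'$ is supported. None of these checks is deep; the one point genuinely requiring attention is the index bookkeeping, namely keeping the fixed index of the ESD-generator equal to $i$ throughout, so that the decomposition of $Y_{(i)}(\tilde u,\,0,\,a)$ really yields factors compatible with the fixed index appearing in $X_{(i)}(u,\,0,\,a)$.
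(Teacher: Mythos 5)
Your proof is correct and follows exactly the route the paper intends for this corollary (which it leaves without explicit proof): the first equality is the corollary to Lemma~\ref{w=zz}, and the second comes from decomposing $Y_{(i)}(\tilde u,\,0,\,a)$ via Lemma~\ref{short-is-three-long} applied to $\tilde u=\tilde{\tilde u}+v'$, with the hypothesis checks ($l\geq3$ supplying the auxiliary index $k$ and the two pairs of zeros) done as the paper does. Nothing is missing.
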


\begin{lm}
\label{short-symmetry}
For $j$, $k\not\in\{\pm i\}$ but not necessarily distinct or having non-zero sum, $u$, $v\in V$ such that $u_i=u_{-i}=u_{j}=u_{-j}=0$ and $v_i=v_{-i}=v_{k}=v_{-k}=0$, $\lan u,\,v\ran=0$, and $a\in R$, one has
$$
Y(u,\,va,\,0)=Y(v,\,ua,\,0).
$$
\end{lm}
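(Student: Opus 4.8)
The plan is to deduce the symmetry from a two-fold application of Lemma~\ref{short-is-three-long}, after which the whole statement collapses to the commutativity of the two long-root type elements $Y(u,\,0,\,a)$ and $Y(v,\,0,\,a)$. First I would apply Lemma~\ref{short-is-three-long} to the pair $(u,\,v)$, and then once more to the pair $(v,\,u)$; the latter is legitimate since interchanging $u\leftrightarrow v$ and $j\leftrightarrow k$ merely swaps the two sets of hypotheses, and $\lan v,\,u\ran=0$ as well. Because $u+v=v+u$, the left-hand sides $Y_{(i)}(u+v,\,0,\,a)$ coincide, which yields the single identity $Y(u,\,0,\,a)Y(v,\,0,\,a)Y(v,\,ua,\,0)=Y(v,\,0,\,a)Y(u,\,0,\,a)Y(u,\,va,\,0)$. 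Both $Y(v,\,ua,\,0)$ and $Y(u,\,va,\,0)$ are well-defined with the subscript dropped, since $u$ and $v$ each carry two pairs of zero coordinates and $\lan v,\,ua\ran=\lan u,\,va\ran=0$.

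The heart of the argument is then to show $Y(u,\,0,\,a)Y(v,\,0,\,a)=Y(v,\,0,\,a)Y(u,\,0,\,a)$. Once this is available, rewriting the prefix of the left-hand side of the identity above and cancelling the common factor $Y(v,\,0,\,a)Y(u,\,0,\,a)$ leaves exactly $Y(v,\,ua,\,0)=Y(u,\,va,\,0)$, which is the assertion. To establish the commutativity I would use the conjugation formula of Lemma~\ref{w-conjugation}. First note that $Y(v,\,0,\,a)=Y_{(k)}(v,\,0,\,a)$ lies in $\ls i$, because its first argument $v$ vanishes at $\pm k$ and at $\pm i$, while $Y(u,\,0,\,a)=Y_{(i)}(u,\,0,\,a)$ by Lemma~\ref{w-correctness}. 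Conjugating $Y_{(i)}(u,\,0,\,a)$ by $g=Y(v,\,0,\,a)\in\ls i$, Lemma~\ref{w-conjugation} gives $\,^{Y(v,\,0,\,a)}Y(u,\,0,\,a)=Y_{(i)}(\phi(Y(v,\,0,\,a))u,\,0,\,a)=Y_{(i)}(T(v,\,0,\,a)u,\,0,\,a)$.

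The orthogonality now closes the loop: since $T(v,\,0,\,a)u=u+va\lan v,\,u\ran=u$, using $\lan v,\,u\ran=-\lan u,\,v\ran=0$, the right-hand side is again $Y_{(i)}(u,\,0,\,a)=Y(u,\,0,\,a)$, which is precisely the desired commutativity. Thus the only delicate point is this commutativity of the two long-root elements, and as just sketched it follows immediately from the conjugation property together with the hypothesis $\lan u,\,v\ran=0$, which forces $T(v,\,0,\,a)$ to fix $u$. Everything else is bookkeeping with the zero-coordinate conditions needed to drop the subscripts and to invoke Lemma~\ref{short-is-three-long} in both orders.
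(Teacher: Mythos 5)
Your proposal is correct and follows essentially the same route as the paper: the paper likewise applies Lemma~\ref{short-is-three-long} twice with the roles of $u$ and $v$ (and of $j$, $k$) interchanged, and then reduces everything to the commutativity of $Y(u,\,0,\,a)$ and $Y(v,\,0,\,a)$, which holds because these elements lie in $\ls i$ and conjugation (Lemma~\ref{w-conjugation}) acts through $\phi$, with $T(v,\,0,\,a)u=u$ by orthogonality. Your write-up merely makes explicit the conjugation argument that the paper compresses into the remark that $Y(u,\,0,\,-a)\in\ls i$ commutes with $Y(v,\,0,\,-a)$.
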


\begin{proof}
By Lemma~\ref{short-is-three-long} one has
$$
Y(v,\,ua,\,0)=Y(v,\,0,\,-a)Y(u,\,0,\,-a)Y_{(i)}(u+v,\,0,\,a)
$$
and similarly
$$
Y(u,\,va,\,0)=Y(u,\,0,\,-a)Y(v,\,0,\,-a)Y_{(i)}(v+u,\,0,\,a).
$$
But $Y_{(i)}(u,\,0,\,-a)\in\ls i$ commutes with $Y_{(i)}(v,\,0,\,-a)$.
\end{proof}

\begin{lm}
\label{correctness}
For $u\in V$ and $a\in R$ one has
$$
X_{(i)}(u,\,0,\,a)=X_{(j)}(u,\,0,\,a).
$$
\end{lm}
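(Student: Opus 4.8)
The plan is to use the two Corollaries following Lemmas~\ref{w=zz} and \ref{short-is-three-long} to expand both $X_{(i)}(u,0,a)$ and $X_{(j)}(u,0,a)$ into a common, fully symmetric form, and then to show that the only discrepancy between the two expressions disappears because the relevant factors commute. Throughout set $v=e_iu_i+e_{-i}u_{-i}$, $v'=e_ju_j+e_{-j}u_{-j}$ and $\tilde{\tilde u}=u-v-v'$, so that $v$ is supported in positions $\pm i$, $v'$ in positions $\pm j$, and $\tilde{\tilde u}$ has zeros in all four positions $\pm i,\pm j$; moreover $v$, $v'$, $\tilde{\tilde u}$ are pairwise orthogonal. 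Since $l\geq3$, each of these vectors has at least two pairs of zero coordinates, so the index-free notation $Y(\cdot,\cdot,\cdot)$ applies to every element below.

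First I would apply the Corollary after Lemma~\ref{short-is-three-long} to $X_{(i)}(u,0,a)$, obtaining
\[
X_{(i)}(u,0,a)=Y(\tilde{\tilde u},0,a)\,Y(v',0,a)\,Y(v',\tilde{\tilde u}a,0)\,Y(v,0,a)\,Y(v,\tilde{\tilde u}a,0)\,Y(v,v'a,0),
\]
and, interchanging the roles of $i$ and $j$ (which merely interchanges $v$ and $v'$ while leaving $\tilde{\tilde u}$ fixed), the same Corollary applied to $X_{(j)}(u,0,a)$ gives
\[
X_{(j)}(u,0,a)=Y(\tilde{\tilde u},0,a)\,Y(v,0,a)\,Y(v,\tilde{\tilde u}a,0)\,Y(v',0,a)\,Y(v',\tilde{\tilde u}a,0)\,Y(v',va,0).
\]
By Lemma~\ref{short-symmetry} the last factors agree, $Y(v,v'a,0)=Y(v',va,0)$, while the leading factor $Y(\tilde{\tilde u},0,a)$ is common. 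Cancelling both reduces the claim to the single commutativity
\[
\big[\,Y(v',0,a)\,Y(v',\tilde{\tilde u}a,0),\ Y(v,0,a)\,Y(v,\tilde{\tilde u}a,0)\,\big]=1,
\]
that is, to the assertion that the ``$v'$-block'' and the ``$v$-block'' commute.

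To prove this commutation I would exploit two facts. By the Remark preceding Lemma~\ref{w-correctness}, every factor built from $v$ and $\tilde{\tilde u}$ lies in $\ls j$ and every factor built from $v'$ and $\tilde{\tilde u}$ lies in $\ls i$; and by Lemma~\ref{esd-properties} together with the Remark that $T(p,q,c)w=w$ whenever $\lan p,w\ran=\lan q,w\ran=0$, the images $\phi\big(Y(v,0,a)Y(v,\tilde{\tilde u}a,0)\big)=T(v,\tilde{\tilde u}a,a)$ and $\phi\big(Y(v',0,a)Y(v',\tilde{\tilde u}a,0)\big)=T(v',\tilde{\tilde u}a,a)$ fix $e_{\pm j}$ and $e_{\pm i}$ respectively, as well as every vector orthogonal to both of their defining vectors. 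Concretely, I would decompose one block through a third index $k\notin\{\pm i,\pm j\}$ (which exists since $l\geq3$) into factors of the shape $Y(e_{\pm k},w,b)$ in which every occurring vector $w$ is orthogonal to $v$ and to $\tilde{\tilde u}$, push the other block across these factors one at a time using $[x,\,yz]=[x,\,y]\cdot\,^y[x,\,z]$ and the conjugation Lemmas~\ref{y-conjugated-by-ps} and \ref{z-decomposition-for-y}, and check that the pairwise orthogonality $\lan v,v'\ran=\lan v,\tilde{\tilde u}\ran=\lan v',\tilde{\tilde u}\ran=0$ makes every correction term vanish. Reducing in this way to a single unipotent radical $\ur k$, on which $\phi$ is injective by Lemma~\ref{unirad}, it then suffices to compare $\Ep(2l,R)$-images, and there the equality is immediate from the commutation of $T(v,\tilde{\tilde u}a,a)$ and $T(v',\tilde{\tilde u}a,a)$.

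The main obstacle is exactly this last commutation step, and the difficulty is that it must be carried out with only the structural results of \S\S1--4. The general conjugation formula $g\,X(u,0,a)g\inv=X(\phi(g)u,0,a)$ is only established in \S5, and the additivity relations P1--P6 only in \S6, so neither is yet available; in particular neither block lies in a parabolic or Levi subgroup normalising the other, and one cannot simply invoke a conjugation formula. Instead the Heisenberg-type correction terms produced by Lemma~\ref{heis} and the commutator expansions must be tracked explicitly and shown to cancel, the orthogonality of $v$, $v'$ and $\tilde{\tilde u}$ being precisely what forces the cancellation. Once the commutation is in hand, comparing the two displayed expansions finishes the proof.
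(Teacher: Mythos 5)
Your reduction is exactly the paper's: you expand $X_{(i)}(u,\,0,\,a)$ and $X_{(j)}(u,\,0,\,a)$ into the same six-factor products, cancel the common factor $Y(\tilde{\tilde u},\,0,\,a)$ and the equal last factors $Y(v,\,v'a,\,0)=Y(v',\,va,\,0)$ (Lemma~\ref{short-symmetry}), and reduce everything to the commutativity of the blocks $Y(v,\,0,\,a)Y(v,\,\tilde{\tilde u}a,\,0)$ and $Y(v',\,0,\,a)Y(v',\,\tilde{\tilde u}a,\,0)$. The gap is in your proof of that commutativity. Your mechanism --- decompose one block into factors $Y(e_{\pm k},\,w,\,b)$ and push the other block across them, with ``every correction term vanishing by orthogonality'' --- fails at the very first factor: $\phi\big(Y(v,\,0,\,a)Y(v,\,\tilde{\tilde u}a,\,0)\big)=T(v,\,\tilde{\tilde u}a,\,a)$ sends $e_k$ to $e_k-v\tilde{\tilde u}_{-k}a\eps k$, which differs from $e_k$ whenever $\tilde{\tilde u}_{-k}a\neq0$, so the $v$-block does not even commute with the single factor $Y(e_k,\,v',\,0)$ occurring in the decomposition of the $v'$-block. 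The correction terms are genuinely nontrivial, and proving that their product collapses is the whole content of the lemma, not a consequence of orthogonality. Relatedly, the conjugation lemmas you invoke (Lemmas~\ref{y-conjugated-by-ps}, \ref{z-decomposition-for-y}) require the conjugating element to lie in $\ps k$ or $\ps{-k}$, which $Y(v,\,\tilde{\tilde u}a,\,0)$ need not, precisely because $\tilde{\tilde u}$ may have nonzero $\pm k$-coordinates; and the final appeal to injectivity of $\phi$ on $\ur k$ (Lemma~\ref{unirad}) is unusable until one shows the commutator of the two blocks lies in $\ur k$, which nothing in your sketch establishes.

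Your diagnosis that ``neither block lies in a parabolic or Levi subgroup normalising the other, so one cannot invoke a conjugation formula'' is also what leads you astray: the paper argues factor by factor, and for three of the four factor pairs Levi membership does the job. Fixing $k\not\in\{\pm i,\pm j\}$, the factors $Y(v,\,0,\,a)$ and $Y(v',\,0,\,a)$ lie in $\ls k$ (their defining vectors vanish at $\pm k$), so by Lemma~\ref{w-conjugation} each of them commutes with any $Y_{(k)}(\hat u,\,\hat v,\,\hat a)$ whose arguments are fixed by its image under $\phi$; since $T(v,\,0,\,a)$ and $T(v',\,0,\,a)$ fix $v$, $v'$ and $\tilde{\tilde u}$, this settles every pair except $Y(v,\,\tilde{\tilde u}a,\,0)$ versus $Y(v',\,\tilde{\tilde u}a,\,0)$. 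For that last pair the paper uses Lemma~\ref{short-symmetry} to rewrite both as $Y(\tilde{\tilde u},\,va,\,0)$ and $Y(\tilde{\tilde u},\,v'a,\,0)$, and then Lemma~\ref{w=zz}: applied to $Y(\tilde{\tilde u},\,va+v'a,\,0)$, splitting off the $\pm i$-part or the $\pm j$-part first, it yields the product in either order, hence the two factors commute. This symmetry-plus-additivity trick is the idea missing from your proposal, and without it (or a genuine replacement) the proof does not go through.
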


\begin{proof}
Set 
$$v=e_iu_i+e_{-i}u_{-i},\quad v'=e_ju_j+e_{-j}u_{-j},\quad \tilde u=u-v,\quad \tilde{\tilde u}=\tilde u-v'.$$ 
As we have already noticed, Lemmas~\ref{w=zz} and \ref{short-is-three-long} imply that
\begin{multline*}
X_{(i)}(u,\,0,\,a)=Y_{(i)}(\tilde u,\,0,\,a)Y(v,\,0,\,a)Y(v,\,\tilde ua,\,0)=\\
=Y(\tilde{\tilde u},\,0,\,a)Y(v',\,0,\,a)Y(v',\,\tilde{\tilde u}a,\,0)Y(v,\,0,\,a)Y(v,\,\tilde{\tilde u}a,\,0)Y(v,\,v'a,\,0).
\end{multline*}
Interchanging roles of $i$ and $j$ one has
\begin{multline*}
X_{(j)}(u,\,0,\,a)=\\
=Y(\tilde{\tilde u},\,0,\,a)Y(v,\,0,\,a)Y(v,\,\tilde{\tilde u}a,\,0)Y(v',\,0,\,a)Y(v',\,\tilde{\tilde u}a,\,0)Y(v',\,va,\,0).
\end{multline*}
One has $Y(v,\,v'a,\,0)=Y(v',\,va,\,0)$ by Lemma~\ref{short-symmetry}. Now we have to show that $Y(v,\,0,\,a)Y(v,\,\tilde{\tilde u}a,\,0)$ commutes with $Y(v',\,0,\,a)Y(v',\,\tilde{\tilde u}a,\,0)$. With this end fix $k\not\in\{\pm i,\pm j\}$. Observe that $Y_{(i)}(v',\,0,\,a)\in\ls k$ commutes with $Y_{(k)}(v,\,\tilde{\tilde u}a,\,0)$ by Lemma~\ref{w-conjugation}. Next, by Lemma~\ref{short-symmetry} one has
$$
Y(v,\,\tilde{\tilde u}a,\,0)=Y(\tilde{\tilde u},\,va,\,0)
$$
and
$$
Y(v',\,\tilde{\tilde u}a,\,0)=Y(\tilde{\tilde u},\,v'a,\,0).
$$
Now, Lemma~\ref{w=zz} implies that
\begin{multline*}
Y(v,\,\tilde{\tilde u}a,\,0)Y(v',\,\tilde{\tilde u}a,\,0)=Y(\tilde{\tilde u},\,va,\,0)Y(\tilde{\tilde u},\,v'a,\,0)=\\
=Y(\tilde{\tilde u},\,va+v'a,\,0)=Y(\tilde{\tilde u},\,v'a,\,0)Y(\tilde{\tilde u},\,va,\,0)=\\
=Y(v',\,\tilde{\tilde u}a,\,0)Y(v,\,\tilde{\tilde u}a,\,0).
\end{multline*}
Finally, it remains to observe that $Y_{(j)}(v,\,0,\,a)\in\ls k$ commutes with both $Y_{(k)}(v',\,0,\,a)$ and $Y_{(k)}(v',\,\tilde{\tilde u}a,\,0)$ by Lemma~\ref{w-conjugation}.
\end{proof}

\begin{rk}
Since $X_{(i)}(u,\,0,\,a)$ does not depend on the choice of $i$ we will often omit the index in the notation
$$
X(u,\,0,\,a)=X_{(i)}(u,\,0,\,a).
$$
\end{rk}

\section{Proof of the Main Theorem}

Our objective in this section is to prove the conjugation formula for long-root type generators,
$$gX(u,\,0,\,a)g\inv=X(\phi(g)u,\,0,\,a),$$ 
and to complete our proof of the Main Theorem.

\begin{lm}
\label{conjugation-by-long}
For any $u\in V$, any $a$, $b\in R$ and any index $i$, one has
$$
\!\,^{X_{i,-i}(b)}X(u,\,0,\,a)=X(T_{i,-i}(b)u,\,0,\,a).
$$
\end{lm}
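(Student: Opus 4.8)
The plan is to conjugate $X(u,0,a)=X_{(i)}(u,0,a)$ factor by factor, using the fully split form of this element together with the conjugation rule already available for the Levi subgroups. Fix an auxiliary index $j\neq\pm i$, set $v=e_iu_i+e_{-i}u_{-i}$, $v'=e_ju_j+e_{-j}u_{-j}$ and $\tilde{\tilde u}=u-v-v'$, and recall from the corollary to Lemma~\ref{short-is-three-long} the decomposition
$$
X_{(i)}(u,0,a)=Y(\tilde{\tilde u},0,a)\,Y(v',0,a)\,Y(v',\tilde{\tilde u}a,0)\,Y(v,0,a)\,Y(v,\tilde{\tilde u}a,0)\,Y(v,v'a,0).
$$
Each of the six factors has the shape $Y(w_1,w_2,c)$ with $w_1\in\{\tilde{\tilde u},v',v\}$ supported on a proper block. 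Using $l\geq3$ one finds for each factor an index $k\neq\pm i$ with $(w_1)_k=(w_1)_{-k}=0$ (take $k=j$ for the factors whose first argument is $\tilde{\tilde u}$ or $v$, and any $k\neq\pm i,\pm j$ for those whose first argument is $v'$), so by Lemma~\ref{w-correctness} that factor equals $Y_{(k)}(w_1,w_2,c)$ for such a $k$.

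Next observe that the conjugating element is $X_{i,-i}(b)=Y(e_i,0,b)$ and that $\phi\big(X_{i,-i}(b)\big)=T_{i,-i}(b)$ only alters the $i$-th coordinate, namely $T_{i,-i}(b)w=w+e_ib\eps i w_{-i}$. Hence $T_{i,-i}(b)$ fixes $\tilde{\tilde u}$ and $v'$ (both have vanishing $(-i)$-coordinate) and sends $v$ to $v+e_ib\eps i u_{-i}$, which is exactly the $\pm i$-block of $T_{i,-i}(b)u$. The key structural point is that $X_{i,-i}(b)\in\ps k\cap\ps{-k}=\ls k$ for every $k\neq\pm i$: the defining index set $\{-i\}$ of the generator $X_{i,-i}$ contains neither $k$ nor $-k$. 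Therefore Lemma~\ref{w-conjugation} applies to each factor with its chosen index $k$ and yields
$$
\!\,^{X_{i,-i}(b)}Y_{(k)}(w_1,w_2,c)=Y_{(k)}\big(T_{i,-i}(b)w_1,\,T_{i,-i}(b)w_2,\,c\big).
$$

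Since conjugation is an automorphism it commutes with the product, so applying the previous display to all six factors leaves the first three unchanged and replaces $v$ by $T_{i,-i}(b)v$ in the last three; the resulting word is precisely the analogous decomposition of $X_{(i)}(T_{i,-i}(b)u,0,a)$, because $T_{i,-i}(b)u$ has $\pm i$-block $T_{i,-i}(b)v$, $\pm j$-block $v'$ and remainder $\tilde{\tilde u}$. Dropping the index by Lemma~\ref{correctness} gives the claim. The main point requiring care is the index bookkeeping: one must check that each of the six factors is a genuine $Y_{(k)}$ for some admissible $k\neq\pm i$ (this is where $l\geq3$ is used, and it is the reason for splitting off the second block $v'$ rather than working with the shorter single-index form $Y_{(i)}(\tilde u,0,a)Y(v,0,a)Y(v,\tilde ua,0)$, whose leading factor $Y_{(i)}(\tilde u,0,a)$ need not lie in any $\ls k$ with $k\neq\pm i$), and that $X_{i,-i}(b)$ lies in the corresponding $\ls k$ so that Lemma~\ref{w-conjugation} is applicable.
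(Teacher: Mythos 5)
Your proof is correct and takes essentially the same route as the paper's: the same six-factor decomposition of $X_{(i)}(u,\,0,\,a)$ via Lemmas~\ref{w=zz} and~\ref{short-is-three-long}, the same observation that $T_{i,-i}(b)$ fixes $v'$ and $\tilde{\tilde u}$ while carrying $v$ to the $\pm i$-block of $T_{i,-i}(b)u$, and the same factorwise conjugation via Lemma~\ref{w-conjugation}, using that $X_{i,-i}(b)\in\ls j\cap\ls k$ and that each factor can be read as $Y_{(j)}$ or $Y_{(k)}$ with $j,\,k\not\in\{\pm i\}$. Your parenthetical explanation of why the shorter three-factor decomposition would not suffice makes explicit a point the paper leaves implicit, but it is the same argument.
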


\begin{proof}
Choosing $j\not\in\{\pm i\}$ and decomposing $X(u,\,0,\,a)$ with the use of Lemmas~\ref{w=zz} and \ref{short-is-three-long}, one has
\begin{multline*}
X_{(i)}(u,\,0,\,a)=\\
=Y(\tilde{\tilde u},\,0,\,a)Y(v',\,0,\,a)Y(v',\,\tilde{\tilde u}a,\,0)Y(v,\,0,\,a)Y(v,\,\tilde{\tilde u}a,\,0)Y(v,\,v'a,\,0),
\end{multline*}
where $v=e_iu_i+e_{-i}u_{-i}$, $v'=e_ju_j+e_{-j}u_{-j}$, $\tilde{\tilde u}=u-v-v'$. Observe that 
$$
e_i\big(T_{i,-i}(b)u\big)_i+e_{-i}\big(T_{i,-i}(b)u\big)_{-i}=T_{i,-i}(b)v
$$
and, obviously,
$$
e_j\big(T_{i,-i}(b)u\big)_j+e_{-j}\big(T_{i,-i}(b)u\big)_{-j}=v'=T_{i,-i}(b)v'
$$
and similarly
$$
T_{i,-i}(b)u-T_{i,-i}(b)v-v'=\tilde{\tilde u}=T_{i,-i}(b)\tilde{\tilde u}.
$$
Thus, we only have to check the conjugation formula for the factors of $X(u,\,0,\,a)$ in the above decomposition. Indeed, $X_{i,-i}(b)\in\ls j\cap\ls k$ for any $k\not\in\{\pm i,\pm j\}$, and any factor is equal either to $Y_{(j)}(\hat u,\,\hat v,\,\hat a)$, or to $Y_{(k)}(\hat u,\,\hat v,\,\hat a)$ for some $\hat u,\,\hat v\in V$, $\hat a\in R$. 
\end{proof}

\begin{lm}
\label{lm0}
For $u$, $v$, $w\in V$ such that 
$$u_i=u_{-i}=u_j=u_{-j}=0,\qquad v_j=v_{-j}=0,\qquad w_j=w_{-j}=0,$$

and $\lan u,\,v\ran=0$, $\lan u,\,w\ran=0$, $\lan v,\,w\ran=0$, one has
$$
Y(u,\,v,\,0)Y(u,\,w,\,0)=Y(u,\,v+w,\,0).
$$
\end{lm}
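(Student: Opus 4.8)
The plan is to pass to the commutator description of the generators relative to the index $j$ and reduce everything to a single commutation statement. Since $u_j=u_{-j}=0$ and $v_j=v_{-j}=w_j=w_{-j}=0$, with $\lan u,\,v\ran=\lan u,\,w\ran=0$, all three generators are of the form $Y_{(j)}$ with vanishing third argument, so the trailing long-root factor in the definition of $Y_{(j)}$ disappears and
$$
Y(u,\,v,\,0)=[Y(e_j,\,u,\,0),\,Y(e_{-j},\,v\eps j,\,0)],
$$
and likewise for $w$ and for $v+w$. By Lemma~\ref{y-add}, using $\lan v\eps j,\,w\eps j\ran=\lan v,\,w\ran=0$, one has $Y(e_{-j},\,(v+w)\eps j,\,0)=Y(e_{-j},\,v\eps j,\,0)Y(e_{-j},\,w\eps j,\,0)$. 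Substituting this into the commutator for $Y(u,\,v+w,\,0)$ and expanding via $[a,\,bc]=[a,\,b]\cdot\,^b[a,\,c]$ gives
$$
Y(u,\,v+w,\,0)=Y(u,\,v,\,0)\cdot\,^{Y(e_{-j},\,v\eps j,\,0)}Y(u,\,w,\,0).
$$
Thus the whole statement reduces to the claim that $Y(e_{-j},\,v\eps j,\,0)$ commutes with $Y(u,\,w,\,0)$.

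To establish this commutation I would conjugate in the direction to which the available formula applies. Since $u_i=u_{-i}=u_j=u_{-j}=0$ and $w_j=w_{-j}=0$, the element $Y(u,\,w,\,0)=Y_{(i)}(u,\,w,\,0)$ lies in $\ls j\leq\ps{-j}$ (this is exactly where the second pair of zeros $u_i=u_{-i}=0$ is used). Hence Lemma~\ref{y-conjugated-by-ps} applies to it, the second argument $v\eps j$ having vanishing $j$-coordinate as required, and yields
$$
\,^{Y(u,\,w,\,0)}Y(e_{-j},\,v\eps j,\,0)=Y(e_{-j},\,\phi(Y(u,\,w,\,0))(v\eps j),\,0)=Y(e_{-j},\,(T(u,\,w,\,0)v)\eps j,\,0).
$$
Now $v$ is orthogonal to both $u$ and $w$, so the ESD-transformation $T(u,\,w,\,0)$ fixes $v$; consequently $Y(u,\,w,\,0)$ commutes with $Y(e_{-j},\,v\eps j,\,0)$, and the displayed reduction collapses to the desired identity $Y(u,\,v+w,\,0)=Y(u,\,v,\,0)Y(u,\,w,\,0)$.

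The only delicate point is the commutation in the second paragraph; the rest is routine commutator algebra. The mild obstacle there is to conjugate $Y(e_{-j},\,v\eps j,\,0)$ \emph{by} $Y(u,\,w,\,0)$ rather than the reverse, since it is $Y(u,\,w,\,0)$, as an element of the parabolic $\ps{-j}$, that is governed by Lemma~\ref{y-conjugated-by-ps}, and it is the pairwise orthogonality of $u$, $v$, $w$ that forces the resulting ESD-transformation to act trivially on $v$.
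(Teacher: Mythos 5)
Your proof is correct and follows essentially the same route as the paper: the paper likewise writes $Y(u,\,v+w,\,0)$ as the commutator $[Y(e_j,\,u,\,0),\,Y(e_{-j},\,(v+w)\eps j,\,0)]$, splits the second entry by Lemma~\ref{y-add}, expands with $[a,\,bc]=[a,\,b]\cdot\,^b[a,\,c]$, and finishes by observing that $Y_{(i)}(u,\,w,\,0)\in\ps{-j}$ commutes with $Y(e_{-j},\,v\eps j,\,0)$. Your second paragraph merely spells out, via Lemma~\ref{y-conjugated-by-ps} and the orthogonality $T(u,\,w,\,0)v=v$, the commutation step that the paper leaves as an ``observe,'' which is exactly the intended justification.
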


\begin{proof}
Firstly, using $[a,\,bc]=[a,\,b]\cdot\!\,^b[a,\,c]$ we obtain
\begin{multline*}
Y(u,\,v+w,\,0)
=[Y(e_j,\,u,\,0),\,Y(e_{-j},\,(v+w)\eps j,\,0)]=\\
=Y(u,\,v,\,0)\cdot\,^{Y(e_{-j},\,v\eps j,\,0)}Y(u,\,w,\,0).
\end{multline*}
Now, observe that $Y_{(i)}(u,\,w,\,0)\in\ps{-j}$ commutes with $Y(e_{-j},\,v\eps j,\,0)$.
\end{proof}

\begin{lm}
\label{w-symmetry}
Let $v$, $v'\in V$ be vectors having only $\pm i$-th and $\pm j$-th non-zero coordinates respectively; consider also $v''\in V$ such that $(v'')_i=(v'')_{-i}=(v'')_j=(v'')_{-j}=0$. Set $w=v'+v''$. Then
$$Y(v'',\,v,\,0)Y(v',\,v,\,0)=Y_{(i)}(w,\,v,\,0).$$
\end{lm}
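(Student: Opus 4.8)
The plan is to exploit the additivity property just established in Lemma~\ref{lm0}, which lets us split the first argument of $Y$ when the pieces are mutually orthogonal and share a common vanishing pair of coordinates. Here $w=v'+v''$ where $v'$ lives in the $\pm j$-plane and $v''$ vanishes on both the $\pm i$ and $\pm j$ coordinates. I first note that the target vector $v$ has only nonzero coordinates in positions $\pm i$, so $v$ vanishes on $\pm j$; both $v'$ and $v''$ vanish on $\pm j$ as well. This is precisely the setup needed to write $Y_{(i)}(w,\,v,\,0)$ meaningfully (the subscript $i$ records the pair of zeros used, and $w$ vanishes on $\pm i$ by construction since $v''$ does and $v'$ lives in the $\pm j$-plane, assuming $i\not\in\{\pm j\}$).

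**The main reduction.** Since $v'$ occupies the $\pm j$-plane, we have $v'=e_ju_j+e_{-j}u_{-j}$ for suitable scalars; in particular $v'$ has at least two pairs of zeros (using $l\geq3$), so $Y(v',\,v,\,0)=Y_{(i)}(v',\,v,\,0)$ is unambiguous. I would then try to apply Lemma~\ref{lm0} directly with the roles $u\rightsquigarrow w$ split as $v''+v'$, but Lemma~\ref{lm0} splits the \emph{second} argument, not the first. The correct tool is instead the symmetry $Y(u,\,va,\,0)=Y(v,\,ua,\,0)$ from Lemma~\ref{short-symmetry}, which swaps the two vector arguments. The strategy is therefore: pass via Lemma~\ref{short-symmetry} so that $w$ moves into the \emph{second} slot, apply the additivity of Lemma~\ref{lm0} there to split $w=v''+v'$, and then swap each summand back.

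**Carrying it out.** Concretely, by Lemma~\ref{short-symmetry} one has $Y_{(i)}(w,\,v,\,0)=Y(v,\,w,\,0)$, and likewise $Y(v'',\,v,\,0)=Y(v,\,v'',\,0)$ and $Y(v',\,v,\,0)=Y(v,\,v',\,0)$ (each swap is legitimate since $v$ has two pairs of zeros, namely on $\pm j$ and on any third index, and $w,v',v''$ all vanish on $\pm j$). Then Lemma~\ref{lm0}, applied with the common first argument $v$ and second arguments $v'',v'$ — which are mutually orthogonal and both vanish on the $\pm j$ coordinates where $v$ is supported — gives
\begin{equation*}
Y(v,\,v'',\,0)Y(v,\,v',\,0)=Y(v,\,v''+v',\,0)=Y(v,\,w,\,0).
\end{equation*}
Swapping back through Lemma~\ref{short-symmetry} turns the left-hand side into $Y(v'',\,v,\,0)Y(v',\,v,\,0)$ and the right-hand side into $Y_{(i)}(w,\,v,\,0)$, which is the claim.

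**Anticipated obstacle.** The delicate point is verifying that all the orthogonality and zero-coordinate hypotheses of Lemmas~\ref{short-symmetry} and~\ref{lm0} are genuinely met at each swap, and that the various $Y$'s with a single pair of zeros are being combined in an order where commutativity (elements lying in a common Levi $\ls k$) is available. In particular one must check that $\lan v'',\,v'\ran=0$ so that the additivity step is valid; this holds because $v''$ vanishes on $\pm j$ while $v'$ is supported there. I expect the bookkeeping of which index supplies the ``two pairs of zeros'' needed to drop the subscript on $Y$ to be the only real subtlety; the algebraic content is entirely carried by the antisymmetry of Lemma~\ref{short-symmetry} combined with the additivity of Lemma~\ref{lm0}.
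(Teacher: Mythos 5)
Your plan proves only the easy case of this lemma and collapses in precisely the case the lemma exists to handle; the ``bookkeeping'' you flag as a possible subtlety is in fact fatal. Both of your key steps silently require $w$ to have at least two pairs of zero coordinates, which is not among the hypotheses: $w=v'+v''$ is guaranteed to vanish only on $\pm i$. The swap $Y_{(i)}(w,\,v,\,0)=Y(v,\,w,\,0)$ is not an instance of Lemma~\ref{short-symmetry}, since that lemma requires \emph{both} vectors to vanish on two pairs of coordinates (one pair in common); when $v''$ is supported on all pairs other than $\pm i,\,\pm j$ --- in particular in the case $l=3$ with $v''$ supported on $\pm k$, which by the remark following the lemma is the only case in which the lemma is actually needed --- the unsubscripted symbol $Y(w,\,\cdot\,,\,\cdot)$ is not even defined (dropping the subscript needs two zero pairs, Lemma~\ref{w-correctness}), and the asserted swap is in substance the very statement being proved. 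The additivity step has the same defect: Lemma~\ref{lm0} does not merely ask that the second arguments vanish where the first argument is supported; it requires a \emph{common} pair of zero coordinates of all three vectors (its proof expands $Y(u,\,v+w,\,0)$ as the commutator $[Y(e_m,\,u,\,0),\,Y(e_{-m},\,(v+w)\eps m,\,0)]$ at such a pair $\pm m$). In your application the first argument is supported on $\pm i$ and one second argument on $\pm j$, so the common pair would have to avoid $\pm i,\,\pm j$ and be a zero pair of $v''$ --- exactly what fails in the critical case.

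What you have correctly handled is the case where $w$ keeps two pairs of zeros, but this is the case the paper itself dismisses in the remark as an obvious consequence of Lemmas~\ref{short-symmetry} and~\ref{w=zz}; the entire content of Lemma~\ref{w-symmetry} (and of Lemma~\ref{lm0}) is the remaining case, relevant for $l=3$. The paper's proof therefore never moves $w$ out of the first slot. It expands $Y_{(i)}(w,\,v,\,0)$ by the definition, splits the long-root factor $Y_{(i)}(w,\,0,\,-v_iv_{-i}\eps i)$ into its $v''$- and $v'$-parts by Lemma~\ref{short-is-three-long} --- which, unlike Lemma~\ref{short-symmetry}, is proved for sums having just the one common zero pair $\pm i$ --- and then reorders the resulting factors, using that most of them lie in $\ls i$. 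The single non-commuting pair produces the correction $[Y(e_i,\,-v'v_i,\,0),\,Y(e_{-i},\,-v''v_{-i},\,0)]=Y(v'',\,v'v_iv_{-i}\eps i,\,0)$, which cancels against the factor $Y(v'',\,-v'v_iv_{-i}\eps i,\,0)$ by an application of Lemma~\ref{lm0} that \emph{is} legitimate: there the first argument is $v''$ and both second arguments are multiples of $v'$, so all three vanish on the common pair $\pm i$ (and $v''$ additionally on $\pm j$). Any repair of your approach would have to reproduce some such direct computation with $w$ kept in the first argument.
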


\begin{rk}
We use this lemma only for the case where $v''$ has only $\pm k$-th non-zero coordinates. If $w$ has at least 2 pairs of non-zero coordinates, the claim is an obvious consequence of Lemmas~\ref{short-symmetry} and \ref{w=zz}. This means that both Lemmas~\ref{w-symmetry} and \ref{lm0} are only required in the case $l=3$ and are not relevant when $l\geq4$.
\end{rk}

\begin{proof}
Using Lemma~\ref{short-is-three-long} one has
\begin{multline*}
Y_{(i)}(w,\,v,\,0)=Y_{(i)}(w,\,0,\,-v_iv_{-i}\eps i)Y(e_i,\,wv_i,\,0)Y(e_{-i},\,wv_{-i},\,0)=\\
=Y(v'',\,0,\,-v_iv_{-i}\eps i)Y(v',\,0,\,-v_iv_{-i}\eps i)Y(v'',\,-v'v_iv_{-i}\eps i,\,0)\cdot\\
\cdot Y(e_i,\,v''v_i,\,0)Y(e_i,\,v'v_i,\,0)Y(e_{-i},\,v''v_{-i},\,0)Y(e_{-i},\,v'v_{-i},\,0).
\end{multline*}
We want to change the order of factors in the above product to obtain $Y(v'',\,v,,0)$, more precisely, we want to put $Y(e_i,\,v''v_i,\,0)$ and $Y(e_{-i},\,v''v_{-i},\,0)$ right after the first factor. Since $Y_{(k)}(v',\,0,\,-v_iv_{-i}\eps i)$ and $Y_{(j)}(v'',\,-v'v_iv_{-i}\eps i,\,0)$ lie in $\ls i$, this elements commute with both $Y(e_i,\,v''v_i,\,0)$ and $Y(e_{-i},\,v''v_{-i},\,0)$. But $Y(e_i,\,v'v_i,\,0)$ does not commute with $Y(e_{-i},\,v''v_{-i},\,0)$, thus we obtain the following.
%
\begin{multline*}
Y_{(i)}(w,\,v,\,0)=\\
=Y(v'',\,v,\,0)Y(v',\,0,\,-v_iv_{-i}\eps i)Y(v'',\,-v'v_iv_{-i}\eps i,\,0)Y(e_i,\,v'v_i,\,0)\cdot\\
\cdot[Y(e_i,\,-v'v_i,\,0),\,Y(e_{-i},\,-v''v_{-i},\,0)]Y(e_{-i},\,v'v_{-i},\,0)
\end{multline*}
Observe that 
\begin{multline*}
[Y(e_i,\,-v'v_i,\,0),\,Y(e_{-i},\,-v''v_{-i},\,0)]=\\=Y(-v'v_i,\,-v''v_{-i}\eps i,\,0)=Y(v'',\,v'v_iv_{-i}\eps i,\,0)\in\ls i
\end{multline*}
by Lemma~\ref{short-symmetry} and thus this element commutes with $Y(e_i,\,v'v_i,\,0)$. Further, by Lemma~\ref{lm0} one has
$$
Y(v'',\,-v'v_iv_{-i}\eps i,\,0)Y(v'',\,v'v_iv_{-i}\eps i,\,0)=Y(v'',\,0,\,0)=1.
$$
Plugging this into the above formula, we get the claim.
\end{proof}

\begin{lm}
\label{conjugation-by-short}
For any $j\not\in\{\pm k\}$, any $u\in V$ and any $a$, $b\in R$, one has
$$
\!\,^{X_{jk}(b)}X(u,\,0,\,a)=X(T_{jk}(b)u,\,0,\,a).
$$
\end{lm}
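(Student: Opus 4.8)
The plan is to adapt the factor-by-factor method of Lemma~\ref{conjugation-by-long} to the short-root generator $X_{jk}(b)$, the key point being to route every factor through a \emph{spectator} index. Since $l\geq3$ and $j\neq\pm k$, there is an index $p\notin\{\pm j,\pm k\}$, and for $l\geq4$ even a pair $p,p'\notin\{\pm j,\pm k\}$ with $p\neq\pm p'$. For any such spectator $m$ one has $X_{jk}(b)\in\ls m$ (as $\ls m=\ps m\cap\ps{-m}$ and $X_{jk}(b)$ lies in both parabolics once $m\notin\{\pm j,\pm k\}$), so $\phi(X_{jk}(b))=T_{jk}(b)$ fixes $e_m$ and $e_{-m}$, and by Lemma~\ref{w-conjugation} conjugation by $X_{jk}(b)$ sends any $Y_{(m)}(\hat u,\hat v,\hat c)$ to $Y_{(m)}(T_{jk}(b)\hat u,T_{jk}(b)\hat v,\hat c)$. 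Note also that $T_{jk}(b)$ fixes the $\pm p$- and $\pm p'$-planes pointwise, so it preserves the condition of vanishing at $\pm p,\pm p'$.

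First I treat the generic case $l\geq4$. Writing $v=e_pu_p+e_{-p}u_{-p}$, $v'=e_{p'}u_{p'}+e_{-p'}u_{-p'}$ and $\tilde{\tilde u}=u-v-v'$, the corollary following Lemma~\ref{short-is-three-long} gives
\[
X(u,0,a)=Y(\tilde{\tilde u},0,a)\,Y(v',0,a)\,Y(v',\tilde{\tilde u}a,0)\,Y(v,0,a)\,Y(v,\tilde{\tilde u}a,0)\,Y(v,v'a,0).
\]
Because $T_{jk}(b)$ fixes $v,v'$ and preserves vanishing at $\pm p,\pm p'$, the target $X(T_{jk}(b)u,0,a)$ admits the identical decomposition with $\tilde{\tilde u}$ replaced by $T_{jk}(b)\tilde{\tilde u}$ and $v,v'$ unchanged; hence it suffices to conjugate each factor. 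The factors not involving $\tilde{\tilde u}$ are immediate via Lemma~\ref{w-conjugation}: $Y(\tilde{\tilde u},0,a)=Y_{(p)}(\tilde{\tilde u},0,a)$ goes to $Y_{(p)}(T_{jk}(b)\tilde{\tilde u},0,a)$, while $Y(v',0,a)=Y_{(p)}(v',0,a)$, $Y(v,0,a)=Y_{(p')}(v,0,a)$ and $Y(v,v'a,0)=Y_{(p')}(v,v'a,0)$ are fixed. For the two cross terms I pivot on the \emph{other} spectator through Lemma~\ref{short-symmetry}: since $v$ and $\tilde{\tilde u}$ both vanish at $\pm p'$ one has $Y(v,\tilde{\tilde u}a,0)=Y(\tilde{\tilde u},va,0)=Y_{(p')}(\tilde{\tilde u},va,0)$, which conjugates to $Y_{(p')}(T_{jk}(b)\tilde{\tilde u},va,0)$ and swaps back to $Y(v,(T_{jk}(b)\tilde{\tilde u})a,0)$; symmetrically $Y(v',\tilde{\tilde u}a,0)$ is handled by pivoting on $p$. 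Reassembling yields $X(T_{jk}(b)u,0,a)$.

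The main obstacle is the case $l=3$, where $\{j,k,p\}$ exhausts the index classes and there is no second spectator. Now $\tilde u:=u-v$ is supported on \emph{both} active planes $\pm j$ and $\pm k$, so in the three-factor decomposition $X(u,0,a)=Y_{(p)}(\tilde u,0,a)\,Y(v,0,a)\,Y(v,\tilde u a,0)$ (Lemma~\ref{w=zz}) the cross term $Y(v,\tilde u a,0)$ can no longer be straightened directly by Lemma~\ref{short-symmetry}: its two arguments share no zero coordinate outside $\pm p$, where $v$ is nonzero. This is precisely the configuration for which Lemmas~\ref{w-symmetry} and \ref{lm0} were proved. The plan is to identify $Y(v,\tilde u a,0)$ with a $Y_{(p)}$-type element — conjugable by $X_{jk}(b)\in\ls p$ through Lemma~\ref{w-conjugation} — by splitting $\tilde u=w_j+w_k$ into its single-plane parts, applying single-plane \ref{short-symmetry} to each piece $Y(v,w_\bullet a,0)=Y(w_\bullet,va,0)$ (now legitimate, since $w_j$ vanishes at $\pm k,\pm p$ and $w_k$ at $\pm j,\pm p$), and combining the pieces through Lemma~\ref{w-symmetry} with pivot $p$; here the additivity Lemma~\ref{lm0} together with the orthogonality $\lan w_j,w_k\ran=0$ ensures that the combination closes without a spurious long-root correction. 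The factor $Y(v,0,a)$ is disposed of by the same device.

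I expect the hard part to be exactly this single-spectator bookkeeping: verifying that the vanishing-coordinate and orthogonality hypotheses of Lemmas~\ref{w-symmetry}, \ref{short-symmetry} and \ref{lm0} are met at each step and that the reshuffling of $Y(v,\tilde u a,0)$ through the $Y_{(p)}$-form is consistent. By contrast, for $l\geq4$ the surplus of spectator indices makes every swap and conjugation routine. Once all factors are transformed, conjugation of $X(u,0,a)$ by $X_{jk}(b)$ produces $X(T_{jk}(b)u,0,a)$, which is the assertion of the lemma.
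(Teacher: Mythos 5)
Your $l\geq4$ argument is correct, and it is genuinely different in bookkeeping from the paper's: you spend \emph{two} spectator planes $\pm p,\pm p'$ on the decomposition, so that every factor either has its first argument vanishing on a spectator plane (and then conjugates by Lemma~\ref{w-conjugation}, since $X_{jk}(b)\in\ls p\cap\ls{p'}$), or can be pivoted into that shape by Lemma~\ref{short-symmetry}. The paper instead runs a single argument uniform in $l\geq3$: with one spectator $i$ it takes $v$ to be the $\pm i$-part of $u$, groups the two \emph{active}-plane parts into $w=v'+v''$, fuses the two cross terms $Y(v,\,v''a,\,0)Y(v,\,v'a,\,0)$ into the single conjugable element $Y_{(i)}(w,\,va,\,0)$ by Lemma~\ref{w-symmetry}, and disposes of the remaining $Y(v,\,\cdot\,,\,\cdot)$ factors by a commutation trick described below. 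Your route buys simplicity when spectators abound; the paper's buys uniformity.

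The genuine gap is in your $l=3$ case, at exactly the factor you dismiss in one sentence: $Y(v,\,0,\,a)$ with $v=e_pu_p+e_{-p}u_{-p}$. The ``same device'' cannot apply to it: a $Y_{(p)}$-type element has, by definition, first argument vanishing at $\pm p$, whereas $v$ is supported precisely there; Lemma~\ref{short-symmetry} is unavailable because the second argument is $0$; and $v$ cannot be split into orthogonal pieces inside its own hyperbolic plane, so Lemma~\ref{short-is-three-long} gives nothing either. Consequently Lemma~\ref{w-conjugation} never applies to this factor when $p$ is the only spectator, and your chain does not close. The missing idea --- which is the crux of the paper's proof --- is to conjugate in the \emph{opposite} direction: $Y_{(j)}(v,\,0,\,a)$ lies in $\ls k$, and writing $X_{jk}(b)=Y(e_{-k},\,-e_jb\eps k,\,0)$, Lemma~\ref{y-conjugated-by-ps} gives
$$
{}^{Y_{(j)}(v,\,0,\,a)}Y(e_{-k},\,-e_jb\eps k,\,0)=Y\big(e_{-k},\,T(v,\,0,\,a)(-e_jb\eps k),\,0\big)=Y(e_{-k},\,-e_jb\eps k,\,0),
$$
since $T(v,\,0,\,a)$ fixes $e_j$ (as $\lan v,\,e_j\ran=0$). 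So this factor simply commutes with $X_{jk}(b)$, which is what the target decomposition of $X(T_{jk}(b)u,\,0,\,a)$ requires. Without this step your $l=3$ case fails. A second, repairable, slip in the same part: the split $Y(v,\,\tilde ua,\,0)=Y(v,\,w_ka,\,0)Y(v,\,w_ja,\,0)$ is not an instance of Lemma~\ref{lm0} --- its hypotheses are violated, because $w_ja$ and $w_ka$ share only the zero pair $\pm p$, where $v$ does not vanish --- but it does follow from Lemma~\ref{w=zz}. With that citation corrected and the commutation of $Y(v,\,0,\,a)$ supplied as above, the rest of your $l=3$ chain (single-plane Lemma~\ref{short-symmetry}, then Lemma~\ref{w-symmetry} with pivot $p$, then Lemma~\ref{w-conjugation}) does go through.
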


\begin{proof}
Fix $i\not\in\{\pm j,\pm k\}$.  Combining Lemmas~\ref{w=zz}, \ref{short-symmetry} and \ref{w-symmetry} we get
\begin{multline*}
X_{(i)}(u,\,0,\,a)=\\
=Y_{(i)}(\tilde u,\,0,\,a)Y(v,\,0,\,a)Y(v,\,\tilde{\tilde{\tilde u}}a,\,0)Y(v,\,v''a,\,0)Y(v,\,v'a,\,0)=\\
=Y_{(i)}(\tilde u,\,0,\,a)Y(v,\,0,\,a)Y(v,\,\tilde{\tilde{\tilde u}}a,\,0)Y_{(i)}(w,\,va,\,0),
\end{multline*}
where 
$$v=e_iu_i+e_{-i}u_{-i},\qquad\tilde u=u-v,\qquad v'=e_ju_j+e_{-j}u_{-j},\qquad\tilde{\tilde u}=\tilde u-v',$$
$$v''=e_ku_k+e_{-k}u_{-k},\qquad\tilde{\tilde{\tilde u}}=\tilde{\tilde u}-v'',\qquad w=v'+v''.$$
Observe that 
$$
e_i\big(T_{jk}(b)u\big)_i+e_{-i}\big(T_{jk}(b)u\big)_{-i}=v=T_{jk}(b)v,
$$
$$
e_j\big(T_{jk}(b)u\big)_j+e_{-j}\big(T_{jk}(b)u\big)_{-j}+e_k\big(T_{jk}(b)u\big)_k+e_{-k}\big(T_{jk}(b)u\big)_{-k}=T_{jk}(b)w,
$$
$$
T_{jk}(b)u-v=T_{jk}(b)\tilde u\quad\text{and}\quad T_{jk}(b)u-v-T_{jk}(b)w=T_{jk}(b)\tilde{\tilde{\tilde u}}.
$$
Since $Y_{(j)}(v,\,0,\,a)$ and $Y_{(j)}(v,\,\tilde{\tilde{\tilde u}}a,\,0)$ lie in $\ls k$ they commute with $X_{jk}(b)=Y(e_{-k},\,-e_ja\eps k,\,0)$. Now, using $X_{jk}(b)\in\ls i$ we obtain the claim.
\end{proof}

\begin{cl*}
Lemmas~{\rm \ref{conjugation-by-long}} and {\rm \ref{conjugation-by-short}} imply that for any $g\in\St\!\Sp(2l,\,R)$, one has
$$
gX(u,\,0,\,a)g\inv=X(\phi(g)u,\,0,\,a).
$$
\end{cl*}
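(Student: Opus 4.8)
The plan is to deduce the conjugation formula for an arbitrary $g\in\St\!\Sp(2l,\,R)$ from the two special cases just established in Lemmas~\ref{conjugation-by-long} and \ref{conjugation-by-short}. The essential observation is that the generators of the Steinberg group $\St\!\Sp(2l,\,R)$ are precisely the elements $X_{jk}(b)$, where $k=-j$ corresponds to the long-root case (Lemma~\ref{conjugation-by-long}) and $k\neq-j$ to the short-root case (Lemma~\ref{conjugation-by-short}). So the formula holds on a generating set. First I would remark that it therefore suffices to show that the class of elements $g$ satisfying the identity
$$
gX(u,\,0,\,a)g\inv=X(\phi(g)u,\,0,\,a)\quad\text{for all }u\in V,\ a\in R
$$
is closed under products and inverses, i.e.\ forms a subgroup of $\St\!\Sp(2l,\,R)$.

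For closure under products, I would take $g$, $h$ in this class and compute, using associativity of conjugation $\!\,^{gh}x=\!\,^g(\!\,^hx)$:
\begin{align*}
(gh)X(u,\,0,\,a)(gh)\inv&=g\big(hX(u,\,0,\,a)h\inv\big)g\inv=g\,X(\phi(h)u,\,0,\,a)\,g\inv\\
&=X(\phi(g)\phi(h)u,\,0,\,a)=X(\phi(gh)u,\,0,\,a),
\end{align*}
where the second equality applies the hypothesis for $h$, the third applies it for $g$ to the vector $\phi(h)u$ (legitimate since the formula for $g$ was assumed for \emph{all} input vectors), and the last uses that $\phi$ is a homomorphism. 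Closure under inverses is similar: applying the identity for $g$ with input $\phi(g)\inv u$ and then conjugating by $g\inv$ yields $g\inv X(u,\,0,\,a)g=X(\phi(g\inv)u,\,0,\,a)$, since $\phi(g\inv)=\phi(g)\inv$. Hence the class is a subgroup containing all the $X_{jk}(b)$, so it is the whole group.

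I do not expect any genuine obstacle here, as the statement is a formal consequence of having verified the formula on generators together with the homomorphism property of $\phi$. The only point requiring a moment's care is that the special cases must be known for \emph{every} target vector and scalar, not just a fixed one, so that the inductive step can feed $\phi(h)u$ (respectively $\phi(g)\inv u$) back into the hypothesis; both Lemmas~\ref{conjugation-by-long} and \ref{conjugation-by-short} are indeed stated with universally quantified $u$, so this causes no trouble. In the write-up it is cleanest simply to invoke that the identity holds on a generating set and extends multiplicatively, rather than spelling out the subgroup argument in full.
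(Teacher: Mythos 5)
Your proposal is correct and follows exactly the argument the paper leaves implicit: the identity holds on the generators $X_{jk}(b)$ by Lemmas~\ref{conjugation-by-long} and \ref{conjugation-by-short}, and since the set of $g$ satisfying it for all $u$ and $a$ is closed under products and inverses (using that $\phi$ is a homomorphism), it is all of $\St\!\Sp(2l,\,R)$. The paper states the corollary without proof precisely because this is the standard extension-from-generators argument, which you have spelled out correctly.
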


\begin{lm}
\label{generating}
The set of elements $\{X(u,\,0,\,a)\mid u\in V,\ a\in R\}$ generates $\St\!\Sp(2l,\,R)$ as a group.
\end{lm}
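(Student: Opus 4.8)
The plan is to reduce the statement to the defining generators of $\St\!\Sp(2l,\,R)$: since the symbols $X_{ij}(a)$ generate the group by definition, it suffices to write each of them as a word in the elements $X(u,\,0,\,a)$. By the corollary to Lemma~\ref{y-decomposition} one has $X_{i,-i}(a)=Y(e_i,\,0,\,a)$ and $X_{ij}(a)=Y(e_{-j},\,-e_ia\eps j,\,0)$ for $i\not\in\{\pm j\}$, so I only have to treat these two families.

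The long-root generators cost almost nothing. For a base vector $e_i$ and any auxiliary index $j\not\in\{\pm i\}$ the coordinates $(e_i)_j$ and $(e_i)_{-j}$ vanish, so in the defining expression for $X_{(j)}(e_i,\,0,\,a)$ the two factors whose leading vector is $e_j(e_i)_j+e_{-j}(e_i)_{-j}=0$ collapse to the identity (both are of the shape $Y(0,\,\cdot\,,\,\cdot\,)$, which reduces to a commutator of trivial unipotents via the corollary to Lemma~\ref{y-add}). Hence $X(e_i,\,0,\,a)=X_{(j)}(e_i,\,0,\,a)=Y_{(j)}(e_i,\,0,\,a)=Y(e_i,\,0,\,a)=X_{i,-i}(a)$, the last two equalities being the remark following Lemma~\ref{w-correctness} and the corollary to Lemma~\ref{y-decomposition}. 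The identical degeneration, combined with Lemma~\ref{correctness} to drop the index, yields the general principle $Y_{(h)}(w,\,0,\,a)=X(w,\,0,\,a)$ whenever $w_h=w_{-h}=0$, which is what I shall apply repeatedly.

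For the short-root generators I would use Lemma~\ref{short-is-three-long} to trade one short-root-type symbol for a product of long-root-type ones. Since $l\geq3$ I may fix an index $h\not\in\{\pm i,\,\pm j\}$; the vectors $u=e_i$ and $v=e_{-j}$ are orthogonal (as $i\neq j$) and both vanish in their $\pm h$-coordinates, so Lemma~\ref{short-is-three-long} with distinguished index $h$ gives, for any $c\in R$,
$$Y_{(h)}(e_i+e_{-j},\,0,\,c)=Y(e_i,\,0,\,c)\,Y(e_{-j},\,0,\,c)\,Y(e_{-j},\,e_ic,\,0).$$
Solving for the last factor and setting $c=-a\eps j$ presents $X_{ij}(a)=Y(e_{-j},\,-e_ia\eps j,\,0)$ as the product $Y(e_{-j},\,0,\,c)\inv\,Y(e_i,\,0,\,c)\inv\,Y_{(h)}(e_i+e_{-j},\,0,\,c)$. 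By the general principle of the previous paragraph each of these three symbols equals the corresponding $X(\cdot,\,0,\,\cdot)$, since $e_i$, $e_{-j}$ and $e_i+e_{-j}$ all vanish in positions $\pm h$. Thus every standard generator lies in the subgroup generated by $\{X(u,\,0,\,a)\}$, which therefore is all of $\St\!\Sp(2l,\,R)$.

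The only genuine input is Lemma~\ref{short-is-three-long}; everything else is bookkeeping. The single point that must not be overlooked is the existence of the auxiliary index $h\not\in\{\pm i,\,\pm j\}$, which is exactly where the hypothesis $l\geq3$ enters, together with the routine check that the degenerate instances of the $Y$- and $X$-symbols (leading vector $0$, or a base vector) collapse as asserted.
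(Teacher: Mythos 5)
Your proposal is correct and is essentially the paper's own argument: the paper likewise obtains $X_{i,-i}(a)=X_{(j)}(e_i,\,0,\,a)$ by observing the degenerate factors $Y(0,\,\cdot\,,\,\cdot)$ collapse, and then expresses a short-root generator as a product of three long-root type elements via Lemma~\ref{short-is-three-long} (the paper writes $X_{jk}(a)=Y(e_{-k},\,-e_ja\eps k,\,0)=X_{(i)}(v,\,0,\,-a)X_{(i)}(u,\,0,\,-a)X_{(i)}(u+v,\,0,\,a)$ with $u=e_{-k}$, $v=-e_j\eps k$, which matches your computation up to naming and sign conventions). Your explicit isolation of the principle $Y_{(h)}(w,\,0,\,a)=X(w,\,0,\,a)$ for $w_h=w_{-h}=0$ is a point the paper leaves implicit in its final line, but it is the same reasoning.
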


\begin{proof}
Firstly, choosing some $i$ and $j$ such that $\Card\{\pm i,\pm j\}=4$ one has
$$
X_{(j)}(e_i,\,0,\,a)=Y(e_i,\,0,\,a)Y(0,\,0,\,a)Y(0,\,e_ia,\,0)=Y(e_i,\,0,\,a)=X_{i,-i}(a).
$$
Now, choosing $k\not\in\{\pm i,\pm j\}$, taking $u=e_{-k}$, $v=-e_j\eps k$ and any $a\in R$, and using Lemma~\ref{short-is-three-long} we obtain
\begin{multline*}
X_{jk}(a)=Y(u,\,va,\,0)=\\
=Y(v,\,0,\,-a)Y(u,\,0,\,-a)Y_{(i)}(u+v,\,0,\,a)=\\
=X_{(i)}(v,\,0,\,-a)X_{(i)}(u,\,0,\,-a)X_{(i)}(u+v,\,0,\,a).
\end{multline*}
\end{proof}

\begin{proof}[Proof of the Main Theorem]
The Main Theorem asserts that
$$\Ker\phi\subseteq\Cent\St\!\Sp(2l,\,R).$$
Indeed, any $g\in\Ker\phi$ commutes with the long-type transvections $X(u,\,0,\,a)$ by Lemmas~\ref{conjugation-by-long} and \ref{conjugation-by-short}, so that it is central by Lemma~\ref{generating}.
\end{proof}

\section{Relations among ESD-generators}

In the two remaining sections we establish another presentation for the symplectic Steinberg group, similar to the one obtained by van der Kallen in the linear case~\cite{vdK}.

\begin{lm}
\label{long-additivity}
For $u\in V$, $a$, $b\in R$ one has
$$
X(u,\,0,\,a)X(u,\,0,\,b)=X(u,\,0,\,a+b).
$$
\end{lm}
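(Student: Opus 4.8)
The plan is to work with the six-factor decomposition of $X(u,\,0,\,a)$ recorded in the corollary to Lemma~\ref{short-is-three-long}. Fix two indices $i\neq j$; since $l\geq3$ there is a third index $k\not\in\{\pm i,\pm j\}$. Setting $v=e_iu_i+e_{-i}u_{-i}$, $v'=e_ju_j+e_{-j}u_{-j}$ and $\tilde{\tilde u}=u-v-v'$, that corollary writes
$$
X(u,\,0,\,a)=Y(\tilde{\tilde u},\,0,\,a)Y(v',\,0,\,a)Y(v',\,\tilde{\tilde u}a,\,0)Y(v,\,0,\,a)Y(v,\,\tilde{\tilde u}a,\,0)Y(v,\,v'a,\,0).
$$
The decisive feature is that each of the three vectors $\tilde{\tilde u}$, $v$, $v'$ has at least two pairs of zero coordinates, so every one of the six factors is a genuine index-free generator to which all the additivity lemmas apply directly. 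In particular each long-root factor equals $X(\cdot,\,0,\,\cdot)$, so the conjugation formula is available for it.

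First I would record that each of the six one-parameter families is additive in its scalar. For the long-root factors $Y(\tilde{\tilde u},\,0,\,\cdot)$, $Y(v',\,0,\,\cdot)$, $Y(v,\,0,\,\cdot)$ this is Lemma~\ref{long-add} with zero middle argument. For the short-root factors one splits $\tilde{\tilde u}(a+b)=\tilde{\tilde u}a+\tilde{\tilde u}b$ (and likewise $v'(a+b)$) and applies Lemma~\ref{lm0}; its orthogonality hypotheses hold because the two middle vectors are parallel, and the zero pair of the base vector not carried by the middles (e.g.\ $\pm i$ for the base $v'$) supplies the index required by Lemma~\ref{lm0}.

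Next I would show that any two distinct factors commute, so that the interleaved product $\prod_p F_p(a)\cdot\prod_p F_p(b)$ can be reshuffled into $\prod_p F_p(a)F_p(b)$. Whenever one of the two factors is a long-root generator $X(w,\,0,\,c)$ (that is, whenever the pair involves $Y(\tilde{\tilde u},\,0,\,\cdot)$, $Y(v',\,0,\,\cdot)$ or $Y(v,\,0,\,\cdot)$), I would conjugate it by the other factor $g$ and invoke the conjugation formula $gX(w,\,0,\,c)g\inv=X(\phi(g)w,\,0,\,c)$ established after Lemma~\ref{conjugation-by-short}: here $\phi(g)$ is an ESD-transformation whose defining vectors lie among $\{\tilde{\tilde u},\,v,\,v'\}\setminus\{w\}$, all orthogonal to $w$, so $\phi(g)w=w$ and the two factors commute. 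This disposes of every pair except the three among the short-root factors $Y(v',\,\tilde{\tilde u}a,\,0)$, $Y(v,\,\tilde{\tilde u}b,\,0)$, $Y(v,\,v'a,\,0)$. For those I would rebase via Lemma~\ref{short-symmetry} so that the pair shares a common first vector, and then let them commute through Lemma~\ref{lm0} (equivalently Lemma~\ref{w=zz}), exactly as in the proof of Lemma~\ref{correctness}.

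Combining the reshuffled product through the six additivity statements then reproduces the same six-factor decomposition with scalar $a+b$, which is $X(u,\,0,\,a+b)$. The main obstacle is the commutation of the short-root factors: when $l=3$ the indices $\pm i,\pm j,\pm k$ are all used up and no free index remains, so a naive Levi-membership argument via Lemma~\ref{w-conjugation} fails. This is precisely the delicate point flagged in the remark after Lemma~\ref{w-symmetry}, and it is overcome by the rebasing trick of Lemma~\ref{short-symmetry} together with Lemma~\ref{lm0} rather than by counting indices.
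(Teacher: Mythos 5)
Your skeleton (six commuting, scalar-additive factors, then a reshuffle of the interleaved product) is reasonable, and your treatment of the additivity claims and of every pair containing a long-root factor is correct. The gap is in the pairwise commutation of the short factors: a full reshuffle of $\prod_pF_p(a)\cdot\prod_pF_p(b)$ into $\prod_pF_p(a)F_p(b)$ forces you to commute \emph{every} unordered pair, including the two pairs involving the last factor $Y(v,\,v'\cdot,\,0)$, namely against $Y(v',\,\tilde{\tilde u}\cdot,\,0)$ and against $Y(v,\,\tilde{\tilde u}\cdot,\,0)$, and your prescribed tools do not reach these when $l\geq4$. Rebasing by Lemma~\ref{short-symmetry} to the common first vector ($v'$, resp.\ $v$) leaves the middles $\tilde{\tilde u}a$ and $vb$ (resp.\ $\tilde{\tilde u}a$ and $v'b$). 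Lemma~\ref{lm0} is then inapplicable: for a generic $u$ the only pair of coordinates where \emph{both} middles vanish is $\pm j$ (resp.\ $\pm i$), which is exactly the support of the base $v'$ (resp.\ $v$), so the hypothesis that base and both middles vanish on a common pair fails. Lemma~\ref{w=zz} splits off only a middle concentrated on a single zero pair of the base, so it yields $Y(v',\,\tilde{\tilde u}a+vb,\,0)=Y(v',\,\tilde{\tilde u}a,\,0)Y(v',\,vb,\,0)$ but \emph{not} the opposite order, unless $\tilde{\tilde u}$ itself is concentrated on one pair --- which happens exactly when $l=3$. So your argument covers these two pairs only for $l=3$, and your closing diagnosis is inverted: the pair that the Lemma~\ref{correctness}-style double splitting genuinely handles is $\bigl(Y(v',\,\tilde{\tilde u}a,\,0),\,Y(v,\,\tilde{\tilde u}b,\,0)\bigr)$, where rebasing to the common first vector $\tilde{\tilde u}$ makes both middles single-pair vectors $v'a$, $vb$ and Lemma~\ref{w=zz} splits both ways; whereas for the pairs involving $Y(v,\,v'\cdot,\,0)$ it is precisely the Levi argument you dismiss that works for \emph{every} $l\geq3$: by the remark after Lemma~\ref{w-conjugation} one has $Y_{(j)}(v,\,v'b,\,0)\in\ls k$, and by Lemma~\ref{w-conjugation} it fixes $Y_{(k)}(v',\,\tilde{\tilde u}a,\,0)$, since $T(v,\,v'b,\,0)$ fixes both $v'$ and $\tilde{\tilde u}$; no fourth index is ever needed.

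It is worth seeing how the paper sidesteps all of this. Its proof is sequential rather than a global reshuffle: it writes the short factors with first vector $\tilde{\tilde u}$, i.e.\ $X(u,\,0,\,a)=Y(\tilde{\tilde u},\,0,\,a)Y(v',\,0,\,a)Y(\tilde{\tilde u},\,v'a,\,0)Y(v,\,0,\,a)Y(\tilde{\tilde u},\,va,\,0)Y(v,\,v'a,\,0)$, appends the factors of $X(u,\,0,\,b)$ one at a time, and moves each new factor past the \emph{whole} block using the corollary to Lemmas~\ref{conjugation-by-long} and~\ref{conjugation-by-short}: every appended factor commutes with $X(u,\,0,\,a)$ itself, because its image under $\phi$ is an ESD-transformation built from $\tilde{\tilde u}$, $v$, $v'$ and hence fixes $u$. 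With that ordering the only short--short commutation ever required is $[Y(\tilde{\tilde u},\,vb,\,0),\,Y(\tilde{\tilde u},\,v'a,\,0)]=1$ --- the benign pair --- and the last factor $Y(v,\,v'b,\,0)$ lands adjacent to its mate and never has to move. To rescue your version as stated you would have to either run the Levi argument for the pairs involving $Y(v,\,v'\cdot,\,0)$, or first peel $\tilde{\tilde u}$ into single-pair pieces by iterating Lemma~\ref{w=zz} and then apply Lemma~\ref{lm0} piecewise using a fourth index (available only for $l\geq4$, forcing a case split at $l=3$).
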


\begin{proof}
Choosing $j\not\in\{\pm i\}$ and decomposing $X(u,\,0,\,a)$ with the use of Lemmas~\ref{w=zz}, \ref{short-is-three-long} and \ref{short-symmetry}, one has
\begin{multline*}
X(u,\,0,\,a)=\\
=Y(\tilde{\tilde u},\,0,\,a)Y(v',\,0,\,a)Y(\tilde{\tilde u},\,v'a,\,0)Y(v,\,0,\,a)Y(\tilde{\tilde u},\,va,\,0)Y(v,\,v'a,\,0).
\end{multline*}
Since $\tilde{\tilde u}$, $v$ and $v'$ are orthogonal to $\tilde{\tilde u}$, the vector $u$ is also orthogonal to $\tilde{\tilde u}$, and thus 
$$
\!\,^{Y(\tilde{\tilde u},\,0,\,b)}X(u,\,0,\,a)=X(T(\tilde{\tilde u},\,0,\,b)u,\,0,\,a)=X(u,\,0,\,a).
$$
By Lemma~\ref{long-add} it follows that
\begin{multline*}
X(u,\,0,\,a)Y(\tilde{\tilde u},\,0,\,b)=\\
=Y(\tilde{\tilde u},\,0,\,a+b)Y(v',\,0,\,a)Y(\tilde{\tilde u},\,v'a,\,0)Y(v,\,0,\,a)Y(\tilde{\tilde u},\,va,\,0)Y(v,\,v'a,\,0).
\end{multline*}
The same argument as above shows that $Y(v',\,0,\,b)$ commutes with both $X(u,\,0,\,a)$ and $Y(\tilde{\tilde u},\,0,\,a+b)$, so that again by Lemma~\ref{long-add}
\begin{multline*}
X(u,\,0,\,a)Y(\tilde{\tilde u},\,0,\,b)Y(v',\,0,\,b)=Y(\tilde{\tilde u},\,0,\,a+b)Y(v',\,0,\,a+b)\cdot\\
\cdot Y(\tilde{\tilde u},\,v'a,\,0)Y(v,\,0,\,a)Y(\tilde{\tilde u},\,va,\,0)Y(v,\,v'a,\,0).
\end{multline*}
Repeating this procedure, one eventually obtains the desired result.
\end{proof}

\begin{lm}
\label{long-scalar}
For $u\in V$ such that $u_i=u_{-i}=u_j=u_{-j}=0$ and $a,\,b\in R$, one has
$$
Y(ub,\,0,\,a)=Y(u,\,0,\,ab^2).
$$
\end{lm}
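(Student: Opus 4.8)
The plan is to reduce the identity to base vectors and then to single hyperbolic planes. Since $\phi\big(Y(ub,\,0,\,a)\big)=T(ub,\,0,\,a)=T(u,\,0,\,ab^2)=\phi\big(Y(u,\,0,\,ab^2)\big)$, both sides already have the same image in $\Ep(2l,\,R)$; the content of the lemma is therefore the equality of the two chosen lifts inside $\St\!\Sp(2l,\,R)$. Throughout I may replace the reference index by any convenient one, since $Y(u,\,0,\,\cdot)$ does not depend on it by Lemma~\ref{correctness}.

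First I would settle the case where $u=e_p$ is a base vector, which is essentially free. Pick $k\not\in\{\pm p\}$. Unfolding the definition gives $Y(e_pb,\,0,\,a)=Y_{(k)}(e_pb,\,0,\,a)=[Y(e_k,\,e_pb,\,0),\,Y(e_{-k},\,0,\,a)]Y(e_{-k},\,e_pab\eps{-k},\,0)$. On the other hand, the corollary to Lemma~\ref{z-decomposition-for-y}, applied with $v=0$ and the same $k$, reads $Y(e_p,\,0,\,ab^2)=[Y(e_k,\,e_pb,\,0),\,Y(e_{-k},\,0,\,a)]Y(e_{-k},\,e_pab\eps{-k},\,0)$. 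The two right-hand sides coincide verbatim, so $Y(e_pb,\,0,\,a)=Y(e_p,\,0,\,ab^2)$; in particular the lemma holds whenever $u$ is supported on a single basis vector.

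Next I would reduce an arbitrary $u$ to single hyperbolic planes. Decompose $u=w_1+\dots+w_m$, where $w_s=e_{p_s}u_{p_s}+e_{-p_s}u_{-p_s}$ is the part of $u$ living in the plane spanned by $e_{p_s}$ and $e_{-p_s}$; distinct planes are mutually orthogonal, so $\lan w_s,\,w_t\ran=0$ for $s\neq t$. Writing $u=w_1+r$ with $r=w_2+\dots+w_m$ and $\lan w_1,\,r\ran=0$, Lemma~\ref{short-is-three-long} gives $Y(ub,\,0,\,a)=Y(w_1b,\,0,\,a)\,Y(rb,\,0,\,a)\,Y(rb,\,w_1ba,\,0)$. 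By the single-plane case below one has $Y(w_1b,\,0,\,a)=Y(w_1,\,0,\,ab^2)$, by induction on $m$ one has $Y(rb,\,0,\,a)=Y(r,\,0,\,ab^2)$, and the cross term is handled by Lemma~\ref{short-symmetry}, which moves the scalar so that $Y(rb,\,w_1ba,\,0)=Y(w_1,\,rab^2,\,0)=Y(r,\,w_1ab^2,\,0)$. Applying Lemma~\ref{short-is-three-long} once more in the opposite direction recombines the three factors into $Y(u,\,0,\,ab^2)$.

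It remains to treat a single hyperbolic plane $u=e_pc+e_{-p}d$, and this is where I expect the real work. Choosing $q\not\in\{\pm p\}$, Lemmas~\ref{y-add} and \ref{y-decomposition} express $Y(e_q,\,u,\,0)$ as a product of two short-root generators and one long-root generator $X_{q,-q}(cd\eps p)$, the last factor recording the non-trivial pairing $\lan e_pc,\,e_{-p}d\ran$; the definition then writes $Y(u,\,0,\,a)=Y_{(q)}(u,\,0,\,a)$ as the commutator of this product with $Y(e_{-q},\,0,\,a)=X_{-q,q}(a)$, times the correction $Y(e_{-q},\,ua\eps{-q},\,0)$. The plan is to expand this commutator by the Steinberg relations and to compare it with the analogous expansion of $Y(u,\,0,\,ab^2)$. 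The main obstacle is precisely the long-root factor $X_{q,-q}(cd\eps p)$: its commutator with $X_{-q,q}(a)$ is not a single generator but a genuinely non-unipotent combination, and under $c\mapsto cb$, $d\mapsto db$ it becomes $X_{q,-q}(cdb^2\eps p)$ rather than transforming like the $a$-slot. Reconciling this $b^2$ with the passage $a\mapsto ab^2$ on the other side is the heart of the computation, and I expect it to rely on the commutator formulae R4 and R5 together with the additivity of Lemmas~\ref{y-add} and \ref{long-additivity} to absorb the remaining lower-order cross terms.
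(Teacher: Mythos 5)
Your first two steps are sound, and they organize the argument differently from the paper: the base-vector case does follow verbatim from the corollary to Lemma~\ref{z-decomposition-for-y} (with $v=0$), and the reduction of a general $u$ to its hyperbolic-plane components via Lemmas~\ref{short-is-three-long} and \ref{short-symmetry} is correctly executed (though index-independence of $Y$ is Lemma~\ref{w-correctness}, not Lemma~\ref{correctness}, which concerns $X_{(i)}$). But the proof has a genuine gap exactly where you flag ``the real work'': the single-plane case $u=e_pc+e_{-p}d$ with $cd\neq0$ is never proved, and it is not a corner case. For $l=3$ a vector with two pairs of zero coordinates is supported on a \emph{single} hyperbolic plane, so your reduction is vacuous and the missing case is the entire lemma; for $l\geq4$ it is still the base of your induction. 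Worse, the route you sketch for it cannot work as stated: after decomposing $Y(e_q,u,0)$ by Lemma~\ref{y-decomposition}, the long-root factor $X_{q,-q}(\pm cd)$ must be commuted against $Y(e_{-q},0,a)=X_{-q,q}(a)$, and there is \emph{no} Steinberg relation governing $[X_{q,-q}(s),\,X_{-q,q}(t)]$ --- the paper points this out explicitly when defining the relations (R3 and R4 require $i\neq k$, so no relation involves a commutator of opposite root elements). R4 and R5 never have opposite long roots in both slots, so ``expanding the commutator by the Steinberg relations'' is precisely the rank-one computation that the whole apparatus of the paper is built to avoid.

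The paper's own proof is uniform and never splits $u$ at all, and it contains the idea your plan is missing. It writes $Y_{(i)}(ub,\,0,\,a)=[[y\inv,\,z],\,x]\,Y(e_{-i},\,uba\eps{-i},\,0)$ with $x=Y(e_{-i},\,0,\,a)$, $y=Y(e_j,\,u,\,0)$, $z=Y(e_{-j},\,-e_ib\eps{j},\,0)$, so that $[y\inv,\,z]=Y(e_i,\,ub,\,0)$, and then applies the Hall--Witt identity $^y[[y\inv,\,z],\,x]={}^z[y,\,[z\inv,\,x]]\cdot{}^x[z,\,[x\inv,\,y]]$. Since $[y\inv,\,z]$ and $x$ both commute with $y$, and $[x\inv,\,y]=1$, everything collapses to the single commutator ${}^z[y,\,[z\inv,\,x]]$; here $[z\inv,\,x]$ is a product of two elements of one unipotent radical, computed by Lemma~\ref{y-add}, and this is exactly where $ab^2$ is created: $[z\inv,\,x]=Y(e_{-j},\,e_{-i}ab\eps{i}\eps{j},\,ab^2)$. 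The two multiplications by $b$ are thus absorbed into the third slot by the Heisenberg-type additivity inside a single unipotent radical, never through a commutator of opposite root elements. To complete your argument you would need this Hall--Witt computation (or an equivalent new idea) for a single plane --- at which point your plane-by-plane reduction becomes unnecessary, since the same argument handles all $u$ with two pairs of zeros at once.
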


\begin{proof}
Decompose $Y(ub,\,0,\,a)$ as follows
\begin{multline*}
Y_{(i)}(ub,\,0,\,a)=[Y(e_i,\,ub,\,0),\,Y(e_{-i},\,0,\,a)]Y(e_{-i},\,uba\eps{-i},\,0)=\\
=[Y_{(j)}(-u,\,-e_ib),\,Y(e_{-i},\,0,\,a)]Y(e_{-i},\,uba\eps{-i},\,0)=\\
=[[Y(e_j,\,-u,\,0),\,Y(e_{-j},\,-e_ib\eps j,\,0)],\,Y(e_{-i},\,0,\,a)]Y(e_{-i},\,uba\eps{-i},\,0).
\end{multline*}
Now, we use the Hall--Witt identity 
$$\!\,^y[[y\inv,\,z],\,x]=\!\,^z[y,\,[z\inv,\,x]]\cdot\,^x[z,\,[x\inv,\,y]]$$
for $x=Y(e_{-i},\,0,\,a)$, $y=Y(e_j,\,u,\,0)$ and $z=Y(e_{-j},\,-e_ib\eps{j},\,0)$. Observe that
$$
Y_{(i)}(ub,\,0,\,a)=[[y\inv,\,z],\,x]Y(e_{-i},\,uba\eps{-i},\,0).
$$
Using the fact, that both $Y(e_i,\,ub,\,0)$, $Y(e_{-i},\,0,\,a)\in\ls j$ commute with $Y(e_j,\,u,\,0)$, we get
$$[[y\inv,\,z],\,x]=\!\,^y[[y\inv,\,z],\,x].$$
Next, $[x\inv,\,y]=1$ using conjugation formula for $x\inv$. Further,
\begin{multline*}
[z\inv,\,x]=z\inv\cdot\,^xz
=Y(e_{-j},\,e_ib\eps j,\,0)Y(e_{-j},\,e_ib\eps{-j}+e_{-i}a\eps{-i}b\eps{-j},\,0)=\\
=Y(e_{-j},\,e_{-i}ab\eps{i}\eps{j},\,ab^2)=Y(e_{-j},\,0,\,ab^2)Y(e_{-j},\,e_{-i}ab\eps{i}\eps{j},\,0).
\end{multline*}
Then, using $[a,\,bc]=[a,\,b]\cdot\!\,^b[a,\,c]$ one gets
\begin{multline*}
[y,\,[z\inv,\,x]]=[Y(e_j,\,u,\,0),\,Y(e_{-j},\,0,\,ab^2)Y(e_{-j},\,e_{-i}ab\eps{i}\eps{j},\,0)]=\\
=[Y(e_j,\,u,\,0),\,Y(e_{-j},\,0,\,ab^2)]\cdot\,^{Y(e_{-j},\,0,\,ab^2)}Y_{(j)}(u,\,e_{-i}ab\eps i,\,0)=\\
=[Y(e_j,\,u,\,0),\,Y(e_{-j},\,0,\,ab^2)]\cdot Y(e_{-i},\,uab\eps i,\,0).
\end{multline*}
Now,
\begin{multline*}
Y(ub,\,0,\,a)=\\
=\!\,^y[[y\inv,\,z],\,x]Y(e_{-i},\,uba\eps{-i},\,0)=\!\,^z[y,\,[z\inv,\,x]]Y(e_{-i},\,uba\eps{-i},\,0)=\\
=\!\,^z[Y(e_j,\,u,\,0),\,Y(e_{-j},\,0,\,ab^2)][Y(e_{-j},\,-e_ib\eps{j},\,0),\,Y(e_{-i},\,uab\eps i,\,0)]
\end{multline*}
At this point, using
\begin{multline*}
[Y(e_{-j},\,-e_ib\eps{j},\,0),\,Y(e_{-i},\,uab\eps i,\,0)]=Y_{(i)}(-e_{-j}b\eps j,\,uab,\,0)=\\
=Y(e_{-j},\,uab^2\eps{-j},\,0)=\!\,^zY(e_{-j},\,uab^2\eps{-j},\,0)
\end{multline*}
we finally obtain
\begin{multline*}
Y(ub,\,0,\,a)=\!\,^z[Y(e_j,\,u,\,0),\,Y(e_{-j},\,0,\,ab^2)]\cdot\!\,^zY_{(j)}(u,\,0,\,ab^2)=\\
=\!\,^zY_{(j)}(u,\,0,\,ab^2)=Y_{(j)}(u,\,0,\,ab^2).
\end{multline*}
\end{proof}

\begin{lm}
\label{x-long-scalar}
For $u\in V$, $a$, $b\in R$, one has
$$
X(ub,\,0,\,a)=X(u,\,0,\,ab^2).
$$
\end{lm}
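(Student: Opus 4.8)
The plan is to reduce everything to the already established scaling relation of Lemma~\ref{long-scalar} by decomposing $X(u,\,0,\,a)$ into factors whose vector arguments each carry \emph{two} pairs of zero coordinates, so that Lemma~\ref{long-scalar} applies to every factor separately. Fixing indices $i$, $j$ with $\Card\{\pm i,\pm j\}=4$ (possible since $l\geq3$), I would start from the six-term decomposition furnished by the corollary to Lemma~\ref{short-is-three-long},
$$
X_{(i)}(u,\,0,\,a)=Y(\tilde{\tilde u},\,0,\,a)Y(v',\,0,\,a)Y(v',\,\tilde{\tilde u}a,\,0)Y(v,\,0,\,a)Y(v,\,\tilde{\tilde u}a,\,0)Y(v,\,v'a,\,0),
$$
with $v=e_iu_i+e_{-i}u_{-i}$, $v'=e_ju_j+e_{-j}u_{-j}$ and $\tilde{\tilde u}=u-v-v'$. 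The crucial feature is that each of $v$, $v'$, $\tilde{\tilde u}$ has at least two pairs of zero coordinates. Replacing $u$ by $ub$ scales each of these three vectors by $b$, so the very same decomposition yields
$$
X_{(i)}(ub,\,0,\,a)=Y(\tilde{\tilde u}b,\,0,\,a)Y(v'b,\,0,\,a)Y(v'b,\,\tilde{\tilde u}ab,\,0)Y(vb,\,0,\,a)Y(vb,\,\tilde{\tilde u}ab,\,0)Y(vb,\,v'ab,\,0).
$$

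Next I would transform the six factors one at a time, in place. To the three long-root factors $Y(\tilde{\tilde u}b,\,0,\,a)$, $Y(v'b,\,0,\,a)$, $Y(vb,\,0,\,a)$ I apply Lemma~\ref{long-scalar} directly, obtaining $Y(\tilde{\tilde u},\,0,\,ab^2)$, $Y(v',\,0,\,ab^2)$, $Y(v,\,0,\,ab^2)$. For each of the three short-root factors I use Lemma~\ref{short-symmetry}, which moves a scalar across the two vector slots: for instance $Y(v'b,\,\tilde{\tilde u}ab,\,0)=Y(\tilde{\tilde u},\,v'ab^2,\,0)=Y(v',\,\tilde{\tilde u}ab^2,\,0)$, and analogously $Y(vb,\,\tilde{\tilde u}ab,\,0)=Y(v,\,\tilde{\tilde u}ab^2,\,0)$ and $Y(vb,\,v'ab,\,0)=Y(v,\,v'ab^2,\,0)$. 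Reassembling the transformed factors in the same order reproduces exactly the decomposition of $X_{(i)}(u,\,0,\,ab^2)$, and since $X(u,\,0,\,a)=X_{(i)}(u,\,0,\,a)$ is independent of $i$ by Lemma~\ref{correctness}, the identity $X(ub,\,0,\,a)=X(u,\,0,\,ab^2)$ follows.

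Because the rewriting is carried out factor by factor without permuting the product, no commutator corrections arise, which is why it is convenient to work with the fully expanded six-term form rather than to reorder anything. The routine part is verifying the zero-coordinate and orthogonality hypotheses of Lemma~\ref{short-symmetry} at each application; these hold automatically since $v$, $v'$ and $\tilde{\tilde u}$ occupy mutually orthogonal coordinate blocks and scaling by $b$ or $ab$ preserves both the zeros and the orthogonality.

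The genuinely substantive point — and the reason for using the six-term decomposition rather than the shorter three-term one $X_{(i)}(u,\,0,\,a)=Y_{(i)}(\tilde u,\,0,\,a)Y(v,\,0,\,a)Y(v,\,\tilde ua,\,0)$ — is that Lemma~\ref{long-scalar} requires its vector to have two pairs of zeros, while $\tilde u=u-v$ is only guaranteed to have the single pair at $\pm i$. Splitting off $v'$ and isolating $\tilde{\tilde u}$ repairs exactly this deficiency, so that all three long-root factors become amenable to Lemma~\ref{long-scalar}. Getting this reduction to close up is the main thing I expect to require care.
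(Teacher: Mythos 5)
Your proof is correct and follows essentially the same route as the paper: both decompose $X(ub,\,0,\,a)$ into the six-term product of $Y$-factors supported on the blocks $v$, $v'$, $\tilde{\tilde u}$ (each with two pairs of zeros), then apply Lemma~\ref{long-scalar} to the long-root factors and Lemma~\ref{short-symmetry} to push the scalar $b$ into a factor of $b^2$ in the short-root ones, and reassemble into $X(u,\,0,\,ab^2)$. The only cosmetic difference is that the paper first rewrites the short factors via Lemma~\ref{short-symmetry} so that $\tilde{\tilde u}$ sits in the first slot, whereas you apply that lemma twice within each factor; the substance is identical.
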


\begin{proof}
Decomposing $X(ub,\,0,\,a)$ with the use of Lemmas~\ref{w=zz}, \ref{short-is-three-long} and \ref{short-symmetry}, and applying Lemmas~\ref{long-scalar} and \ref{short-symmetry}, we get
\begin{multline*}
X(ub,\,0,\,a)=Y(\tilde{\tilde u}b,\,0,\,a)Y(v'b,\,0,\,a)Y(\tilde{\tilde u}b,\,v'ba,\,0)Y(vb,\,0,\,a)\cdot\\
\cdot Y(\tilde{\tilde u}b,\,vba,\,0)Y(vb,\,v'ba,\,0)=Y(\tilde{\tilde u},\,0,\,ab^2)Y(v',\,0,\,ab^2)Y(\tilde{\tilde u},\,v'ab^2,\,0)\cdot\\
\cdot Y(v,\,0,\,ab^2)Y(\tilde{\tilde u},\,vab^2,\,0)Y(v,\,v'ab^2,\,0)=X(u,\,0,\,ab^2).
\end{multline*}

\end{proof}

Next, we have to define short-root ESD-generators.

\begin{df}
For $u$, $v\in V$ such that $\lan u,\,v\ran=0$, set
$$
X(v,\,u,\,0)=X(v,\,0,\,-1)X(u,\,0,\,-1)X(u+v,\,0,\,1).
$$
\end{df}

\begin{lm}
\label{short-obvious}
For $g\in\St\!\Sp(2l,\,R)$ and $u$, $v\in V$ such that $\lan u,\,v\ran=0$, $a\in R$, one has
\begin{enumerate}
\item $X(v,\,u,\,0)=X(u,\,v,\,0);$
\item $g\,X(v,\,u,\,0)g\inv=X(\phi(g)u,\,\phi(g)v,\,0);$
\item $X(u,\,ua,\,0)=X(u,\,0,\,2a)$.
\end{enumerate}
\end{lm}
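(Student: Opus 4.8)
The plan is to derive all three statements directly from the definition $X(v,\,u,\,0)=X(v,\,0,\,-1)X(u,\,0,\,-1)X(u+v,\,0,\,1)$, together with the additivity of Lemma~\ref{long-additivity}, the scalar rule of Lemma~\ref{x-long-scalar}, and the conjugation formula for long-root generators established in the corollary to Lemma~\ref{conjugation-by-short}. It is convenient to prove the symmetry a) first, since b) will rely on it.

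For a) I would first record that whenever $\lan u,\,v\ran=0$ the long-root generators $X(u,\,0,\,-1)$ and $X(v,\,0,\,-1)$ commute. Indeed, applying the conjugation formula with $g=X(u,\,0,\,-1)$ gives $\,^{X(u,\,0,\,-1)}X(v,\,0,\,-1)=X\big(\phi(X(u,\,0,\,-1))v,\,0,\,-1\big)=X\big(T(u,\,0,\,-1)v,\,0,\,-1\big)$, and $T(u,\,0,\,-1)v=v-u\lan u,\,v\ran=v$ precisely because $\lan u,\,v\ran=0$. Both $X(v,\,u,\,0)$ and $X(u,\,v,\,0)$ share the last factor $X(u+v,\,0,\,1)$, so the desired identity reduces exactly to the commutativity $X(v,\,0,\,-1)X(u,\,0,\,-1)=X(u,\,0,\,-1)X(v,\,0,\,-1)$ just verified.

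For b) I would conjugate the defining product factor by factor. Three applications of the conjugation formula yield $g\,X(v,\,u,\,0)g\inv=X(\phi(g)v,\,0,\,-1)X(\phi(g)u,\,0,\,-1)X(\phi(g)(u+v),\,0,\,1)$. Since $\phi(g)\in\Sp(2l,\,R)$ is linear and preserves the form, one has $\phi(g)(u+v)=\phi(g)u+\phi(g)v$ and $\lan\phi(g)u,\,\phi(g)v\ran=\lan u,\,v\ran=0$, so the right-hand side is by definition $X(\phi(g)v,\,\phi(g)u,\,0)$. Applying a) then rewrites this as $X(\phi(g)u,\,\phi(g)v,\,0)$, as claimed.

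Finally, for c) I would substitute into the definition to get $X(u,\,ua,\,0)=X(u,\,0,\,-1)X(ua,\,0,\,-1)X(u(a+1),\,0,\,1)$. By Lemma~\ref{x-long-scalar} the middle and last factors equal $X(u,\,0,\,-a^2)$ and $X(u,\,0,\,(a+1)^2)$ respectively, and collecting all three with Lemma~\ref{long-additivity} gives $X(u,\,0,\,-1-a^2+(a+1)^2)=X(u,\,0,\,2a)$. This last computation is entirely mechanical; the only genuine content of the lemma is the commutativity underlying a), which is where the conjugation formula does the real work, so this is the step I would single out as the crux.
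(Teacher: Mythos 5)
Your proposal is correct and takes essentially the same approach as the paper: the paper dismisses a) and b) as obvious (your commutativity argument via the conjugation formula $\,^{g}X(u,\,0,\,a)=X(\phi(g)u,\,0,\,a)$ and the factor-by-factor conjugation are exactly the omitted details), and your computation for c) --- substituting into the definition, then applying Lemma~\ref{x-long-scalar} and Lemma~\ref{long-additivity} to get $X(u,\,0,\,-1-a^2+(a+1)^2)=X(u,\,0,\,2a)$ --- is identical to the paper's proof.
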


\begin{proof}
Since {\it a}) and {\it b}) are obvious, it remains only to check {\it c}). By the very definition we have
$$
X(u,\,ua,\,0)=X(u,\,0,\,-1)X(ua,\,0,\,-1)X(ua+u,\,0,\,1).
$$
Then, using Lemma~\ref{x-long-scalar} and then Lemma~\ref{long-additivity}, we get
\begin{multline*}
X(u,\,0,\,-1)X(ua,\,0,\,-1)X(ua+u,\,0,\,1)=\\
=X(u,\,0,\,-1)X(u,\,0,\,-a^2)X(u,\,0,\,(a+1)^2)=X(u,\,0,\,2a).
\end{multline*}
\end{proof}

\begin{lm}
\label{new}
Consider $u$, $w\in V$ such that 
$$\lan u,\,w\ran=0,\qquad w_i=w_{-i}=w_j=w_{-j}=0.$$
Then
$$
X(u+w,\,0,\,a)=X(u,\,0,\,a)X(w,\,0,\,a)Y(w,\,ua,\,0).
$$
\end{lm}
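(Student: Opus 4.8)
The plan is to reduce the statement to the three-long-root identity of Lemma~\ref{short-is-three-long} by splitting off from $u$ the coordinates living in the two hyperbolic planes where $w$ vanishes. Let $i$, $j$ be the indices with $w_i=w_{-i}=w_j=w_{-j}=0$, and set $v=e_iu_i+e_{-i}u_{-i}$, $v'=e_ju_j+e_{-j}u_{-j}$, $\tilde{\tilde u}=u-v-v'$, so that $\tilde{\tilde u}_i=\tilde{\tilde u}_{-i}=\tilde{\tilde u}_j=\tilde{\tilde u}_{-j}=0$. Since $w$ vanishes in positions $\pm i,\pm j$ and $\langle u,w\rangle=0$, a one-line check gives $\langle v,w\rangle=\langle v',w\rangle=\langle\tilde{\tilde u},w\rangle=0$ together with the mutual orthogonalities of $v$, $v'$, $\tilde{\tilde u}$; in particular every $Y$ below is defined. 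The essential gain is that both $\tilde{\tilde u}$ and $w$ now carry two pairs of zero coordinates (namely $\pm i$ and $\pm j$), which is exactly the regime in which Lemma~\ref{short-is-three-long} and the symmetry and additivity lemmas apply without restriction.

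Next I would expand both sides into products of $Y$'s. Applying the double decomposition (the Corollary following Lemma~\ref{short-is-three-long}) to $X(u+w,0,a)$, whose $\pm i$- and $\pm j$-parts are again $v$ and $v'$ and whose remainder is $\tilde{\tilde u}+w$, yields a product whose leading factor is $Y(\tilde{\tilde u}+w,0,a)$. As $\tilde{\tilde u}$ and $w$ both vanish in positions $\pm i,\pm j$, Lemma~\ref{short-is-three-long} (with $k=j$, which the statement allows) splits this factor as $Y(\tilde{\tilde u},0,a)\,Y(w,0,a)\,Y(w,\tilde{\tilde u}a,0)$. On the right I would expand $X(u,0,a)$ by the same double decomposition, use $X(w,0,a)=Y(w,0,a)$ (valid since $w$ has two pairs of zeros), and write $Y(w,ua,0)=Y(w,(\tilde{\tilde u}+v+v')a,0)$.

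After cancelling the common leading factor $Y(\tilde{\tilde u},0,a)$, what remains is an identity among $Y$-terms all of whose base vectors, namely $v$, $v'$, $\tilde{\tilde u}$, $w$, have two pairs of zeros. I would settle it with three tools: the Levi commutation relations of Lemma~\ref{w-conjugation}, which let factors with \emph{separated} bases pass through one another; the middle-slot additivity $Y(p,x,0)\,Y(p,y,0)=Y(p,x+y,0)$ for $\langle x,y\rangle=0$ of Lemma~\ref{lm0}, to recombine the $\tilde{\tilde u}a$, $va$, $v'a$, $wa$ pieces; and the symmetry $Y(p,qa,0)=Y(q,pa,0)$ of Lemma~\ref{short-symmetry}, to match short-root factors with interchanged arguments. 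I expect this last step to be the main obstacle: one must transport the $w$-based factors past the $v$- and $v'$-based ones in just the right order so that they coalesce into the single factor $Y(w,ua,0)$ while the others reconstitute the tail of $X(u,0,a)$, each transposition being justified by one of the three lemmas. Splitting off only the $\pm i$-part of $u$ would instead leave a leading factor based at a vector with a single pair of zeros, forcing a reproof of Lemma~\ref{short-is-three-long} in that generality via the conjugation formula of \S5; peeling both planes sidesteps this.
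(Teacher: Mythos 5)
Your plan is, in its structure, the paper's own proof: the paper likewise splits $u$ into $v+v'+\tilde{\tilde u}$, expands $X(u+w,\,0,\,a)$ by the double decomposition (peeling off $v$ via the definition of $X_{(i)}$ and then $v'$ via Lemma~\ref{correctness}, which together are exactly the Corollary you invoke), splits the leading factor $Y(\tilde{\tilde u}+w,\,0,\,a)$ by Lemma~\ref{short-is-three-long} with coincident indices, and then reorders and recombines the remaining $Y$-factors so that the first six reconstitute $X(u,\,0,\,a)$ and the $w$-based ones coalesce into $X(w,\,0,\,a)\,Y(w,\,ua,\,0)$.

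There is, however, one concrete flaw in the toolkit you name for the final step. Lemma~\ref{lm0} is \emph{not} the unrestricted additivity $Y(p,\,x,\,0)Y(p,\,y,\,0)=Y(p,\,x+y,\,0)$ for orthogonal $x$, $y$: it additionally requires the two slots $x$, $y$ to vanish on a common coordinate pair on which the base $p$ also vanishes. This hypothesis holds when you merge $\tilde{\tilde u}a$ with $va$ (common pair $\pm j$) or with $v'a$ (common pair $\pm i$), but it fails at the last merge needed to coalesce all three pieces into $Y(w,\,ua,\,0)$: the sum $(\tilde{\tilde u}+v)a$ is only guaranteed to vanish at $\pm j$, while $v'a$ is only guaranteed to vanish at $\pm i$, so no admissible common pair exists, and taking the pieces in the other order fails symmetrically. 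The paper handles precisely this point with Lemma~\ref{w=zz} instead: splitting the slot of $Y(w,\,ua,\,0)$ off coordinate pairs, in both possible orders, both performs the recombination and shows that $Y(w,\,va,\,0)$ and $Y(w,\,v'a,\,0)$ commute; together with Lemma~\ref{short-symmetry} this yields $Y(w,\,\tilde{\tilde u}a,\,0)Y(w,\,va,\,0)Y(w,\,v'a,\,0)=Y(w,\,ua,\,0)$. The same device is what licenses several of the transpositions you defer to ``the right order.'' So: with Lemma~\ref{w=zz} added to your three tools, your argument closes exactly as in the paper; with Lemma~\ref{lm0} as you state it, the decisive recombination cannot be justified.
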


\begin{proof}
Set $v=e_iu_i+e_{-i}u_{-i}$, $\tilde u=u-v$ and observe that $\lan w,\,v\ran=\lan w,\,\tilde u\ran=0$. By the very definition one has
$$
X(u+w,\,0,\,a)=Y_{(i)}(\tilde u+w,\,0,\,a)Y(v,\,0,\,a)Y(v,\,(\tilde u+w)a,\,0).
$$
Further, set $v'=e_ju_j+e_{-j}u_{-j}$ and $\tilde{\tilde u}=\tilde u-v'$. Since $X(\hat u,\,0,\,\hat a)$ is well-defined by Lemma~\ref{correctness}, one has
\begin{multline*}
Y_{(i)}(\tilde u+w,\,0,\,a)=X(\tilde u+w,\,0,\,a)=\\
=Y_{(j)}(\tilde{\tilde u}+w,\,0,\,a)Y(v',\,0,\,a)Y(v',\,(\tilde{\tilde u}+w)a,\,0).
\end{multline*}
Moreover,
$$
Y(\tilde{\tilde u}+w,\,0,\,a)=Y(\tilde{\tilde u},\,0,\,a)Y(w,\,0,\,a)Y(w,\,\tilde{\tilde u}a,\,0)
$$
by Lemma~\ref{short-is-three-long} and 
$$
Y(v',\,(\tilde{\tilde u}+w)a,\,0)=Y(v',\,\tilde{\tilde u}a,\,0)Y(v',\,wa,\,0)
$$
by Lemma~\ref{lm0}, whereas Lemma~\ref{w=zz} implies that
$$
Y(v,\,(\tilde u+w)a,\,0)=Y(v,\,(\tilde{\tilde u}+w)a,\,0)Y(v,\,v'a,\,0).
$$
Comparing these formulae, one gets
\begin{multline*}
X(u+w,\,0,\,a)=\\
=Y(\tilde{\tilde u},\,0,\,a)Y(w,\,0,\,a)Y(w,\,\tilde{\tilde u}a,\,0)Y(v',\,0,\,a)Y(v',\,\tilde{\tilde u}a,\,0)\cdot\\
\cdot Y(v',\,wa,\,0)Y(v,\,0,\,a)Y(v,\,\tilde{\tilde u}a,\,0)Y(v,\,wa,\,0)Y(v,\,v'a,\,0).
\end{multline*}
Invoking the usual arguments, it is easy show that the factors in the above product can be reordered as follows  
\begin{multline*}
X(u+w,\,0,\,a)=\\
=Y(\tilde{\tilde u},\,0,\,a)Y(v',\,0,\,a)Y(v',\,\tilde{\tilde u}a,\,0)Y(v,\,0,\,a)Y(v,\,\tilde{\tilde u}a,\,0)\cdot\\
\cdot Y(v,\,v'a,\,0)Y(w,\,0,\,a)Y(w,\,\tilde{\tilde u}a,\,0)Y(v,\,wa,\,0)Y(v',\,wa,\,0).
\end{multline*}
Finally, recall that 
\begin{multline*}
Y(\tilde{\tilde u},\,0,\,a)Y(v',\,0,\,a)Y(v',\,\tilde{\tilde u}a,\,0)Y(v,\,0,\,a)Y(v,\,\tilde{\tilde u}a,\,0)Y(v,\,v'a,\,0)=\\
=X(u,\,0,\,a)
\end{multline*}
and that
\begin{multline*}
Y(w,\,\tilde{\tilde u}a,\,0)Y(v,\,wa,\,0)Y(v',\,wa,\,0)=\\
=Y(w,\,\tilde{\tilde u}a,\,0)Y(w,\,va,\,0)Y(w,\,v'a,\,0)=Y(w,\,ua,\,0)
\end{multline*}
by Lemmas~\ref{short-symmetry} and \ref{w=zz}.
\end{proof}

The next lemma is an easy corollary of the previous one.

\begin{lm}
\label{x=y}
For $v\in V$ such that $v_{-i}=0$, one has
$$
X(e_i,\,v,\,0)=Y(e_i,\,v,\,0).
$$
\end{lm}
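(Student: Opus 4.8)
The plan is to read the identity off from Lemma~\ref{new} together with the definition of the short-root generator, and then to bootstrap from the case where $v$ has many zero coordinates to the general case by additivity. The underlying principle is that both $X(e_i,\,v,\,0)$ and $Y(e_i,\,v,\,0)$ have image $T(e_i,\,v,\,0)$ under $\phi$, so once both sides are known to lie in $\ur i$ the equality is forced by injectivity of $\phi$ on $\ur i$ (Lemma~\ref{unirad}).

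First I would record the key consequence of Lemma~\ref{new}. Taking $a=1$ there and using $X(u,\,0,\,1)\inv=X(u,\,0,\,-1)$ (Lemma~\ref{long-additivity}), one gets, for any $w$ having two pairs of zero coordinates and any $u$ with $\lan u,\,w\ran=0$,
$$
Y(w,\,u,\,0)=X(w,\,0,\,-1)X(u,\,0,\,-1)X(u+w,\,0,\,1).
$$
The right-hand side is exactly the defining expression for the short-root generator $X(w,\,u,\,0)$, so $Y(w,\,u,\,0)=X(w,\,u,\,0)$ for such $w$ and $u$.

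Next I would treat the case where $v$ itself has at least two pairs of zero coordinates. Since $l\geq3$, the vector $e_i$ has two pairs of zeros, and $\lan e_i,\,v\ran=\eps{-i}v_{-i}=0$, so the displayed identity with $w=v$, $u=e_i$ gives $X(v,\,e_i,\,0)=Y(v,\,e_i,\,0)$. Combining this with the symmetry $X(e_i,\,v,\,0)=X(v,\,e_i,\,0)$ (Lemma~\ref{short-obvious}, a)) and $Y(v,\,e_i,\,0)=Y(e_i,\,v,\,0)$ (Lemma~\ref{short-symmetry}, valid since both $v$ and $e_i$ have two pairs of zeros), I obtain $X(e_i,\,v,\,0)=Y(e_i,\,v,\,0)$ in this case.

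Finally, for a general $v$ with $v_{-i}=0$ I would reduce to the previous case. Writing $v=\sum_m p_m$ as the sum of its components $p_m=e_mv_m+e_{-m}v_{-m}$ on the individual index pairs, each $p_m$ is supported on a single pair and hence (as $l\geq3$) has two pairs of zeros, while the $p_m$ are mutually orthogonal and orthogonal to $e_i$. On the $Y$-side the assembly is clean: by Lemma~\ref{y-add} all the correction terms $\lan\,\cdot\,,\,\cdot\,\ran$ vanish by orthogonality, so $\prod_m Y(e_i,\,p_m,\,0)=Y(e_i,\,v,\,0)$, and by the case already settled each factor equals $X(e_i,\,p_m,\,0)$. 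It then remains to match the left-hand side, i.e. to prove $X(e_i,\,v,\,0)=\prod_m X(e_i,\,p_m,\,0)$ for the short-root generator. I expect this last step to be the main obstacle, because the clean additivity P1 is not yet available (it is proved later, using the present lemma). I would instead obtain the decomposition by peeling the pairs $p_m$ off the long-root factors $X(v,\,0,\,-1)$ and $X(v+e_i,\,0,\,1)$ in the definition of $X(v,\,e_i,\,0)$ one at a time with Lemma~\ref{new}, checking that the auxiliary $Y$-factors cancel; equivalently, this exhibits $X(e_i,\,v,\,0)$ as an element of $\ur i$, whence the desired equality follows from Lemma~\ref{unirad} since its image under $\phi$ is $T(e_i,\,v,\,0)=\phi\big(Y(e_i,\,v,\,0)\big)$. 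The book-keeping with the symplectic correction terms in this peeling is the only non-formal part of the argument.
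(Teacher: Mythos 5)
Your opening step is already a complete proof of the lemma, and it is precisely the paper's proof. Lemma~\ref{new} puts the two-pairs-of-zeros hypothesis on $w$ only; the other vector $u$ is arbitrary subject to $\lan u,\,w\ran=0$. Hence in your displayed identity
$$
Y(w,\,u,\,0)=X(w,\,0,\,-1)X(u,\,0,\,-1)X(u+w,\,0,\,1)=X(w,\,u,\,0)
$$
you are free to take $w=e_i$ (which has $l-1\geq2$ zero pairs, as $l\geq3$) and $u=v$ with $v$ arbitrary, the condition $v_{-i}=0$ guaranteeing $\lan e_i,\,v\ran=0$. This gives $Y(e_i,\,v,\,0)=X(e_i,\,v,\,0)$ for every $v$ in the statement; the paper's proof is exactly this instantiation, namely $u=v$, $w=e_i$, $a=1$ in Lemma~\ref{new}.

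Everything after your first step is a detour created by misreading which argument of Lemma~\ref{new} carries the zero-coordinate hypothesis. In your second step you substitute $w=v$, $u=e_i$, which forces the restriction that $v$ have two zero pairs and the subsequent symmetry juggling via Lemmas~\ref{short-obvious} and \ref{short-symmetry}; that step is correct but redundant. Your third step --- reducing a general $v$ by peeling off its coordinate pairs --- is left as an unfinished sketch, and your worry about it is justified: short-root additivity (relation P1, Lemma~\ref{short-additivity}) is proved later and depends on the present lemma, so all the book-keeping with the symplectic correction terms would have to be done by hand, and this is exactly the kind of computation the paper's arrangement of lemmas is designed to avoid. Fortunately none of it is needed: delete steps two and three, and promote step one, instantiated at $w=e_i$, $u=v$, to the whole proof.
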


\begin{proof}
In the statement of Lemma~\ref{new} take $u=v$, $w=e_i$, $a=1$.
\end{proof}

\begin{lm}
\label{x-long-is-three-short}
Consider $a\in R$ and $u$, $v\in V$ such that $\lan u,\,v\ran=0$ and assume also that $v$ is a column of a symplectic elementary matrix. Then
$$
X(u+v,\,0,\,a)=X(u,\,0,\,a)X(v,\,0,\,a)X(v,\,ua,\,0).
$$
\end{lm}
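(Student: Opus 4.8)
The plan is to reduce the statement to the special case where $v$ is a base vector, which is covered by Lemma~\ref{new}, and then to transport the resulting identity along a conjugation carrying a base vector onto $v$.

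First I would settle the base-vector case. Fix an index $i$ and a vector $u'\in V$ with $u'_{-i}=0$, equivalently $\langle u',\,e_i\rangle=0$. Since $l\geq3$, the base vector $e_i$ has at least two pairs of zero coordinates, so Lemma~\ref{new} applies with $u\mapsto u'$ and $w\mapsto e_i$ and yields
$$X(u'+e_i,\,0,\,a)=X(u',\,0,\,a)X(e_i,\,0,\,a)Y(e_i,\,u'a,\,0).$$
As $(u'a)_{-i}=0$, Lemma~\ref{x=y} rewrites the last factor as $X(e_i,\,u'a,\,0)$, so that
$$X(u'+e_i,\,0,\,a)=X(u',\,0,\,a)X(e_i,\,0,\,a)X(e_i,\,u'a,\,0).$$

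Next I would transport this identity to an arbitrary column $v$ of a symplectic elementary matrix. Since $\phi$ is surjective, such a $v$ has the form $v=\phi(g)e_i$ for some $g\in\St\!\Sp(2l,\,R)$ and some base index $i$. Given $u$ with $\langle u,\,v\rangle=0$, set $u'=\phi(g^{-1})u$; then $\langle u',\,e_i\rangle=\langle u,\,v\rangle=0$, hence $u'_{-i}=0$ and the base-vector identity holds for this $u'$. Conjugating it by $g$, the conjugation formula for long-root generators (Lemmas~\ref{conjugation-by-long} and~\ref{conjugation-by-short}) turns the left-hand side into $X(\phi(g)u'+\phi(g)e_i,\,0,\,a)=X(u+v,\,0,\,a)$ and the first two right-hand factors into $X(u,\,0,\,a)X(v,\,0,\,a)$, while Lemma~\ref{short-obvious} sends the remaining factor $X(e_i,\,u'a,\,0)$ to $X(\phi(g)(u'a),\,\phi(g)e_i,\,0)=X(ua,\,v,\,0)=X(v,\,ua,\,0)$, using part (b) followed by the symmetry of part (a). Collecting these equalities gives exactly $X(u+v,\,0,\,a)=X(u,\,0,\,a)X(v,\,0,\,a)X(v,\,ua,\,0)$.

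There is no serious obstacle: the conjugation formulae are already available for an arbitrary element of $\St\!\Sp(2l,\,R)$, so the whole content of the lemma is the passage from the special shape $w=e_i$ in Lemma~\ref{new} to a general elementary column by conjugation. The only points demanding attention are the bookkeeping of the short-root factor under conjugation, dispatched by parts (a) and (b) of Lemma~\ref{short-obvious}, and the observation that the hypothesis $\langle u,\,v\rangle=0$ descends to $u'_{-i}=0$, which is precisely what both Lemma~\ref{new} and Lemma~\ref{x=y} require.
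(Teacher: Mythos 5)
Your proof is correct and takes essentially the same approach as the paper: the paper likewise reduces to the base-vector case via Lemma~\ref{new} (with $w=e_i$) and Lemma~\ref{x=y}, and transports it by the conjugation formulas, the only difference being that it picks $g$ with $\phi(g)v=e_i$ and conjugates the general identity \emph{down} to the base case, whereas you conjugate the base-case identity \emph{up}; the two are inverse formulations of the same argument.
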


\begin{proof}
Take $g\in\St\!\Sp(2l,\,R)$ such that $\phi(g)v=e_i$. Then,
$$
g\,X(u+v,\,0,\,a)g\inv=X(\phi(g)u+e_i,\,0,\,a).
$$
Now, Lemma~\ref{new} (and Lemma~\ref{x=y}) imply that
\begin{multline*}
X(\phi(g)u+e_i,\,0,\,a)=X(\phi(g)u,\,0,\,a)X(e_i,\,0,\,a)X(e_i,\,\phi(g)ua,\,0)=\\
=g\,X(u,\,0,\,a)X(v,\,0,\,a)X(v,\,ua,\,0)g\inv.
\end{multline*}
\end{proof}

The next lemma is an obvious consequence of the previous one.

\begin{lm}
Assume that both $u$ and $v$ are columns of symplectic elementary matrices such that $\lan u,\,v\ran=0$. Then for any $a\in R$ one has
$$
X(u,\,va,\,0)=X(v,\,ua,\,0).
$$
\end{lm}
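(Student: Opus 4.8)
The plan is to deduce this identity from the three–long–root decomposition of Lemma~\ref{x-long-is-three-short}, in complete analogy with the way Lemma~\ref{short-symmetry} was obtained from Lemma~\ref{short-is-three-long}. The idea is that the previous lemma already rewrites a short-root generator as a product of three long-root generators, so I would write each of $X(v,\,ua,\,0)$ and $X(u,\,va,\,0)$ in this form and compare the results.

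First I would apply Lemma~\ref{x-long-is-three-short} to the pair $(u,\,v)$, which is admissible because $v$ is a column of a symplectic elementary matrix. Solving the resulting identity for the short-root factor and inverting the two long-root factors by Lemma~\ref{long-additivity} yields
$$
X(v,\,ua,\,0)=X(v,\,0,\,-a)X(u,\,0,\,-a)X(u+v,\,0,\,a).
$$
Since $u$ is also a column of a symplectic elementary matrix, I would apply the same lemma to the pair $(v,\,u)$ and, in the same way, obtain
$$
X(u,\,va,\,0)=X(u,\,0,\,-a)X(v,\,0,\,-a)X(u+v,\,0,\,a).
$$
The two right-hand sides differ only by the order of the factors $X(u,\,0,\,-a)$ and $X(v,\,0,\,-a)$, so it remains to check that these two long-root generators commute.

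This commutation is the only substantial step, and it is immediate from the conjugation formula (the corollary to Lemmas~\ref{conjugation-by-long} and~\ref{conjugation-by-short}). Conjugating $X(v,\,0,\,-a)$ by $X(u,\,0,\,-a)$ replaces $v$ by
$$
\phi\big(X(u,\,0,\,-a)\big)v=T(u,\,0,\,-a)v=v-u\big(a\lan u,\,v\ran\big)=v,
$$
where the last equality uses $\lan u,\,v\ran=0$; hence $\!\,^{X(u,\,0,\,-a)}X(v,\,0,\,-a)=X(v,\,0,\,-a)$ and the two factors commute. Comparing the two displays then gives $X(v,\,ua,\,0)=X(u,\,va,\,0)$. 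I do not expect any genuine obstacle: the only thing one must be careful about is that both invocations of Lemma~\ref{x-long-is-three-short} are legitimate, which is guaranteed precisely by the hypothesis that $u$ and $v$ are simultaneously columns of symplectic elementary matrices.
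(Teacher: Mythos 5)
Your proposal is correct and is exactly the argument the paper intends: the paper calls this lemma an ``obvious consequence of the previous one,'' meaning precisely your double application of Lemma~\ref{x-long-is-three-short} (once to $(u,v)$, once to $(v,u)$), mirroring how Lemma~\ref{short-symmetry} was derived from Lemma~\ref{short-is-three-long}. Your verification that $X(u,\,0,\,-a)$ and $X(v,\,0,\,-a)$ commute, via the conjugation corollary and $T(u,\,0,\,-a)v=v$ when $\lan u,\,v\ran=0$, supplies the one step the paper leaves implicit.
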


\begin{lm}
\label{short-additivity}
Let $u\in V$ be column of a symplectic elementary matrix and let $v$, $w\in V$ be arbitrary columns such that $\lan u,\,v\ran=\lan u,\,w\ran=0$. Then
$$
X(u,\,v,\,0)X(u,\,w,\,0)=X(u,\,v+w,\,0)X(u,\,0,\,\lan v,\,w\ran).
$$
\end{lm}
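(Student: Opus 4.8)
The plan is to reduce to the special case $u=e_i$ by conjugation, and there to invoke the additivity of the unipotent generators $Y(e_i,\,\cdot\,,\,\cdot\,)$ established in Lemma~\ref{y-add}. Note that the asserted identity is exactly relation P1 in the case $a=b=0$, with the central scalar $\lan v,\,w\ran$ split off as a long-root factor.

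Since $u$ is a column of a symplectic elementary matrix, I can choose $g\in\St\!\Sp(2l,\,R)$ with $\phi(g)u=e_i$ for some index $i$, exactly as in the proof of Lemma~\ref{x-long-is-three-short}. As conjugation by $g$ is an automorphism of $\St\!\Sp(2l,\,R)$, it suffices to prove the identity obtained by conjugating both sides by $g$. Write $v'=\phi(g)v$ and $w'=\phi(g)w$. Applying the conjugation formula for short-root generators (part b) of Lemma~\ref{short-obvious}) and for long-root generators (the Corollary to Lemma~\ref{conjugation-by-short}), and using that $\phi(g)\in\Sp(V)$ preserves the form so that $\lan v,\,w\ran=\lan v',\,w'\ran$, the left-hand side becomes $X(e_i,\,v',\,0)X(e_i,\,w',\,0)$ and the right-hand side becomes $X(e_i,\,v'+w',\,0)X(e_i,\,0,\,\lan v',\,w'\ran)$. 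Moreover $\lan e_i,\,v'\ran=\lan\phi(g)u,\,\phi(g)v\ran=\lan u,\,v\ran=0$ forces $v'_{-i}=0$, and similarly $w'_{-i}=0$, whence also $(v'+w')_{-i}=0$.

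In this reduced situation each short-root generator $X(e_i,\,\cdot\,,\,0)$ equals the corresponding unipotent $Y(e_i,\,\cdot\,,\,0)$ by Lemma~\ref{x=y}, while the long-root factor satisfies $X(e_i,\,0,\,\lan v',\,w'\ran)=Y(e_i,\,0,\,\lan v',\,w'\ran)=X_{i,-i}(\lan v',\,w'\ran)$, as computed in the proof of Lemma~\ref{generating}. The required identity thus becomes $Y(e_i,\,v',\,0)Y(e_i,\,w',\,0)=Y(e_i,\,v'+w',\,0)Y(e_i,\,0,\,\lan v',\,w'\ran)$. One application of Lemma~\ref{y-add} rewrites the left-hand side as $Y(e_i,\,v'+w',\,\lan v',\,w'\ran)$, and a second application, with second vector $0$ and using $\lan v'+w',\,0\ran=0$, rewrites the right-hand side as the same element. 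This settles the reduced identity and hence the lemma.

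The computation is short; the only point demanding care is the bookkeeping in the conjugation step, namely verifying that the orthogonality and zero-coordinate conditions transport correctly and that the symplectic form is preserved, so that both Lemma~\ref{x=y} and Lemma~\ref{y-add} genuinely apply. I do not expect any serious obstacle, since all of the structural work has already been carried out in the preceding sections.
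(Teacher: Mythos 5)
Your proof is correct and follows essentially the same route as the paper: the paper's proof is literally ``use the same trick as in Lemma~\ref{x-long-is-three-short}'', i.e.\ pick $g$ with $\phi(g)u=e_i$, conjugate both sides using Lemma~\ref{short-obvious}~b) and the corollary to Lemmas~\ref{conjugation-by-long} and \ref{conjugation-by-short}, and finish in the unipotent radical. Your explicit completion of the reduced case via Lemma~\ref{x=y} and two applications of Lemma~\ref{y-add} is exactly the intended (but unstated) endgame, and all the bookkeeping (form preservation, $v'_{-i}=w'_{-i}=0$) checks out.
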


\begin{proof}
Use the same trick as in Lemma~\ref{x-long-is-three-short}.
\end{proof}

\begin{df}
For $u$, $v\in V$ such that $\lan u,\,v\ran=0$, $a\in R$, set
$$
X(u,\,v,\,a)=X(u,\,v,\,0)X(u,\,0,\,a).
$$
\end{df}

\begin{lm}
\label{p-relations}
Assume that $u$, $u'$ are columns of symplectic elementary matrices, and let $v$, $v'$, $w\in V$ be arbitrary columns such that $\lan u,\,v\ran=\lan u,\,w\ran=0$, $\lan u',\,v'\ran=0$. Then for any $a$, $b\in R$ one has
\begin{align*}
&\text{a{\rm)} }X(u,\,v,\,a)X(u,\,w,\,b)=X(u,\,v+w,\,a+b+\lan v,\,w\ran),\\
&\begin{aligned}\text{b{\rm )} }&\text{If $v$ is also a column of a symplectic elementary matrix, then}\\&X(u,\,va,\,0)=X(v,\,ua,\,0),\end{aligned}\\
&\begin{aligned}\text{c{\rm)} }X(u',\,v',\,b)X(u,\,v,\,a)X(u',\,v',\,b)\inv&=\\&=X(T(u',\,v',\,b)u,\,T(u',\,v',\,b)v,\,a).\end{aligned}
\end{align*}
\end{lm}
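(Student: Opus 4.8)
The plan is to prove the three identities separately, reducing each to the additivity and conjugation properties of the long-root generators $X(u,\,0,\,a)$ and the short-root generators $X(u,\,v,\,0)$ that are already established, all channelled through the definition $X(u,\,v,\,a)=X(u,\,v,\,0)X(u,\,0,\,a)$. As a preliminary I would record that $\phi$ sends $X(u,\,v,\,a)$ to $T(u,\,v,\,a)$: applying $\phi$ to the defining product of $X(u,\,0,\,a)$ and using the $T$-level identity from Lemma~\ref{esd-properties} gives $\phi(X(u,\,0,\,a))=T(u,\,0,\,a)$, applying $\phi$ to $X(u,\,v,\,0)=X(v,\,0,\,-1)X(u,\,0,\,-1)X(u+v,\,0,\,1)$ and simplifying by Lemma~\ref{esd-properties} gives $\phi(X(u,\,v,\,0))=T(u,\,v,\,0)$, and the two multiply to $T(u,\,v,\,a)$ by part b) of that lemma. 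With this in hand, identity b) requires no further work: it is precisely the (unlabelled) lemma immediately preceding Lemma~\ref{short-additivity}, applied to the columns $u$ and $v$.

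For a) I would first expand both factors by definition, obtaining $X(u,\,v,\,a)X(u,\,w,\,b)=X(u,\,v,\,0)X(u,\,0,\,a)X(u,\,w,\,0)X(u,\,0,\,b)$. The one essential local step is that $X(u,\,0,\,a)$ commutes with $X(u,\,w,\,0)$: conjugating the latter by the former and invoking part b) of Lemma~\ref{short-obvious} rewrites it as $X(T(u,\,0,\,a)u,\,T(u,\,0,\,a)w,\,0)$, and since $\lan u,\,w\ran=0$ the transformation $T(u,\,0,\,a)$ fixes both $u$ and $w$, leaving the factor unchanged. After this commutation I would collapse the two long-root factors by Lemma~\ref{long-additivity} into $X(u,\,0,\,a+b)$, fuse the two short-root factors by Lemma~\ref{short-additivity} into $X(u,\,v+w,\,0)X(u,\,0,\,\lan v,\,w\ran)$, merge the long-root scalars once more by Lemma~\ref{long-additivity}, and recognise the outcome as $X(u,\,v+w,\,a+b+\lan v,\,w\ran)$ by definition.

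For c) the point is that conjugation acts factor by factor. Setting $g=X(u',\,v',\,b)$, so that $\phi(g)=T(u',\,v',\,b)=:T$ by the preliminary, I would conjugate the two pieces of $X(u,\,v,\,a)=X(u,\,v,\,0)X(u,\,0,\,a)$ separately: the short-root factor becomes $X(Tu,\,Tv,\,0)$ by part b) of Lemma~\ref{short-obvious}, and the long-root factor becomes $X(Tu,\,0,\,a)$ by the corollary to Lemmas~\ref{conjugation-by-long} and \ref{conjugation-by-short}. Their product is $X(Tu,\,Tv,\,a)$ by definition, which is exactly the assertion; the right-hand side is legitimate because $T\in\Sp(V)$ preserves the form, so $\lan Tu,\,Tv\ran=\lan u,\,v\ran=0$.

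The genuine difficulty has already been spent in the conjugation formulae and the additivity lemmas, so the work here is mostly bookkeeping. The one place that deserves care is the commutation $[X(u,\,0,\,a),\,X(u,\,w,\,0)]=1$ used in a): this is not a formal identity but rests on the conjugation formula together with the fact that $T(u,\,0,\,a)$ fixes every vector orthogonal to $u$. I would also keep checking throughout that the orthogonality relations and the ``column'' hypotheses propagate correctly, so that every generator appearing in the manipulations is in fact defined.
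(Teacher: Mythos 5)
Your proof is correct and takes essentially the same route as the paper: part b) is the unlabelled lemma already proved, part c) is factor-by-factor conjugation via Lemma~\ref{short-obvious} and the corollary to Lemmas~\ref{conjugation-by-long} and \ref{conjugation-by-short}, and part a) combines Lemmas~\ref{short-additivity} and \ref{long-additivity} exactly as the paper indicates. The commutation $[X(u,\,0,\,a),\,X(u,\,w,\,0)]=1$, which you verify via the conjugation formula and the fact that $T(u,\,0,\,a)$ fixes $u$ and $w$, is a detail the paper leaves implicit, and you supply it correctly.
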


\begin{proof}
For {\it a}) use Lemmas~\ref{short-additivity} and \ref{long-additivity}, {\it b}) and {\it c}) were already checked.
\end{proof}

\section{Symplectic van der Kallen group}

\begin{df}
Let the symplectic van der Kallen group $\St\!\Sp^*\!(2l,\,R)$ be the group defined by the set of generators 
\begin{multline*}
\big\{X^*(u,\,v,\,a)\,\big| u,\,v\in V,\ \text{$u$ is a column of}\\ \text{a symplectic elementary matrix},\ \lan u,\,v\ran=0,\ a\in R\big\}
\end{multline*}
and relations
\setcounter{equation}{0}
\renewcommand{\theequation}{P\arabic{equation}}
\begin{align}
&X^*(u,\,v,\,a)X^*(u,\,w,\,b)=X^*(u,\,v+w,\,a+b+\lan v,\,w\ran),\\
&\begin{aligned}X^*(u,\,va,\,0)=X^*(v,\,ua,\,0)\,\text{ whe}&\text{re $v$ is also a column}\\ &\text{of a symplectic elementary matrix,}\end{aligned}\\
&\begin{aligned}X^*(u',\,v',\,b)X^*(u,\,v,\,a)X^*(u',\,v&',\,b)\inv=\\&\!\!=X^*(T(u',\,v',\,b)u,\,T(u',\,v',\,b)v,\,a),\end{aligned}
\end{align}
\end{df}

\begin{rk}
Clearly, Lemma~\ref{p-relations} amounts to the existence of a homomorphism 
$$\varpi\colon\St\!\Sp^*\!(2l,\,R)\epi\St\!\Sp(2l,\,R),$$
sending $X^*(u,\,v,\,a)$ to $X(u,\,v,\,a)$, which is obviously surjective. Furthermore, P3 implies that $\varpi$ is in fact a central extension.
\end{rk}

We have to construct the inverse isomorphism from the Steinberg group to the van der Kallen group. Let us start with the following lemma.

\begin{lm}
\label{r3-r5}
For $u$, $v\in V$, where $u$ is column of a symplectic elementary matrix, such that $\lan u,\,v\ran=0$, $u_i=u_{-i}=v_i=v_{-i}=0$, and for any $a$, $b\in R$ one has
$$
[X^*(e_i,\,ub,\,0),\,X^*(e_{-i},\,v\eps i,\,a)]=X^*(u,\,vb,\,ab^2)X^*(e_{-i},\,uab\eps i,\,0).
$$
\end{lm}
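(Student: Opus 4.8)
The plan is to reproduce, inside the abstract group $\St\!\Sp^*(2l,\,R)$, the computation of Lemma~\ref{z-decomposition}, using the defining relations P1--P3 in place of the properties b), c), d) of Lemma~\ref{esd-properties}. The first preliminary step is to note that $X^*(e_i,\,ub,\,0)\inv=X^*(e_i,\,-ub,\,0)$: this follows from P1 together with $\lan ub,\,ub\ran=0$ and the fact that $X^*(w,\,0,\,0)=1$, which is itself the instance of P1 with all entries zero. This identity lets me rewrite the commutator as
$$[X^*(e_i,\,ub,\,0),\,X^*(e_{-i},\,v\eps i,\,a)]=X^*(e_i,\,ub,\,0)\cdot\,^{X^*(e_{-i},\,v\eps i,\,a)}X^*(e_i,\,-ub,\,0).$$

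Next I would evaluate the conjugate on the right by applying the conjugation relation P3. This requires knowing how the ESD-transformation $T(e_{-i},\,v\eps i,\,a)$ acts on the vectors $e_i$ and $u$. Using $v_i=v_{-i}=0$ together with $\lan u,\,v\ran=0$ and $u_i=u_{-i}=0$, a direct substitution into the ESD-formula gives $T(e_{-i},\,v\eps i,\,a)e_i=e_i-e_{-i}a\eps i-v$ (after simplifying via $\eps{-i}=-\eps i$) and $T(e_{-i},\,v\eps i,\,a)u=u$. Hence P3 turns the conjugate into $X^*(e_i-e_{-i}a\eps i-v,\,-ub,\,0)$. The first argument here is the image of the column $e_i$ under a symplectic transvection, so it is again a column of a symplectic elementary matrix and the generator is legitimate.

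The core of the argument is then to bring both factors to a common first argument via the switch relation P2 and to combine them using the additivity relation P1. Since $u$ and $e_i-e_{-i}a\eps i-v$ are both columns of symplectic elementary matrices, two applications of P2 give $X^*(e_i,\,ub,\,0)=X^*(u,\,e_ib,\,0)$ and $X^*(e_i-e_{-i}a\eps i-v,\,-ub,\,0)=X^*(u,\,(-e_i+e_{-i}a\eps i+v)b,\,0)$. Now both factors are long-root generators based at $u$, so P1 applies: the second arguments add to $e_{-i}ab\eps i+vb$, and the symplectic correction $\lan e_ib,\,(-e_i+e_{-i}a\eps i+v)b\ran$ contributes exactly $ab^2$ to the scalar, yielding $X^*(u,\,e_{-i}ab\eps i+vb,\,ab^2)$. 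A further use of P1 splits this as $X^*(u,\,vb,\,ab^2)X^*(u,\,e_{-i}ab\eps i,\,0)$ — the cross term vanishes because $v_i=0$ — and a final application of P2 rewrites $X^*(u,\,e_{-i}ab\eps i,\,0)=X^*(e_{-i},\,uab\eps i,\,0)$, which is the desired identity.

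The calculation itself is routine bookkeeping; the two points that genuinely need care are the sign tracking through $\eps{-i}=-\eps i$, and the verification that every intermediate symbol is a legitimate generator, i.e. that its first argument is a column of a symplectic elementary matrix. This is automatic for $e_i$, $e_{-i}$ and $u$, and for $T(e_{-i},\,v\eps i,\,a)e_i$ it holds because symplectic transvections carry columns of elementary matrices to columns of elementary matrices. That structural fact is precisely what licenses the use of P3 in the second step and is the only non-computational ingredient of the proof.
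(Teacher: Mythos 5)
Your proof is correct and takes essentially the same route as the paper's: both reduce the commutator via the P1-inverse rule, then use P2, P3, and P1-additivity; the only difference is that the paper switches via P2 \emph{first} so that the P3-conjugation fixes the first argument $u$, whereas you conjugate first (sending $e_i$ to $T(e_{-i},\,v\eps{i},\,a)e_i=e_i-e_{-i}a\eps{i}-v$) and then switch, after which the two computations coincide verbatim. The extra legitimacy check you flag --- that $T(e_{-i},\,v\eps{i},\,a)e_i$ is again a column of a symplectic elementary matrix --- is precisely the price of this reordering, and you handle it correctly.
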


\begin{proof}
Observe that $X^*(\hat u,\,\hat v,\,0)\inv=X^*(\hat u,\,-\hat v,\,0)$ by P1, so that 
\begin{multline*}
[X^*(e_i,\,ub,\,0),\,X^*(e_{-i},\,v\eps i,\,a)]=\\=X^*(u,\,e_ib,\,0)\cdot\!\,^{X^*(e_{-i},\,v\eps i,\,a)}X^*(u,\,-e_ib,\,0)
\end{multline*}
by P2. Then, 
\begin{multline*}
X^*(u,\,e_ib,\,0)\cdot\!\,^{X^*(e_{-i},\,v\eps i,\,a)}X^*(u,\,-e_ib,\,0)=\\=X^*(u,\,e_ib,\,0)X^*(u,\,-e_ib+e_{-i}ab\eps{i}+vb,\,0)
\end{multline*}
by P3 and finally
\begin{multline*}
X^*(u,\,e_ib,\,0)X^*(u,\,-e_ib+e_{-i}ab\eps{i}+vb,\,0)=\\=X^*(u,\,e_{-i}ab\eps i+vb,\,ab^2)=X^*(u,\,vb,\,ab^2)X^*(u,\,e_{-i}ab\eps i,\,0)
\end{multline*}
by P1. Now, the claim is obvious by P2.
\end{proof}

\begin{df}
Set
\begin{align*}
&X^*_{ij}(a)=X^*(e_i,\,e_{-j}a\eps{-j},\,0) \text{ for } i\not\in\{\pm j\},\\ 
&X^*_{i,\,-i}(a)=X^*(e_i,\,0,\,a).
\end{align*}
\end{df}

\begin{lm}
\label{st-relations}
Steinberg relations {\rm R0}--{\rm R5} hold for $X^*_{ij}(a)$ and $X^*_{i,-i}(a)$.
\end{lm}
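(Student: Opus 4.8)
The plan is to read off each Steinberg relation from the defining relations P1--P3 together with the commutator formula of Lemma~\ref{r3-r5}, with R0, R1, R2 being the routine cases and R3, R4, R5 the substantial ones. Relation R0 is just P2: writing $X^*_{ij}(a)=X^*(e_i,\,e_{-j}(a\eps{-j}),\,0)$ and applying P2 gives $X^*(e_{-j},\,e_i(a\eps{-j}),\,0)$, which is $X^*_{-j,-i}(-a\eps i\eps j)$ once the signs are unwound. Relation R1 follows from P1: for a short root the form term vanishes because $\lan e_{-j},\,e_{-j}\ran=0$, and for a long root both middle arguments are $0$, so in either case P1 degenerates to additivity in the last slot.

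For R2 I would conjugate by means of P3. Since $X^*_{hk}(b)$ maps under $\phi$ to the ESD-transformation $T_{hk}(b)$ (respectively $T_{h,-h}(b)$ in the long-root case), P3 gives $^{X^*_{hk}(b)}X^*_{ij}(a)=X^*\big(T_{hk}(b)e_i,\,T_{hk}(b)(e_{-j}a\eps{-j}),\,0\big)$ and its evident long-root variants. The restrictions $h\notin\{j,-i\}$ and $k\notin\{i,-j\}$ are exactly what force $T_{hk}(b)$ (respectively $T_{h,-h}(b)$) to fix both $e_i$ and $e_{-j}$, as one sees directly from the defining formula of an ESD-transformation evaluated on these basis vectors. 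Hence the conjugate equals $X^*_{ij}(a)$ and the commutator is trivial.

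The core of the argument is R3, R4, R5. For R5 I would bring both factors to a common first argument: by P2 one has $X^*_{j,-i}(b)=X^*(e_j,\,e_ib\eps i,\,0)=X^*(e_i,\,e_jb\eps i,\,0)$, so that $X^*_{ij}(a)$ and $X^*_{j,-i}(b)$ are both of the form $X^*(e_i,\,\cdot\,,\,0)$. A direct calculation from P1 gives, for admissible $v,\,w$,
$$[X^*(e_i,\,v,\,0),\,X^*(e_i,\,w,\,0)]=X^*(e_i,\,0,\,2\lan v,\,w\ran),$$
and here $\lan e_{-j}a\eps{-j},\,e_jb\eps i\ran=ab\eps i$, so the commutator is $X^*(e_i,\,0,\,2ab\eps i)=X^*_{i,-i}(2ab\eps i)$, which is R5. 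For R3 I would instead use P2 to rewrite $X^*_{ij}(a)=X^*(e_{-j},\,e_ia\eps{-j},\,0)$, so that $[X^*_{ij}(a),\,X^*_{jk}(b)]$ takes the opposite-pair shape $[X^*(e_{-j},\,\cdot\,,\,0),\,X^*(e_j,\,\cdot\,,\,0)]$ matching Lemma~\ref{r3-r5} with its index specialised to $-j$; the choice $u=e_i$ with $v$ proportional to $e_{-k}$ produces $X^*(e_i,\,e_{-k}ab\eps{-k},\,0)=X^*_{ik}(ab)$. Finally R4 is a single instance of Lemma~\ref{r3-r5} applied to $[X^*_{-i,j}(b),\,X^*_{i,-i}(a)]$ with $u=e_{-j}$ and $v=0$, which returns $X^*_{-j,j}(ab^2)$ times a short-root factor; passing to the inverse commutator $[X^*_{i,-i}(a),\,X^*_{-i,j}(b)]$ then yields $X^*_{ij}(ab\eps i)X^*_{-j,j}(-ab^2)$.

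All the conceptual input is contained in P1--P3 and Lemma~\ref{r3-r5}; the real work is bookkeeping, and this is where I expect the main obstacle to lie. In each case one must check that every symbol $X^*(u,\,v,\,c)$ occurring is a legitimate generator --- that $u$ is a column of a symplectic elementary matrix, that $\lan u,\,v\ran=0$, and that the coordinate-vanishing hypotheses of Lemma~\ref{r3-r5} are met throughout the admissible index ranges --- and one must keep careful track of the sign identities $\eps{-i}=-\eps i$, since it is precisely cancellations such as $\eps{-j}\eps{-i}=\eps i\eps j$ and $\eps{-j}^2=1$ that bring each right-hand side into the normalised Steinberg form. The difficulty is thus not any single hard step but the uniform verification that these side conditions hold across all the index configurations permitted in R0--R5.
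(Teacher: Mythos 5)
Your proposal is correct and takes essentially the same route as the paper: R0 from P2, R1 from P1, R2 via P3-conjugation (the index restrictions forcing $T_{hk}(b)$ to fix both $e_i$ and $e_{-j}$), R3 and R4 from Lemma~\ref{r3-r5} (with R4 obtained first in factor-interchanged form and then inverted), and R5 by using P2 to bring both factors to the form $X^*(e_i,\,\cdot\,,0)$ and then computing the commutator from P1. All the sign bookkeeping and admissibility checks you flag do go through as you describe, so there is no gap.
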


\begin{proof}
Indeed, P2 implies R0 and P1 implies R1. To establish R1 one has to consider two cases, where $j\neq-i$ and where $j=-i$, respectively. To establish R2 one proceeds as follows. Firstly, one has to consider three cases, where $j\neq-i$, $k\neq-h$, where $j\neq-i$, $k=-h$, and finally, where $j=-i$, $k=-h$. In each case using P3 show that $X^*_{ij}(a)$ commutes with $X^*_{hk}(b)$. Relations R3 and R4 follow directly from Lemma~\ref{r3-r5}. More precisely, it is easier to obtain a version of R4, where the factors in the commutator are interchanged, rather than R4 itself. Finally, we proceed with R5. Using P2 and then P1 we get
\begin{multline*}
[X^*_{ij}(a),\,X^*_{j,-i}(b)]=[X^*(e_i,\,-e_{-j}a\eps j,\,0),\,X^*(e_i,\,-e_jb\eps{-i},\,0)]=\\=X^*(e_i,\,0,\,2ab\eps i).
\end{multline*}
\end{proof}

\begin{rk}
By Lemma~\ref{r3-r5} it follows that $$X^*(e_i,\,0,\,2a)=X^*_{i,-i}(2a)=[X^*_{ij}(a),\,X^*_{j,-i}(\eps i)]=X^*(e_i,\,e_ia,\,0).$$
\end{rk}

\begin{cl*}
There is a homomorphism $$\varrho\colon\St\!\Sp(2l,\,R)\rightarrow\St\!\Sp^*\!(2l,\,R)$$ sending $X_{ij}(a)$ to $X^*_{ij}(a)$. Obviously, $\varpi\varrho=1$, i.e. $\varrho$ is a splitting for $\varpi$.
\end{cl*}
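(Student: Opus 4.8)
The plan is to invoke the universal property of the presentation of the symplectic Steinberg group. By definition $\St\!\Sp(2l,\,R)$ is the group on generators $X_{ij}(a)$, $i\neq j$, $a\in R$, subject to the relations R0--R5, so to build a homomorphism out of it into any group it suffices to choose images of the generators and to check that these images satisfy R0--R5. All of this has in fact already been done: Lemma~\ref{st-relations} asserts precisely that the elements $X^*_{ij}(a)$ and $X^*_{i,-i}(a)$ of $\St\!\Sp^*\!(2l,\,R)$ satisfy R0--R5. I would therefore set $\varrho(X_{ij}(a))=X^*_{ij}(a)$ (and $\varrho(X_{i,-i}(a))=X^*_{i,-i}(a)$); the universal property then produces a homomorphism $\varrho\colon\St\!\Sp(2l,\,R)\rightarrow\St\!\Sp^*\!(2l,\,R)$, uniquely determined since the $X_{ij}(a)$ generate the source.

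It then remains to verify that $\varrho$ splits $\varpi$, i.e. that $\varpi\varrho=1$. Since both composites are homomorphisms and the $X_{ij}(a)$ generate $\St\!\Sp(2l,\,R)$, it is enough to check the equality on generators. Recalling that $\varpi$ carries $X^*(u,\,v,\,a)$ to $X(u,\,v,\,a)$, for a short-root generator I would compute
$$\varpi\varrho(X_{ij}(a))=\varpi\big(X^*(e_i,\,e_{-j}a\eps{-j},\,0)\big)=X(e_i,\,e_{-j}a\eps{-j},\,0).$$
By Lemma~\ref{x=y} this is $Y(e_i,\,e_{-j}a\eps{-j},\,0)$, which by Lemma~\ref{switch} equals $Y(e_{-j},\,e_ia\eps{-j},\,0)=Y(e_{-j},\,-e_ia\eps j,\,0)$ after using $\eps{-j}=-\eps j$; the first corollary to Lemma~\ref{y-decomposition} then identifies this with $X_{ij}(a)$. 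For the long-root generator the analogous chain reads $\varpi\varrho(X_{i,-i}(a))=X(e_i,\,0,\,a)=X_{i,-i}(a)$, the last step being the computation $X_{(j)}(e_i,\,0,\,a)=Y(e_i,\,0,\,a)=X_{i,-i}(a)$ already performed inside the proof of Lemma~\ref{generating}. Thus $\varpi\varrho$ fixes every generator and hence equals the identity.

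The whole substance of the corollary sits in Lemma~\ref{st-relations}: once the Steinberg relations are known for the starred generators, the existence of $\varrho$ is a purely formal consequence of the presentation, so I do not expect a genuine obstacle here. The only step demanding a little care will be the bookkeeping identity $\varpi(X^*_{ij}(a))=X_{ij}(a)$, which has to be unwound through the earlier definitions (Lemmas~\ref{x=y} and~\ref{switch}, and the corollary to Lemma~\ref{y-decomposition}) together with the sign convention $\eps{-j}=-\eps j$.
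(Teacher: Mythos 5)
Your proposal is correct and follows exactly the paper's route: the existence of $\varrho$ is the formal consequence of the defining presentation of $\St\!\Sp(2l,\,R)$ together with Lemma~\ref{st-relations}, and $\varpi\varrho=1$ is checked on generators. The only thing you add is an explicit unwinding (via Lemmas~\ref{x=y}, \ref{switch} and the corollary to Lemma~\ref{y-decomposition}) of the identification $X(e_i,\,e_{-j}a\eps{-j},\,0)=X_{ij}(a)$ and $X(e_i,\,0,\,a)=X_{i,-i}(a)$, which the paper treats as already established and dismisses with ``obviously''.
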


It remains only to verify that $\varrho\varpi=1$. From a technical viewpoint, it is much more convenient not to proceed directly, but to invoke the following fact (see~\cite{vdK,L2} for the proof).

\begin{lm}
Let $\pi\colon G\epi H$ be a splitting central extension with $G$ perfect. Then $\pi$ is in fact an isomorphism.
\end{lm}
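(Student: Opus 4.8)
The plan is to show that the kernel of $\pi$ is trivial. Write $K=\Ker\pi$; by hypothesis $K$ lies in the centre of $G$, so in particular $K$ is abelian. Let $s\colon H\to G$ be the given splitting, so that $\pi s=\mathrm{id}_H$.

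First I would record two elementary facts about $s(H)$. Since $\pi$ is surjective, every $g\in G$ satisfies $\pi\big(g\,s(\pi(g))\inv\big)=1$, so $g\,s(\pi(g))\inv\in K$ and hence $G=K\cdot s(H)$. Moreover $K\cap s(H)=1$: if $s(h)\in K$, then $h=\pi(s(h))=1$, whence $s(h)=1$. Because $K$ is central, $s(H)$ is in fact normal (conjugating $s(h')$ by $g=k\,s(h)$ gives $s(h h' h\inv)$, as the central factor $k$ cancels), so that $G$ is the internal direct product $K\times s(H)$.

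The crux is the behaviour of commutators under this decomposition. Writing arbitrary elements as $g_1=k_1 s(h_1)$ and $g_2=k_2 s(h_2)$ with $k_i\in K$, and recalling that the $k_i$ are central, one checks that they cancel in the commutator, so that $[g_1,\,g_2]=[s(h_1),\,s(h_2)]$. Therefore
$$[G,\,G]=[s(H),\,s(H)]\subseteq s(H).$$
Now I would invoke perfectness: $G=[G,\,G]\subseteq s(H)$, so $G=s(H)$ and consequently $K=K\cap G=K\cap s(H)=1$. Thus $\pi$ is injective, and being already surjective it is an isomorphism.

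The argument has no genuine obstacle; the one point worth emphasising is that centrality of $K$ is precisely what upgrades the bare splitting---which a priori yields only a semidirect decomposition---to a direct product, after which perfectness of $G$ forces the abelian central factor to vanish.
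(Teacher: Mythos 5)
Your proof is correct and complete: centrality of $K=\Ker\pi$ together with the splitting $s$ gives $G=K\cdot s(H)$ with $K\cap s(H)=1$, the commutator identity $[k_1s(h_1),\,k_2s(h_2)]=[s(h_1),\,s(h_2)]$ places $[G,\,G]$ inside the subgroup $s(H)$, and perfectness then forces $G=[G,\,G]\subseteq s(H)$, hence $K=1$. Note that the paper itself gives no argument for this lemma --- it defers to the cited references --- and yours is exactly the standard proof found there. One small simplification: you do not need the normality of $s(H)$ or the direct-product decomposition at all; the commutator cancellation alone already yields $[G,\,G]\subseteq s(H)$, after which perfectness finishes the argument.
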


It remains to show that van der Kallen group is perfect.

\begin{lm}
\label{vdk-unipotent-decomposition}
For $v\in V$ such that $v_{-i}=0$, set 
$$v_-=\sum_{i<0}e_iv_i\qquad\text{and similarly}\qquad v_+=\sum_{i>0}e_iv_i.$$ 
Then
\begin{multline*}
X^*(e_i,\,v,\,a)=X^*_{i,-i}(a+2v_i-v_i\eps i-\lan v_-,\,v_+\ran)\cdot\\ \cdot X^*_{-l,-i}(v_{-l}\eps i)\ldots X^*_{-1,-i}(v_{-1}\eps i)X^*_{1,-i}(v_1\eps i)\ldots X^*_{l,-i}(v_l\eps i).
\end{multline*}
\end{lm}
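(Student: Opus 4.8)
The plan is to imitate the proof of Lemma~\ref{unipotent-decomposition} (equivalently Lemma~\ref{y-decomposition}), simply replacing the ESD-identities of Lemma~\ref{esd-properties} by the defining relations P1 (additivity) and P2 (symmetry) of the van der Kallen group, together with the Remark after Lemma~\ref{st-relations}. As in the original argument I would assume $i>0$, the case $i<0$ being identical.

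First I would rewrite every factor on the right-hand side in terms of the basic generator $X^*(e_i,\,\cdot\,,\,\cdot\,)$. By the definition of $X^*_{j,-i}$ one has $X^*_{j,-i}(v_j\eps i)=X^*(e_j,\,e_iv_j,\,0)$, and since both $e_i$ and $e_j$ are columns of symplectic elementary matrices, P2 yields $X^*(e_j,\,e_iv_j,\,0)=X^*(e_i,\,e_jv_j,\,0)$ for each short-root index $j\notin\{\pm i\}$ (the index $j=-i$ being the omitted diagonal symbol, consistent with $v_{-i}=0$). The remaining factors are the two long-root generators $X^*_{i,-i}(c)=X^*(e_i,\,0,\,c)$, with $c=a+2v_i-v_i\eps i-\lan v_-,\,v_+\ran$, and, inside the positive part of the product, $X^*_{i,-i}(v_i\eps i)=X^*(e_i,\,0,\,v_i\eps i)$.

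The heart of the computation is then to collapse the whole right-hand side, now a product of factors all of the shape $X^*(e_i,\,w_k,\,d_k)$, by repeated use of P1. A one-line induction on P1 shows that such a product equals $X^*\big(e_i,\,\sum_k w_k,\,\sum_k d_k+\sum_{k<l}\lan w_k,\,w_l\ran\big)$. Here $\sum_k w_k=v-e_iv_i$ (the two long-root factors contribute nothing to the first slot, and $v_{-i}=0$), while the cross-terms $\sum_{k<l}\lan w_k,\,w_l\ran$ pair each $e_{-j}v_{-j}$ with the later factor $e_jv_j$ and so reproduce exactly $\lan v_-,\,v_+\ran$. In the scalar slot one finds $\sum_k d_k=c+v_i\eps i$, so the total scalar is $c+v_i\eps i+\lan v_-,\,v_+\ran=a+2v_i$; the $\pm\lan v_-,\,v_+\ran$ and $\pm v_i\eps i$ cancel, leaving the combined element $X^*(e_i,\,v-e_iv_i,\,a+2v_i)$.

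It remains to identify this with $X^*(e_i,\,v,\,a)$. For that I would invoke the Remark $X^*(e_i,\,e_iv_i,\,0)=X^*(e_i,\,0,\,2v_i)$ and one final application of P1: since $\lan v-e_iv_i,\,e_iv_i\ran=v_i\lan v,\,e_i\ran=0$ (because $v_{-i}=0$), both $X^*(e_i,\,v-e_iv_i,\,a)X^*(e_i,\,0,\,2v_i)=X^*(e_i,\,v-e_iv_i,\,a+2v_i)$ and $X^*(e_i,\,v-e_iv_i,\,a)X^*(e_i,\,e_iv_i,\,0)=X^*(e_i,\,v,\,a)$ hold, and the two left-hand sides coincide by the Remark. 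Hence $X^*(e_i,\,v-e_iv_i,\,a+2v_i)=X^*(e_i,\,v,\,a)$, as desired. The only genuine obstacle is the careful bookkeeping of the symplectic cross-terms generated by P1 together with the isolated $v_i$-coordinate; everything else is formal manipulation with P1, P2 and the Remark.
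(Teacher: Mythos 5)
Your proof is correct and takes essentially the same approach as the paper: the paper's own proof simply observes that the computation of Lemma~\ref{unipotent-decomposition} goes through almost verbatim once Lemma~\ref{esd-properties} is replaced by P1--P2 together with the Remark $X^*(e_i,\,0,\,2a)=X^*(e_i,\,e_ia,\,0)$, which is exactly the substitution you carry out (P2 to symmetrize the short-root factors, iterated P1 for additivity, the Remark for the diagonal term). The only difference is harmless bookkeeping: you collapse the entire product first and invoke the Remark at the very end to identify $X^*(e_i,\,v-e_iv_i,\,a+2v_i)$ with $X^*(e_i,\,v,\,a)$, whereas the paper's template cancels the $X^*_{i,-i}(\pm v_i\eps i)$ factors midway through.
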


\begin{proof}
The proof of Lemma~\ref{unipotent-decomposition} works in this situation almost verbatim. One only has to recall that $[X^*(e_i,\,0,\,b),\,X^*(e_i,\,\hat v,\,\hat a)]=1$ by P1 and that $X^*(e_i,\,0,\,2a)=X^*(e_i,\,e_ia,\,0)$.
\end{proof}

\begin{lm}
The group $\St\!\Sp^*\!(2l,\,R)$ is perfect.
\end{lm}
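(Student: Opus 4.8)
The plan is to show that every generator $X^*(u,\,v,\,a)$ of $\St\!\Sp^*(2l,\,R)$ lies in the commutator subgroup $[\St\!\Sp^*,\,\St\!\Sp^*]$. Since this subgroup is normal, it suffices to conjugate each generator to a product of the standard generators $X^*_{ij}(a)$, $X^*_{i,-i}(a)$ and then to verify that each of these standard generators is a product of commutators.

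First I would reduce to the standard generators. Since $u$ is a column of a symplectic elementary matrix, one can write $u=\psi e_k$ with $\psi=T_{i_1j_1}(a_1)\cdots T_{i_mj_m}(a_m)\in\Ep(2l,\,R)$, and set $h=X^*_{i_1j_1}(a_1)\cdots X^*_{i_mj_m}(a_m)$. As conjugation by $X^*_{ij}(a)=X^*(e_i,\,e_{-j}a\eps{-j},\,0)$ realises the transformation $T_{ij}(a)$ via P3, iterating P3 yields
$$
h\,X^*(e_k,\,\psi\inv v,\,a)\,h\inv=X^*(u,\,v,\,a),
$$
where $\langle e_k,\,\psi\inv v\rangle=\langle u,\,v\rangle=0$ forces $(\psi\inv v)_{-k}=0$, so Lemma~\ref{vdk-unipotent-decomposition} applies and $X^*(e_k,\,\psi\inv v,\,a)$ is a product of standard generators. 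Because $[\St\!\Sp^*,\,\St\!\Sp^*]$ is normal, it is now enough to show that every standard generator lies in it.

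For the short-root generators this is immediate from R3: choosing an intermediate index $h$ distinct from $\pm i,\,\pm j$ (which exists since $l\geq3$) one has $X^*_{ij}(a)=[X^*_{ih}(a),\,X^*_{hj}(1)]$, a commutator. The long-root generators $X^*_{i,-i}(a)$ are the only delicate point, since they cannot be written directly as commutators of short-root generators when $2$ is not invertible. Here I would use R4 with $b=1$, namely
$$
[X^*_{i,-i}(a),\,X^*_{-i,j}(1)]=X^*_{ij}(a\eps i)X^*_{-j,j}(-a),
$$
which rearranges to $X^*_{-j,j}(-a)=X^*_{ij}(a\eps i)\inv[X^*_{i,-i}(a),\,X^*_{-i,j}(1)]$. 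The second factor is a commutator, and the first is a short-root generator already shown to lie in the commutator subgroup; hence $X^*_{-j,j}(-a)\in[\St\!\Sp^*,\,\St\!\Sp^*]$. Letting $a$ and $j$ vary, with some admissible $i\neq\pm j$ chosen each time, exhausts all long-root generators.

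Combining these steps, every standard generator, and therefore every generator $X^*(u,\,v,\,a)$, lies in $[\St\!\Sp^*,\,\St\!\Sp^*]$, so $\St\!\Sp^*(2l,\,R)$ is perfect. The main obstacle is exactly the treatment of the long-root generators; the key idea is that R4 expresses one of them through a mixed commutator together with a short-root factor that is already known to be a commutator.
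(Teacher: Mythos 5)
Your proposal is correct and follows essentially the same route as the paper: express the long-root generators via R4 (after handling short-root ones via R3), reduce $X^*(e_i,\,v,\,a)$ to standard generators by Lemma~\ref{vdk-unipotent-decomposition}, and then use P3 together with the hypothesis that $u$ is a column of a symplectic elementary matrix to conjugate into the commutator subgroup. The only difference is cosmetic: you run the reduction first and the paper runs it last, and you spell out the details (the choice of $h$, the rearrangement of R4, the orthogonality giving $(\psi\inv v)_{-k}=0$) that the paper leaves implicit.
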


\begin{proof}
Firstly, observe that $X^*_{ij}(a)$ lie in $[\St\!\Sp^*\!(2l,\,R),\,\St\!\Sp^*\!(2l,\,R)]$ by relations R3 and R4. Thus, $X^*(e_i,\,v,\,a)$ also lie in the commutator subgroup by Lemma~\ref{vdk-unipotent-decomposition}. Now, P3 implies that any generator $X^*(u,\,v,\,a)$ lies in the commutator subgroup. Indeed, this is obvious since $u$ is a column of a symplectic elementary matrix.
\end{proof}

Now, it follows that $\St\!\Sp^*\!(2l,\,R)\cong\St\!\Sp(2l,\,R)$.

\bibliography{stsp}{}
\bibliographystyle{plain}

\end{document}